\DeclareMathOperator{\M}{\mathcal{M}}
\DeclareMathOperator{\N}{\mathcal{N}}
\DeclareMathOperator{\rank}{rank}
\DeclareMathOperator{\A}{\mathcal{A}}
\theoremstyle{plain}
\newtheorem{theorem}{Theorem}
\newtheorem{corollary}{Corollary}
\newtheorem{lemma}{Lemma}
\theoremstyle{definition}
\newtheorem{definition}{Definition}
\begin{document}
\title[Determinantal polynomials]
{Properties of determinantal polynomials of subspaces 
 of matrices over a finite field}

\author[R. Gow]{Rod Gow}
\address{School of Mathematics and Statistics\\
University College Dublin\\
 Ireland}
\email{rod.gow@ucd.ie}

%\keywords{matrix, rank, constant rank subspace, symmetric matrix, common isotropic points.}
%\subjclass{15A03, 15A33}

\begin{abstract} 
Let $K$ be a field and let $M_n(K)$ denote the space of $n\times n$ matrices with entries in $K$. Let $\M$ be a subspace of $M_n(K)$
of dimension $d$ with the property that there are elements in $\M$ with non-zero determinant. Given a basis of $\M$, we define the determinantal polynomial $P_{\M}$ of $\M$ with respect to the basis. It is a homogeneous polynomial of degree $n$ in $d$ indeterminates
that gives the determinant of any element of $\M$ by evaluation in $K^d$. 

This paper investigates the interrelationship of $\M$ and $P_{\M}$. We largely confine ourselves to finite fields $K$, where we can obtain
useful information by applying the Lang-Weil theorem on the number of zeros of absolutely irreducible polynomials. A combination
of Chevalley's theorem on the zeros of polynomials in several variables and the Lang-Weil theorem leads to theorems about the characteristic
polynomials of elements of $\M$ when $n$  is a prime. We also draw attention to cases when the elements of $\M$ with non-zero determinant
are a proper subspace, and provide non-trivial examples of this  phenomenon.
\end{abstract}
\maketitle
\section{Introduction}

\noindent  Let $K$ be a field and let $M_n(K)$ denote the $K$-vector space of $n\times n$ matrices with entries in $K$. We are interested in studying subspaces of $M_n(K)$, especially those which contain elements with non-zero determinant. With this aim in mind, we make the following definition. 

\begin{definition}
Let $\M$ be a $d$-dimensional subspace of $M_n(K)$, where $d\geq 1$. Let $M_1$, \dots, $M_d$ be a basis of $\M$ and let $x_1$, \dots, $x_d$ be independent indeterminates over $K$.
We set
\[
P_{\M}(x_1, \ldots, x_d)=\det (x_1M_1+\cdots +x_dM_d)
\]
and we call $P_{\M}$ the \emph{determinantal polynomial} of $\M$ (with respect to the given
basis).
\end{definition}

Clearly, different choices of basis of $\M$ will lead to different determinantal polynomials, and so we cannot speak of a unique such polynomial. However, suppose that 
$B_1$, \dots, $B_d$ is another basis of $\M$ and we set
\[
Q_{\M}(y_1, \ldots, y_d)=\det(y_1B_1+ \cdots +y_dB_d).
\]
Then, since each basis is a $K$-linear combination of the other, $P_{\M}$ and $Q_{\M}$ are obtained from each other by a linear change of variables. Thus, properties such reducibility
or irreducibility of the determinantal polynomials are uniquely determined independently of the basis, and it is properties of this kind which concern us in this paper. 

We also briefly mention the concept of \emph{equivalence} of subspaces. Let $\M$ be a $d$-dimensional subspace of $M_n(K)$ and let $C$ and $D$ be invertible elements of $M_n(K)$.
The set of all elements of the form $CMD$, as $M$ runs over $\M$, is another subspace, $\N$, say, of $M_n(K)$ of dimension $d$ and we say that it is equivalent to $\M$. If
$M_1$, \dots, $M_d$ is a basis of $\M$, then $B_1=CM_1D$, \dots, $B_d=CM_dD$ is a basis of $\N$, and the multiplicative property of determinants shows that the determinantal polynomial
 of $\N$ with respect to this basis is a non-zero scalar multiple of that of $\M$. Thus the determinantal polynomials of equivalent subspaces are either both reducible with 
 irreducible factors of the same degrees or both irreducible or both absolutely irreducible.
 
 The notion of equivalent subspaces is important when one studies such subjects as semifields, which may be realized as subspaces of matrices where all non-zero elements have non-zero determinant. 
 
 We mention another simple property of determinantal polynomials which we shall use, frequently without comment. Suppose that $\M_1$ is an $r$-dimensional subspace of $\M$
and let $M_1$, \dots, $M_r$ be a basis of $\M_1$. We extend this basis to a basis
$M_1$, \dots, $M_d$ of $\M$ and define the determinantal polynomial of $\M$ with respect to this basis. Then it is easy to see that
\[
P_{\M}(x_1, \ldots, x_r, 0, \ldots, 0)=P_{\M_1}(x_1, \ldots, x_r).
\]
Thus, for example, it follows that if the determinantal polynomial of $\M_1$ is irreducible,
that of $\M$ is also irreducible. 

We must draw attention to another point that will not concern us much but is nonetheless of interest. Let $\M$ be a $d$-dimensional subspace of $M_n(K)$. It is obvious that
if $\M$ contains elements of non-zero determinant, its determinantal polynomial is non-zero.
The converse is not true, but examples of this unusual behaviour only occur when $K$ is finite and $|K|$ is small compared with $n$, as we now show.

\begin{theorem} \label{field_size}
Let $\M$ be a subspace of $M_n(\mathbb{F}_q)$ of dimension $d>0$, all of whose elements have zero determinant. Suppose that $P_{\M}$ is not the zero polynomial. Then $q<n$.
\end{theorem}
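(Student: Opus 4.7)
The plan is to exploit the standard fact that polynomial identities on $\mathbb{F}_q^d$ are determined only modulo the ideal $(x_1^q-x_1,\ldots,x_d^q-x_d)$. Concretely, every $f\in\mathbb{F}_q[x_1,\ldots,x_d]$ has a canonical \emph{reduced form} $\tilde f$, obtained by repeatedly replacing $x_i^q$ by $x_i$ until each variable occurs to degree at most $q-1$; two reduced polynomials determine the same function on $\mathbb{F}_q^d$ if and only if they are equal, so a polynomial vanishes identically on $\mathbb{F}_q^d$ exactly when its reduced form is $0$. My strategy is to show that if $q\geq n$, the reduced form of $P_{\M}$ cannot vanish, yielding the desired strict inequality.

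The easy case is $q\geq n+1$. Every monomial of the degree-$n$ polynomial $P_{\M}$ has each variable exponent at most $n\leq q-1$, so $P_{\M}$ is already in reduced form and $\tilde P_{\M}=P_{\M}\neq 0$. This contradicts the assumed vanishing of $P_{\M}$ on $\mathbb{F}_q^d$.

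The boundary case $q=n$ is the main (mild) obstacle, because a crude counting estimate such as Schwartz--Zippel yields only $q\leq n$. Here I would argue as follows. A monomial $x_1^{a_1}\cdots x_d^{a_d}$ of total degree $n$ fails to be reduced exactly when some $a_i\geq q=n$, which forces the monomial to be a pure power $x_i^n$. Reduction sends $x_i^n$ to $x_i$, while the remaining monomials (whose exponents are all strictly less than $q$) are unchanged. Thus
\[
\tilde P_{\M}=R(x_1,\ldots,x_d)+L(x_1,\ldots,x_d),
\]
where $R$ collects those terms of $P_{\M}$ in which no exponent attains $n$ (a homogeneous form of degree $n$, already reduced) and $L=\sum_i c_i x_i$ gathers the coefficients $c_i$ of the pure powers $x_i^n$ in $P_{\M}$. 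Since $n=q\geq 2$, the summands $R$ and $L$ lie in distinct graded components of $\mathbb{F}_q[x_1,\ldots,x_d]$, so $\tilde P_{\M}=0$ forces both $R=0$ and $L=0$, i.e.\ every coefficient of $P_{\M}$ vanishes, contradicting $P_{\M}\neq 0$.

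Combining both cases gives $q<n$, as required. The only delicate point is the separation of graded pieces in the boundary case; once one observes that pure $n$-th powers are the unique obstruction to being reduced when $q=n$, and that reducing them produces linear terms which cannot interact with the remaining degree-$n$ terms, the argument closes cleanly.
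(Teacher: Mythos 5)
Your proof is correct, and it takes a genuinely different route from the paper. The paper's argument is a one-line count: it cites the homogeneous refinement of Ore's bound (Lidl--Niederreiter, Theorem 6.15), by which a nonzero homogeneous polynomial of degree $n$ in $d$ variables has at most $n(q^{d-1}-1)$ non-trivial zeros in $\mathbb{F}_q^d$; since $P_{\M}$ vanishes everywhere, $q^d-1\leq n(q^{d-1}-1)$, which forces $q<n$. You instead work with the ideal $(x_1^q-x_1,\ldots,x_d^q-x_d)$ and the uniqueness of reduced forms, which makes the result self-contained at the cost of a case split: the case $q\geq n+1$ is immediate because $P_{\M}$ is already reduced, and the boundary case $q=n$ is handled by observing that homogeneity forces the only non-reduced monomials to be the pure powers $x_i^n$, whose reductions land in the degree-one graded piece and hence cannot cancel against the untouched degree-$n$ part. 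Both arguments use homogeneity in an essential way --- the paper through the sharpened zero count for forms (the crude bound $nq^{d-1}$ would only give $q\leq n$, as you note), you through the graded-component separation --- and both are complete; yours trades the citation of a counting theorem for an elementary but slightly longer structural argument.
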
 

\begin{proof}
We note that $P_{\M}$ is homogeneous of degree $n$. It follows from Theorem 6.15 of \cite{LN}
that $P_{\M}$ has at most $n(q^{d-1}-1)$ non-trivial zeros on $\mathbb{F}_q^d$. Since we are assuming that all elements of $\M$ have zero determinant, $P_{\M}$ vanishes
on $\mathbb{F}_q^d$. Thus we have the inequality
\[
q^{d}-1\leq n(q^{d-1}-1)
\]
and this implies that $q<n$.
\end{proof}

\noindent{\bf Example 1} Let $\M$ be the two-dimensional subspace of $M_3(\mathbb{F}_2)$ with basis vectors
\[
\left(
\begin{array}
{ccc}
        1&0&0\\
        0&1&0\\
        0&0&0   
\end{array}
\right), \qquad \left(
\begin{array}
{ccc}
        0&0&0\\
        0&1&0\\
        0&0&1 
\end{array}
\right).
\]
Then we find that $P_{\M}(x_1,x_2)=x_1x_2(x_1+x_2)$ and this polynomial is zero on
$\mathbb{F}_2^2$.
\bigskip

\noindent{\bf Example 2} Let $\N$ be the two-dimensional subspace of $M_4(\mathbb{F}_3)$ with basis vectors
\[
\left(
\begin{array}
{cccc}
        1&0&0&0\\
        0&1&0&0\\
        0&0&1&0\\
        0&0&0&0  
\end{array}
\right), \quad \left(
\begin{array}
{cccc}
        2&0&0&0\\
        0&1&0&0\\
        0&0&0&0\\
        0&0&0&1 
\end{array}
\right).
\]
Then we find that $P_{\N}(x_1,x_2)=x_1x_2(x_1+x_2)(x_1-x_2)$ and this polynomial vanishes on
$\mathbb{F}_3^2$.

We note the following property of the subspaces considered in the examples above.
Let $\M$ be any subspace of $M_n(\mathbb{F}_q)$ such that all elements of $\M$ have determinant 0 and yet $P_{\M}$ is non-zero. Then any subspace $\M'$ of $M_n(\mathbb{F}_q)$
that contains $\M$ and whose elements all have determinant 0 also has non-zero determinantal polynomial. This enables us to construct subspaces of larger dimension with the vanishing determinant/non-vanishing determinantal polynomial property. For example, we can embed
the subspace of Example 2 into an 8-dimensional subspace with non-zero determinantal polynomial, whose elements all have zero
determinant.

We begin a more systematic investigation of determinantal polynomials and their use in investigating such things as characteristic polynomials. We make good use of the following lemma, for which we claim no originality. As we could not find a proof in a textbook, we provide the following argument.

\begin{lemma} \label{homogeneous}
Let $f\in K[x_1, \ldots,x_d]$ be a non-constant homogeneous polynomial of degree $n\geq 1$.
Suppose that $f=gh$ is a factorization of $f$ into non-constant polynomials $g$ and $h$ in
$K[x_1, \ldots,x_d]$. Then $g$ and $h$ are both homogeneous.
\end{lemma}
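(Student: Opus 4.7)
The plan is to decompose $g$ and $h$ into their homogeneous components and argue that only one component on each side can be non-zero.

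First I would write $g = g_a + g_{a+1} + \cdots + g_b$, where each $g_i \in K[x_1,\ldots,x_d]$ is homogeneous of degree $i$ (or zero), and $a \leq b$ are chosen so that $g_a \neq 0$ and $g_b \neq 0$. Similarly I would write $h = h_c + h_{c+1} + \cdots + h_e$ with $h_c \neq 0$ and $h_e \neq 0$. The goal is to show $a=b$ and $c=e$.

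Next I would examine the extreme homogeneous components of the product $gh$. The component of $gh$ of lowest degree is $g_a h_c$, which is homogeneous of degree $a+c$; the component of highest degree is $g_b h_e$, which is homogeneous of degree $b+e$. Because $K[x_1,\ldots,x_d]$ is an integral domain, both $g_a h_c$ and $g_b h_e$ are non-zero. Since $f = gh$ is homogeneous of degree $n$, every homogeneous component of $gh$ outside degree $n$ must vanish; in particular the lowest and highest non-zero components both sit in degree $n$, forcing $a + c = n = b + e$.

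Combined with the inequalities $a \leq b$ and $c \leq e$, the equality $a+c = b+e$ forces $a = b$ and $c = e$. Hence $g = g_a$ and $h = h_c$ are both homogeneous, of degrees $a$ and $c$ respectively, with $a + c = n$.

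The argument is essentially a one-step calculation; the only subtle point is to notice that the lowest-degree and highest-degree homogeneous parts of a product of two non-zero polynomials over a domain are themselves non-zero, so I would make sure to invoke the fact that $K[x_1,\ldots,x_d]$ is an integral domain explicitly. No other obstacle is expected.
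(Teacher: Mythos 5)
Your proof is correct and takes essentially the same approach as the paper: decompose $g$ and $h$ into homogeneous components, observe that the extreme (lowest and highest degree) components of the product are non-zero because $K[x_1,\ldots,x_d]$ is an integral domain, and conclude that both must lie in degree $n$. The only cosmetic difference is that you treat the top and bottom components symmetrically, whereas the paper gets the top-degree identity directly from $\deg g + \deg h = n$ and then argues only about the minimal components.
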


\begin{proof}
Let $r$ be the degree of $g$ and $s$ the degree of $h$, where $r+s=n$. Given a non-negative
integer $t$, let $g^{(t)}$ be defined as the sum of all monomials of degree $t$ that occur
in $g$ with non-zero coefficient (each monomial being multiplied by its coefficient). Thus
\[
g^{(t)}=\sum \lambda(a_1, \ldots, a_d)x_1^{a_1}\cdots x_d^{a_d}, \quad a_1+\cdots+a_d=t.
\]
Clearly, $g^{(t)}$ is homogeneous of degree $t$ and $g$ is the sum of its constituent
homogeneous parts $g^{(t)}$. Let the homogeneous polynomials $h^{(t)}$ be similarly defined for $h$.

We note that a product $g^{(t)}h^{(u)}$ is homogeneous of degree $t+u$ and $f$ is the sum
of such products. Thus, comparing degrees, $f= g^{(r)}h^{(s)}$. We want to show that
$g=g^{(r)}$ and $h=h^{(s)}$, so that $g$ and $h$ are both homogeneous. To this end,
let $t_1=\min t $, $g^{(t)}\neq 0$ and let $u_1=\min u $, $h^{(u)}\neq 0$. The product
$g^{(t_1)}h^{(u_1)}\neq 0$ and it contributes monomials of smallest possible degree $t_1+u_1$ with non-zero coefficient to the product $f= g^{(r)}h^{(s)}$. Unless 
$t_1+u_1=r+s=n$, $f$ cannot be homogeneous of degree $n$. Thus $t_1=r$ and $u_1=s$ and this implies that $g$ and $h$ are both homogeneous.

\end{proof}

\begin{corollary} \label{irreducible}
Let  $f$ be a non-constant homogeneous polynomial in $\mathbb{F}_q[x_1, \ldots, x_d]$. Suppose that $2d\geq 1+\deg f$ and $f$ has no non-trivial zeros in
$\mathbb{F}_q^d$. Then $f$ is an irreducible polynomial.
\end{corollary}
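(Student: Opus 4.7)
The plan is to argue by contradiction. Suppose $f = gh$ with $g$ and $h$ non-constant in $\mathbb{F}_q[x_1, \ldots, x_d]$. By Lemma~\ref{homogeneous}, both $g$ and $h$ are homogeneous, say of degrees $r$ and $s$ with $r, s \geq 1$ and $r + s = \deg f$. After swapping the roles of $g$ and $h$ if necessary, I may assume $r \leq s$. Then
\[
2r \leq r + s = \deg f \leq 2d - 1,
\]
so $r \leq d - 1$, and in particular $r < d$.

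Next I would invoke Chevalley's theorem: any polynomial in $d$ variables over $\mathbb{F}_q$ of degree strictly less than $d$ has a number of zeros in $\mathbb{F}_q^d$ that is divisible by the characteristic $p$ of $\mathbb{F}_q$. Applied to $g$, whose degree $r$ is less than $d$, this yields $|V(g)| \equiv 0 \pmod{p}$. Since $g$ is homogeneous of positive degree, the origin lies in $V(g)$, so $|V(g)|$ is a positive multiple of $p$ and in particular $|V(g)| \geq p \geq 2$. Therefore $g$ has a non-trivial zero in $\mathbb{F}_q^d$, which is also a non-trivial zero of $f = gh$, contradicting the hypothesis on $f$.

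There is no real obstacle; the argument is a clean coupling of the preceding homogeneity lemma with Chevalley's theorem. The only point worth highlighting is that the numerical hypothesis $2d \geq 1 + \deg f$ is chosen precisely so that the smaller of the two factor degrees is forced to be strictly less than $d$, which is exactly the input required by Chevalley's theorem.
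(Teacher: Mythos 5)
Your proof is correct and follows essentially the same route as the paper: factor $f=gh$, use Lemma~\ref{homogeneous} to get homogeneity, observe that the hypothesis $2d\geq 1+\deg f$ forces one factor to have degree less than $d$, and apply Chevalley's theorem to produce a non-trivial zero of that factor (the paper cites Chevalley directly, while you pass through the Chevalley--Warning divisibility statement, but the content is identical).
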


\begin{proof}
Suppose $f$ is reducible and we have a non-trivial factorization $f=gh$ into a product of polynomials $g$ and $h$. Our assumption that $2d\geq 1+\deg f$ implies that at least one of the factors has degree less than $d$. For definiteness, we may then assume that $\deg g<d$. 

Now $g$ is homogeneous by Lemma \ref{homogeneous} and it thus has the trivial zero
$(0, \ldots, 0)$. Since $\deg g<d$, Chevalley's theorem, \cite{LN}, Theorem 6.6, implies that $g$ has a non-trivial zero in $\mathbb{F}_q^d$. But then $f$ also has a non-trivial zero and this contradicts our hypothesis. Thus $f$ is irreducible.
\end{proof}

We note that Chevalley's theorem also implies that $d\leq \deg f$ is a necessary condition
for a polynomial such as $f$ to exist. Furthermore, it is easy to construct examples
of reducible polynomials $f$ in $\mathbb{F}_q[x_1, \ldots, x_d]$ with no non-trivial zeros in $\mathbb{F}_q^d$ if we assume instead that $d\leq \deg f\leq 2d$. 

Throughout this paper, we will employ the following notation. Given a non-zero subspace $\M$ of matrices, $\M^\times$ will denote the subset
of non-zero elements of $\M$.

\begin{corollary} \label{subspace_of_invertibles}
Let $\M$ be a subspace of $M_n(\mathbb{F}_q)$ of dimension $d$. Suppose that each element
of $\M^\times$ has non-zero determinant and $2d>n$. Then the determinantal polynomial $P_{\M}$ is irreducible.
\end{corollary}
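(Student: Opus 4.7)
The plan is to apply Corollary \ref{irreducible} directly with $f = P_{\M}$, so the task reduces to verifying its two hypotheses in this matrix setting.

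First I would record the obvious structural facts about $P_{\M}$: it is homogeneous of degree $n$ in the $d$ variables $x_1,\ldots,x_d$ by definition, and it is nonzero since by hypothesis $\M$ contains matrices of non-zero determinant (indeed every element of $\M^{\times}$ is such). In particular $\deg P_{\M} = n \geq 1$, so $P_{\M}$ is non-constant. The numerical hypothesis $2d > n$ rewrites as $2d \geq n+1 = 1 + \deg P_{\M}$, which is the first requirement in Corollary \ref{irreducible}.

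Next I would verify that $P_{\M}$ has no non-trivial zero in $\mathbb{F}_q^d$. Fix the basis $M_1, \ldots, M_d$ of $\M$ used to define $P_{\M}$. Given a non-trivial zero $(\alpha_1,\ldots,\alpha_d) \in \mathbb{F}_q^d$, the matrix $M = \alpha_1 M_1 + \cdots + \alpha_d M_d$ is a nonzero element of $\M$ (the $M_i$ are linearly independent) and satisfies $\det M = P_{\M}(\alpha_1,\ldots,\alpha_d) = 0$, contradicting the hypothesis that every element of $\M^{\times}$ has non-zero determinant. Hence the second hypothesis of Corollary \ref{irreducible} is satisfied, and the corollary gives the irreducibility of $P_{\M}$.

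There is no serious obstacle: all the work is already carried out in Lemma \ref{homogeneous} and Corollary \ref{irreducible}, and this statement is essentially the specialization of that corollary to determinantal polynomials, using the translation between non-trivial zeros of $P_{\M}$ and singular elements of $\M^{\times}$.
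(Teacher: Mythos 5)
Your proposal is correct and follows exactly the paper's own route: the paper's proof is a one-line appeal to Corollary \ref{irreducible}, noting that $P_{\M}$ has no non-trivial zeros in $\mathbb{F}_q^d$, and you have simply spelled out the verification of its hypotheses (homogeneity, degree $n$, and the translation between non-trivial zeros and singular non-zero elements of $\M$). No issues.
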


\begin{proof}
This follows from Corollary \ref{irreducible}, since $P_{\M}$ has no non-trivial zeros
in $\mathbb{F}_q^d$.
\end{proof}

\begin{corollary} \label{contains_subspace_of_invertibles}
Let $\M$ be a subspace of $M_n(\mathbb{F}_q)$. Suppose that $\M$ contains a subspace $\N$
with the property that each element of $\N^\times$ has non-zero determinant and $2\dim \N>n$. Then $P_{\M}$ is irreducible.
\end{corollary}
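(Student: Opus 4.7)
My plan is to reduce this statement to Corollary \ref{subspace_of_invertibles} via the observation stated earlier in the paper, namely that the determinantal polynomial of a subspace restricts to that of any subspace (when the corresponding extra basis variables are set to zero). The irreducibility of $P_{\M}$ then propagates upward from $P_{\N}$.

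More concretely, I first apply Corollary \ref{subspace_of_invertibles} to $\N$: since every non-zero element of $\N$ has non-zero determinant and $2\dim\N>n$, we conclude that $P_{\N}$ is irreducible in $\mathbb{F}_q[x_1,\ldots,x_r]$, where $r=\dim\N$. Next, I pick a basis $M_1,\ldots,M_r$ of $\N$ and extend it to a basis $M_1,\ldots,M_d$ of $\M$. Using this distinguished basis to define $P_{\M}$, the substitution identity recorded in the introduction gives
\[
P_{\M}(x_1,\ldots,x_r,0,\ldots,0)=P_{\N}(x_1,\ldots,x_r).
\]

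To conclude, I argue by contradiction: suppose $P_{\M}=gh$ with both $g$ and $h$ non-constant. Since $P_{\M}$ is homogeneous of degree $n$, Lemma \ref{homogeneous} forces $g$ and $h$ to be homogeneous of positive degree. Restricting to $x_{r+1}=\cdots=x_d=0$ produces a factorization of $P_{\N}$ whose factors are again homogeneous (and either of the same positive degree or identically zero); neither factor can be zero since $P_{\N}\neq 0$, so each restriction has positive degree and hence is non-constant. This contradicts the irreducibility of $P_{\N}$ obtained in the first step. Therefore $P_{\M}$ is irreducible.

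I do not expect a genuine obstacle here: the statement is essentially a packaging of Corollary \ref{subspace_of_invertibles} together with the substitution principle already proved, and the only subtle point is that one must invoke Lemma \ref{homogeneous} to ensure that a hypothetical factorization of $P_{\M}$ restricts non-trivially after zeroing out the extra coordinates. This is precisely the irreducibility-propagation remark made immediately after the substitution identity in the introduction, so the proof can be stated very briefly.
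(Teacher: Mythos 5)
Your proof is correct and follows exactly the route the paper intends: the paper states this corollary without proof, relying on Corollary \ref{subspace_of_invertibles} applied to $\N$ together with the restriction identity $P_{\M}(x_1,\ldots,x_r,0,\ldots,0)=P_{\N}(x_1,\ldots,x_r)$ noted in the introduction. Your use of Lemma \ref{homogeneous} to verify that a hypothetical factorization of $P_{\M}$ restricts to a genuine (non-constant) factorization of $P_{\N}$ is precisely the detail the paper leaves implicit.
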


\section{The Lang-Weil theorem and its consequences}

\noindent Most of the power of working with the determinantal polynomial $P_{\M}$ of a subspace $\M$ of $M_n(\mathbb{F}_q)$ resides in an appeal to the Lang-Weil theorem, \cite{LW}. Typically, the theorem is used in two ways. It can show that $P_{\M}$ is not absolutely irreducible, although
we may know that the polynomial is irreducible, for example, by an application of  Chevalley's theorem on zeros of polynomials, \cite{LN}, Theorem 6.6. We show for instance that in some cases this enables us to deduce that the elements of $\M$ of determinant zero are a subspace. Alternatively,
knowing that $P_{\M}$ is absolutely irreducible, we obtain a good estimate of the number of elements of $\M$ that have determinant equal to zero.
In both cases, we need to know that $q$ is sufficiently large compared with $n$.

Our purpose in this section is to give an estimate of how large $q$ needs to be in order to be sure that the Lang-Weil theorem applies. 
This analysis is based on an effective form of the Lang-Weil theorem due to Cafure and Matera, \cite{CM}. We will not attempt to obtain an optimal
estimate for the size of $q$, based on currently available effective results, not least as in most cases we are really only interested
in knowing that some theorem is true for all sufficiently large $q$. Of course, it would be of interest to know how much contrary behaviour
can arise
for smaller values of $q$, something that has not been much investigated as far as we are aware, except perhaps in the context of finite semifields.

\begin{theorem} \label{Lang_Weil}
Let $\M$ be a subspace of $M_n(\mathbb{F}_q)$ of dimension $d>0$. Suppose that $P_{\M}$ is absolutely irreducible. Let $N$ be the number
of elements of $\M$ of determinant zero. Then if $q>n^6$ and $n\geq 4$, we have
\[
q^{d-1}\leq 4N\leq 7q^{d-1}.
\]
\end{theorem}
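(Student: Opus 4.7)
The plan is to interpret $N$ as the number of $\mathbb{F}_q$-rational points on the affine hypersurface $V \subset \mathbb{A}^d$ defined by $P_{\M}$, and then apply the effective Lang--Weil estimate of Cafure and Matera \cite{CM}. By construction $V$ has dimension $d-1$ and degree $n$, and the hypothesis that $P_{\M}$ is absolutely irreducible is precisely the statement that $V$ is an absolutely irreducible $\mathbb{F}_q$-variety. Since the elements of $\M$ of determinant zero correspond bijectively to $\mathbb{F}_q$-points of $V$ via the coordinate map $(a_1,\dots,a_d)\mapsto a_1 M_1+\cdots +a_d M_d$, we have $N=|V(\mathbb{F}_q)|$.

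I would then invoke the Cafure--Matera form of Lang--Weil, which for an absolutely irreducible affine variety of dimension $r$ and degree $\delta$ gives
\[
\bigl||V(\mathbb{F}_q)| - q^{r}\bigr| \leq (\delta-1)(\delta-2)\, q^{r-1/2} + 5\delta^{13/3}\, q^{r-1},
\]
provided $q$ exceeds a modest polynomial in $\delta$ (of order $2(r+1)\delta^{2}$). In our setting $r=d-1$ and $\delta=n$, and the hypothesis on $q$ is automatic, since $d\leq n^{2}$ gives $2dn^{2}\leq 2n^{4}<n^{6}<q$ for $n\geq 4$.

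Dividing the Cafure--Matera inequality by $q^{d-1}$ and using $q>n^{6}$, the two error terms are bounded by $(n-1)(n-2)/q^{1/2}<1/n$ and $5n^{13/3}/q<5/n^{5/3}$ respectively, so
\[
\left|\frac{N}{q^{d-1}}-1\right| < \frac{1}{n}+\frac{5}{n^{5/3}}.
\]
It remains to check that the right-hand side is strictly less than $3/4$ for every $n\geq 4$: multiplying by $4$ then yields $|4N/q^{d-1}-4|<3$, hence $q^{d-1}\leq 4N\leq 7q^{d-1}$.

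The main (indeed only) delicate point is this final numerical verification in the borderline case $n=4$, where $1/4+5/4^{5/3}\approx 0.746$, barely below $3/4$. The function $1/n+5/n^{5/3}$ is decreasing in $n$, so the bound comfortably holds for all $n\geq 5$; the exponent $6$ in the hypothesis $q>n^{6}$ is calibrated precisely so that the estimate goes through at $n=4$ with the clean constants $1$ and $7$. I expect the rest of the argument to be routine once Cafure--Matera is quoted in the correct form.
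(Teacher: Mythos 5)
Your proposal is correct and follows essentially the same route as the paper: interpret $N$ as the number of $\mathbb{F}_q$-points of the degree-$n$ hypersurface cut out by $P_{\M}$, apply the Cafure--Matera form of Lang--Weil, divide by $q^{d-1}$, and use $q>n^6$ to bound the error by $n^{-1}+5n^{-5/3}<3/4$ for $n\geq 4$, with the borderline case $n=4$ checked explicitly. The only (harmless) addition is your verification that $q>n^6$ also satisfies the applicability threshold of the Cafure--Matera theorem, a point the paper leaves implicit.
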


\begin{proof}
Clearly, $N$ is the number of zeros of $P_{\M}$ in $\mathbb{F}_q^d$. Thus, since $P_{\M}$ is absolutely irreducible, and is homogeneous
of degree $n$, the Lang-Weil theorem, \cite{CM}, implies that
\[
|N-q^{d-1}|\leq (n-1)(n-2)q^{d-3/2}+5n^{13/3}q^{d-2}.
\]
We divide by $q^{d-1}$ to obtain
\[
|(N-q^{d-1})/q^{d-1}|\leq (n-1)(n-2)q^{-1/2}+5n^{13/3}q^{-1}.
\]

We now apply the hypothesis that $q>n^6$. Then elementary inequalities imply that
\[
|(N-q^{d-1})/q^{d-1}|\leq n^{-1}+5n^{-5/3}.
\]
We claim that 
\[
10\times 4^{-5/3}<1.
\]
This follows since the cube of left hand side is $1000/1024<1$. Thus, if $n\geq 4$,
\[
|(N-q^{d-1})/q^{d-1}|\leq n^{-1}+5n^{-5/3}<1/4+1/2=3/4.
\]
The desired inequality is an obvious consequence of this estimate.

\end{proof}

We could improve this type of estimate to allow $q>n^5$ if $n$ is a little larger, but $q>n^4$ is inaccessible with these tools.

The inequality concerning $N$ just obtained can be written in compact form $N=O(q^{d-1})$, since in general we are only interested in the order of magnitude of $N$.

Corollary \ref{subspace_of_invertibles} admits a more precise statement in the light of this inequality.

\begin{corollary} \label{irreducible_but_not_absolutely}
Let $\M$ be a subspace of $M_n(\mathbb{F}_q)$ of dimension $d$. Suppose that $n\geq 4$ and each element
of $\M^\times$ has non-zero determinant, and $2d>n$. Then the determinantal polynomial $P_{\M}$ is irreducible but if $q>n^6$, it is not absolutely irreducible.
\end{corollary}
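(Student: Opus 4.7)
The plan is that the irreducibility assertion is already a direct restatement of Corollary \ref{subspace_of_invertibles}, so the only new content is the failure of absolute irreducibility under the stronger hypothesis $q > n^6$. I would prove this by contradiction, exhibiting a clash between the Lang--Weil lower bound of Theorem \ref{Lang_Weil} and the fact that $\M$ contains only one singular element.

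More precisely, I would first observe that the hypothesis $2d > n$ together with $n \geq 4$ forces $d \geq 3$, so in particular $d - 1 \geq 2$. Next I would assume, for contradiction, that $P_{\M}$ is absolutely irreducible. Since $q > n^6$ and $n \geq 4$, Theorem \ref{Lang_Weil} applies and yields
\[
N \geq q^{d-1}/4,
\]
where $N$ is the number of elements of $\M$ of determinant zero.

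On the other hand, the hypothesis that every element of $\M^\times$ has non-zero determinant means that the only singular matrix in $\M$ is the zero matrix, so $N = 1$. Combining these two facts gives $q^{d-1} \leq 4$, which is absurd because $q > n^6 \geq 4^6$ and $d - 1 \geq 1$. This contradiction forces $P_{\M}$ to fail to be absolutely irreducible.

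I do not anticipate any real obstacle: the whole argument is a one-line contradiction, with the only bookkeeping being the verification that the hypotheses of Theorem \ref{Lang_Weil} are satisfied (namely $n \geq 4$ and $q > n^6$, both of which are assumed) and that $d \geq 2$ so that the lower bound $q^{d-1}/4$ genuinely exceeds $1$.
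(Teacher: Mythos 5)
Your proposal is correct and follows essentially the same route as the paper: irreducibility is quoted from Corollary \ref{subspace_of_invertibles}, and the failure of absolute irreducibility is obtained by contradiction, pitting the Lang--Weil lower bound $N\geq q^{d-1}/4\geq q^2/4$ from Theorem \ref{Lang_Weil} (valid since $2d>n\geq 4$ forces $d>2$) against the fact that $N=1$. The bookkeeping you flag is exactly what the paper checks, so there is nothing further to add.
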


\begin{proof}
Corollary \ref{subspace_of_invertibles} implies that $P_{\M}$ is irreducible. Now the number $N$ of zeros of $P_{\M}$ is 1 under the assumption that each element of $\M^\times$ has non-zero determinant. However, as $d>2$ under the given hypotheses, Theorem \ref{Lang_Weil}
implies that $N$ is at least $q^2/4$ and we have a contradiction. Thus $P_{\M}$ is not absolutely irreducible if $q$ is sufficiently large.
\end{proof}

%We note in general that some restrictions on the size of $q$ are necessary in order that Theorem \ref{irreducible_but_not_absolutely}
%should be true. Thus for example, there is a seven-dimensional subspace $\M$ of $M_7(\mathbb{F}_2)$ whose non-zero elements
%all have determinant 1 but whose determinantal polynomial is absolutely irreducible ($\M$ is a seven-dimensional semifield).

\section{Determinantal polynomials that are not absolutely irreducible}

\noindent Let $\M$ be a subspace of $M_n(\mathbb{F}_q)$. We have shown in the previous section that there are certain sufficient conditions that ensure that $P_{\M}$ is irreducible. In this section we wish to explore consequences of the hypothesis that
$P_{\M}$ is irreducible but not absolutely irreducible. 

Suppose that $P_{\M}$ is irreducible but not absolutely irreducible. Then we may factor
$P_{\M}$ into absolutely irreducible factors in some extension field of finite degree over
$\mathbb{F}_q$. Let $g$ be an absolutely irreducible factor of degree $m$ of $P_{\M}$
with coefficients in $\mathbb{F}_{q^s}$, but not in any smaller subfield $\mathbb{F}_{q^t}$,
where $1\leq t<s$. As shown in \cite{GS}, Lemma 2, we have $n=sm$ and there is a factorization
\[
P_{\M}=\lambda gg^\sigma \cdots g^{\sigma^{s-1}},
\]
where $\lambda\in \mathbb{F}_q$ and $\sigma$ denotes the Frobenius mapping $a\mapsto a^q$
acting on the coefficients of $g$. Moreover, since we may express $\lambda$ as a product
\[
\lambda=\mu \mu^\sigma \cdots \mu^{\sigma^{s-1}},
\]
for some $\mu\in \mathbb{F}_{q^s}$, we may replace $g$ by $h=\mu g$ and obtain
\[
P_{\M}=hh^\sigma \cdots h^{\sigma^{s-1}},
\]
where $h$ is a homogeneous absolutely irreducible polynomial of degree $m$ in $d=\dim \M$ variables.

\begin{lemma} \label{characteristic_polynomial_from_determinantal_polynomial}
Let $\M$ be a $d$-dimensional subspace of $M_n(\mathbb{F}_q)$ whose determinantal
polynomial is irreducible but not absolutely irreducible. Suppose that the identity matrix
$I$ belongs to $\M$ and let $M_1=I$, $M_2$, \dots, $M_d$ be a basis of $\M$. Let
$P_{\M}$ be the determinantal polynomial evaluated with respect to this basis. Let
\[
P_{\M}=hh^\sigma \cdots h^{\sigma^{s-1}}
\]
be a factorization of $P_{\M}$ into $s$ Galois conjugate homogeneous polynomials 
of degree $m$ over $\mathbb{F}_{q^s}$, where $n=ms$.

Let $A=\lambda_1 I + \cdots +\lambda_d M_d$ be any element of $\M$, where the $\lambda_i$ are in $\mathbb{F}_q$. Then the characteristic polynomial $\det (yI-A)$ of $A$ is a product
of polynomials of the form 
\[
h(y-\lambda_1, -\lambda_2, \ldots,-\lambda_d)
\]
and its Galois conjugates over $\mathbb{F}_{q^s}$. Each of these polynomials has degree $m$.

\end{lemma}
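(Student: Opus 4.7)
The plan is to recognize $\det(yI-A)$ as the determinantal polynomial $P_\M$ evaluated at a specific point in the polynomial ring $\mathbb{F}_q[y]$, and then transport the given factorization of $P_\M$ through this evaluation. First I would write $yI - A = (y-\lambda_1)M_1 + (-\lambda_2)M_2 + \cdots + (-\lambda_d)M_d$. Since the identity $P_\M(x_1,\ldots,x_d) = \det(x_1 M_1 + \cdots + x_d M_d)$ is constructed from the determinant, a polynomial expression in matrix entries, it holds over any commutative $\mathbb{F}_q$-algebra, in particular over $\mathbb{F}_q[y]$. Substituting $x_1 \mapsto y - \lambda_1$ and $x_i \mapsto -\lambda_i$ for $i \geq 2$ yields
\[
\det(yI - A) = P_\M(y-\lambda_1, -\lambda_2, \ldots, -\lambda_d).
\]

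Next I would transport the hypothesized factorization $P_\M = hh^\sigma \cdots h^{\sigma^{s-1}}$, which is an identity in $\mathbb{F}_{q^s}[x_1, \ldots, x_d]$, through the same substitution. This immediately gives
\[
\det(yI - A) = h(y-\lambda_1, -\lambda_2, \ldots, -\lambda_d) \prod_{i=1}^{s-1} h^{\sigma^i}(y-\lambda_1, -\lambda_2, \ldots, -\lambda_d),
\]
exhibiting $\det(yI - A)$ as a product of a polynomial of the specified form together with its Galois conjugates over $\mathbb{F}_{q^s}$. The fact that applying $\sigma$ to the coefficients of $h$ and then substituting gives the same result as substituting first and then applying $\sigma$ is automatic, because $\sigma$ fixes $y$ and the scalars $\lambda_i \in \mathbb{F}_q$.

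Finally I would pin down the degrees. Since $h$ is homogeneous of degree $m$ and its arguments are substituted by polynomials in $y$ of degree at most $1$, each factor $h^{\sigma^i}(y-\lambda_1, -\lambda_2, \ldots, -\lambda_d)$ is a polynomial in $y$ of degree at most $m$. But the full product equals $\det(yI - A)$, which is monic of degree $n = ms$ in $y$, so each of the $s$ factors must contribute degree exactly $m$.

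I do not anticipate a serious obstacle: the lemma is essentially the statement that a polynomial identity pulls back under substitution into a commutative overring. The only step requiring any care is the bookkeeping that identifies the matrix $yI - A$ with a point of the affine space underlying the determinantal polynomial, and that step is exactly where the hypothesis $I \in \M$ (with $M_1 = I$) is used.
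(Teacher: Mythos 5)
Your proposal is correct and follows essentially the same route as the paper: both write $yI-A=(y-\lambda_1)I+(-\lambda_2)M_2+\cdots+(-\lambda_d)M_d$, substitute into the identity $\det(x_1I+\cdots+x_dM_d)=P_{\M}(x_1,\ldots,x_d)$, and push the factorization of $P_{\M}$ through the substitution. Your extra remarks on the compatibility of the substitution with $\sigma$ and the degree count are details the paper leaves implicit, but the argument is the same.
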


\begin{proof}
We have 
\[
\det (x_1I+x_2M_2+\cdots +x_dM_d)=P_{\M}(x_1, x_2, \ldots, x_d).
\]
Thus if we replace $x_1$ by $y-\lambda_1$, $x_2$ by $-\lambda_2$, \dots, $x_d$ by $-\lambda_d$, we obtain
\[
\det (yI-A)=P_{\M}(y-\lambda_1, -\lambda_2, \ldots, -\lambda_d)
\]
and the rest follows from the factorization of $P_{\M}$.
\end{proof}

The splitting of the determinantal polynomial into Galois conjugate
factors is most exploitable when the degree of the polynomial is a prime, as we shall show
in the next few results.

\begin{theorem} \label{subspace_of_determinant_zero}
Let $r$ be a prime integer and let $\M$ be a subspace of $M_r(\mathbb{F}_q)$.  Suppose that the determinantal polynomial of $\M$ is irreducible but not absolutely irreducible. Then the elements of determinant zero in $\M$ are a subspace of $\M$. 
\end{theorem}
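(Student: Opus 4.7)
The plan is to exploit the fact that $r$ is prime by considering the factorization discussed just before Lemma \ref{characteristic_polynomial_from_determinantal_polynomial}. Write
\[
P_{\M}=hh^{\sigma}\cdots h^{\sigma^{s-1}},
\]
where $h\in\mathbb{F}_{q^s}[x_1,\ldots,x_d]$ is absolutely irreducible and homogeneous of degree $m$, and $n=ms=r$. Because $P_{\M}$ is assumed not absolutely irreducible we have $s>1$, and since $r$ is prime the only possibility is $s=r$ and $m=1$. Thus $h$ is a \emph{linear} form
\[
h(x_1,\ldots,x_d)=\alpha_1x_1+\cdots+\alpha_dx_d,\qquad \alpha_i\in\mathbb{F}_{q^r}.
\]

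Next I would describe the elements of $\M$ of determinant zero in terms of $h$. Fix a basis $M_1,\ldots,M_d$ of $\M$ with respect to which $P_{\M}$ is computed, and parametrise $\M$ by $(\lambda_1,\ldots,\lambda_d)\in\mathbb{F}_q^d$. For $\lambda_i\in\mathbb{F}_q$ the Frobenius $\sigma$ fixes each $\lambda_i$, so $h^{\sigma^i}(\lambda)=h(\lambda)^{\sigma^i}$, and the factorization gives
\[
\det(\lambda_1M_1+\cdots+\lambda_dM_d)=\prod_{i=0}^{r-1}h(\lambda)^{\sigma^i}=N_{\mathbb{F}_{q^r}/\mathbb{F}_q}\bigl(h(\lambda)\bigr).
\]
Since the norm of a field element vanishes only when that element is zero, the determinant of $\lambda_1M_1+\cdots+\lambda_dM_d$ is zero if and only if $h(\lambda)=0$.

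Finally, the zero set $\{\lambda\in\mathbb{F}_q^d:\alpha_1\lambda_1+\cdots+\alpha_d\lambda_d=0\}$ of a linear form (even one with coefficients in the extension $\mathbb{F}_{q^r}$) is obviously closed under $\mathbb{F}_q$-linear combinations, so it is an $\mathbb{F}_q$-subspace of $\mathbb{F}_q^d$. Translating back through the chosen basis shows that the set of determinant-zero elements of $\M$ is an $\mathbb{F}_q$-subspace of $\M$.

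The only real content is the deduction $m=1$ from the primality of $r$, together with the non-absolute irreducibility hypothesis; once $h$ is linear, the passage from $h(\lambda)=0$ to a subspace is automatic. I anticipate no genuine obstacle beyond making sure the Galois-conjugate factorization is applied correctly (in particular, that $h^{\sigma^i}$ evaluated at an $\mathbb{F}_q$-point really equals the Frobenius-image of $h(\lambda)$), which is where I would double-check.
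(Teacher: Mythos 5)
Your proposal is correct and follows essentially the same route as the paper: both deduce from the primality of $r$ that the Galois-conjugate factorization forces the absolutely irreducible factor to be a linear form $\omega_1x_1+\cdots+\omega_dx_d$ over $\mathbb{F}_{q^r}$, observe that the determinant of an $\mathbb{F}_q$-point vanishes exactly when this linear form does (your explicit norm interpretation is a nice touch but the same deduction), and conclude that the zero set of the linear equation is an $\mathbb{F}_q$-subspace.
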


\begin{proof}
Let $d=\dim \M$ and let $M_1$, \dots, $M_d$ be a basis of $\M$. Let $P_{\M}$ be the determinantal polynomial of $\M$ evaluated with respect to this basis. Since we are assuming
that $P_{\M}$ is irreducible but not absolutely irreducible, the assumption that the polynomial has prime degree implies that it must factor over $\mathbb{F}_{q^r}$ into
$r$ Galois conjugate linear polynomials, say
\[
P_{\M}=\prod_{i=0}^{r-1}(\omega_1x_1 + \cdots +\omega_d x_d)^{\sigma^i}.
\]
Here, the $\omega_i$ are elements of $\mathbb{F}_{q^r}$ and $\sigma$ generates the Galois
group of $\mathbb{F}_{q^r}$ over $\mathbb{F}_{q}$.

Now an element $A=\lambda_1 M_1+\cdots +\lambda_d M_d$ of $\M$ has determinant zero if and only if 
\[
0=P_{\M}(\lambda_1, \ldots, \lambda_d)=\prod_{i=0}^{r-1}(\omega_1\lambda_1 + \cdots +\omega_d \lambda_d)^{\sigma^i}.
\]
It follows that $\det A=0$ if and only if 
\[
\omega_1\lambda_1 + \cdots +\omega_d \lambda_d=0.
\]
But the set of $d$-tuples $(\lambda_1, \ldots, \lambda_d)$ in $\mathbb{F}_q^d$ that satisfy
this linear equation over $\mathbb{F}_{q^r}$ is clearly a subspace and thus the elements
in $\M$ of determinant 0 are also a subspace of $\M$.
\end{proof}

\begin{lemma} \label{same_characteristic_polynomial}
Let $r$ be a prime integer and let $\M$ be a subspace of $M_r(\mathbb{F}_q)$ that contains
the identity element $I$.  Suppose that the determinantal polynomial of $\M$ is irreducible but not absolutely irreducible. Let $\M_0$ be the subspace of all elements of determinant zero, in accordance
with Theorem \ref{subspace_of_determinant_zero}, and let $\N$ be a complement of $\M_0$ in
$\M$, with $I\in \N$. Then if $A\in \N$, $B\in \M_0$, we have
\[
\det(yI-(A+B))=\det(yI-A).
\]
Thus $A$ and $A+B$ have the same characteristic polynomial.
\end{lemma}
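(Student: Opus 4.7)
The plan is to combine the explicit linear factorization of $P_{\M}$ from Theorem \ref{subspace_of_determinant_zero} with the formula for the characteristic polynomial provided by Lemma \ref{characteristic_polynomial_from_determinantal_polynomial}, and then exploit the linearity of the factors together with the fact that the coordinates of $B$ lie in $\mathbb{F}_q$.

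Concretely, I would first fix the notation from Theorem \ref{subspace_of_determinant_zero}: since $r$ is prime and $P_{\M}$ is irreducible but not absolutely irreducible, we have
\[
P_{\M}(x_1,\ldots,x_d)=\prod_{i=0}^{r-1}L^{\sigma^i}(x_1,\ldots,x_d),
\qquad L(x_1,\ldots,x_d)=\omega_1 x_1+\cdots+\omega_d x_d,
\]
with $\omega_j\in\mathbb{F}_{q^r}$, and the subspace $\M_0$ consists precisely of those $B=\sum_j\beta_j M_j$ in $\M$ with $L(\beta_1,\ldots,\beta_d)=0$. Writing $A=\sum_j\lambda_j M_j$ (with $\lambda_1$ attached to $M_1=I$) and $B=\sum_j\beta_j M_j$, Lemma \ref{characteristic_polynomial_from_determinantal_polynomial} gives
\[
\det(yI-(A+B))=P_{\M}\bigl(y-\lambda_1-\beta_1,-\lambda_2-\beta_2,\ldots,-\lambda_d-\beta_d\bigr),
\]
and similarly for $\det(yI-A)$ with the $\beta_j$ omitted.

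The crux of the argument is then the following observation. Since each $L^{\sigma^i}$ is $\mathbb{F}_{q^r}$-linear in $(x_1,\ldots,x_d)$, we have
\[
L^{\sigma^i}(y-\lambda_1-\beta_1,-\lambda_2-\beta_2,\ldots,-\lambda_d-\beta_d)
=L^{\sigma^i}(y-\lambda_1,-\lambda_2,\ldots,-\lambda_d)-L^{\sigma^i}(\beta_1,\ldots,\beta_d).
\]
Because the $\beta_j$ lie in $\mathbb{F}_q$ and are therefore fixed by $\sigma$, the quantity $L^{\sigma^i}(\beta_1,\ldots,\beta_d)$ equals $\sigma^i\bigl(L(\beta_1,\ldots,\beta_d)\bigr)$, which vanishes because $B\in\M_0$. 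Hence each factor $L^{\sigma^i}$ is unaltered by the substitution $\lambda_j\mapsto\lambda_j+\beta_j$, and taking the product over $i=0,\ldots,r-1$ gives
\[
P_{\M}(y-\lambda_1-\beta_1,-\lambda_2-\beta_2,\ldots)=P_{\M}(y-\lambda_1,-\lambda_2,\ldots),
\]
which is exactly the claimed equality of characteristic polynomials.

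There is no real obstacle here; the only subtle point is the Galois-invariance step, namely that although the individual $\omega_j$ live in $\mathbb{F}_{q^r}$, each linear form $L^{\sigma^i}$ still annihilates $\M_0$ simply because the coordinates $\beta_j$ are $\mathbb{F}_q$-rational and the defining condition $L(\beta_1,\ldots,\beta_d)=0$ is itself Galois stable. Everything else is a direct substitution.
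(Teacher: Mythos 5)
Your proof is correct and takes essentially the same route as the paper's: both rest on the substitution formula of Lemma \ref{characteristic_polynomial_from_determinantal_polynomial} together with the fact that the linear form $L$ appearing in the Galois-conjugate factorization annihilates the coordinate vectors of $\M_0$. The only difference is cosmetic --- the paper picks a basis adapted to the decomposition $\M=\N\oplus\M_0$ and concludes that $\omega_{s+1}=\cdots=\omega_d=0$, whereas you keep an arbitrary basis and use the linearity of each $L^{\sigma^i}$ plus the $\mathbb{F}_q$-rationality of the coordinates of $B$; the two computations amount to the same thing.
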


\begin{proof}
Let $d=\dim \M$ and let $s=\dim \N$. Let $M_1$, \dots, $M_s$ be a basis of $\N$ and $
M_{s+1}$, \dots, $M_d$ be a basis of $\M_0$. We then evaluate $P_{\M}$ with respect to the basis $M_1$, \dots, $M_d$ of $\M$. Over $\mathbb{F}_{q^r}$, we have the factorization
\[  
P_{\M}=\prod_{i=0}^{r-1}(\omega_1x_1 + \cdots +\omega_d x_d)^{\sigma^i},
\]
where the $\omega_i$ are elements of $\mathbb{F}_{q^r}$. 

Now $P_{\M}(0, \ldots, 0, \lambda_{s+1}, \ldots, \lambda_d)=0$ for all $(\lambda_{s+1}, \ldots, \lambda_d)$ in $\mathbb{F}_q^{d-s}$, since all elements of $\M_0$ have determinant 0. It follows that $\omega_{s+1}=\cdots=\omega_d=0$. 

Let $A$ and $B$ be arbitrary elements of $\N$ and $\M_0$, respectively, with
\[
 A=\lambda_1 M_1+\cdots+\lambda_sM_s, \qquad B=\lambda_{s+1} M_{s+1}+\cdots+\lambda_d M_d.
 \]
 Then we have
 \[
 \det(yI-(A+B))=P_{\M}(y-\lambda_1, \ldots, -\lambda_d)
 \]
 by Lemma \ref{characteristic_polynomial_from_determinantal_polynomial}, and this polynomial
 clearly equals $\det(yI-A)$, since $\omega_{s+1}=\cdots=\omega_d=0$. 
\end{proof}

\begin{corollary} \label{nilpotent}
Assume the hypotheses of Lemma \ref{same_characteristic_polynomial}. Then all elements of the subspace $\M_0$ of $\M$ are nilpotent.
\end{corollary}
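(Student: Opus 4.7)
The plan is to derive nilpotency of an arbitrary $B \in \M_0$ by applying Lemma~\ref{same_characteristic_polynomial} with the cleverest possible choice of comparison element $A \in \N$, and then invoking the Cayley--Hamilton theorem. The natural choice is $A = I$, which lies in $\N$ by the hypothesis of the preceding lemma.

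With this choice, the characteristic polynomial of $A = I$ is simply $(y-1)^r$, so Lemma~\ref{same_characteristic_polynomial} immediately gives
\[
\det(yI - (I+B)) = \det(yI - I) = (y-1)^r
\]
for every $B \in \M_0$. In other words, $I + B$ has characteristic polynomial with a single repeated root at $1$. Applying Cayley--Hamilton to $I + B$ then yields $((I+B) - I)^r = B^r = 0$, so $B$ is nilpotent.

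There is no real obstacle in this argument; the only ingredient requiring any thought is the recognition that, among all $A \in \N$, the identity is the one that produces a characteristic polynomial of the special form $(y-1)^r$, so that Cayley--Hamilton collapses directly to $B^r = 0$. Any other choice of $A \in \N$ would only tell us that $B$ shares a characteristic polynomial with $A$, a much weaker conclusion.
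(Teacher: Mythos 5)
Your proof is correct and follows essentially the same route as the paper: apply Lemma \ref{same_characteristic_polynomial} with a judiciously chosen $A\in\N$ and read off nilpotency from the resulting characteristic polynomial. The paper's only (cosmetic) difference is that it takes $A=0$ rather than $A=I$, which gives $\det(yI-B)=\det(yI)=y^r$ directly and makes $B^r=0$ immediate without the shift by $I$ --- so your closing remark that the identity is the uniquely good choice is not quite right, though it does not affect the validity of your argument.
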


\begin{proof}
This follows from Lemma \ref{same_characteristic_polynomial}, since $\det (yI-B)=\det(yI)=y^r$ for all $B$ in $\M_0$.
\end{proof}

We now show that it is not necessary to assume that the subspace $\M$ contains the identity 
in order to obtain a conclusion similar to Corollary \ref{nilpotent}.

\begin{corollary} \label{nilpotence_in_a_subspace_of_determinant_zero}
Let $r$ be a prime integer and let $\M$ be a subspace of $M_r(\mathbb{F}_q)$. Suppose that the determinantal polynomial of $\M$ is irreducible but not absolutely irreducible. Let $A$,
$B$ be elements of $\M$ that satisfy $\det A\neq 0$, $\det B=0$. Then $A^{-1}B$ is nilpotent.
\end{corollary}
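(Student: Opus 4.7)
The plan is to reduce to the case already handled by Corollary \ref{nilpotent}, by using the equivalence construction introduced in the first section. Specifically, I would left-multiply $\M$ by $A^{-1}$ to obtain the identity in the resulting subspace, while preserving all reducibility properties.

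Let $\N = A^{-1}\M = \{A^{-1}M : M \in \M\}$. This is the subspace of $M_r(\mathbb{F}_q)$ equivalent to $\M$ with $C = A^{-1}$ and $D = I$, so by the discussion of equivalent subspaces in the introduction, $P_{\N}$ is a nonzero scalar multiple of $P_{\M}$ (up to the linear change of variables coming from the basis transformation). In particular, $P_{\N}$ is irreducible, and it is not absolutely irreducible since the same holds for $P_{\M}$, as absolute irreducibility is invariant under nonzero scalar multiples and linear changes of variables.

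Now I would verify the two hypotheses needed to invoke Corollary \ref{nilpotent} on $\N$: first, $I = A^{-1}A \in \N$; second, $A^{-1}B \in \N$ and $\det(A^{-1}B) = \det(A)^{-1}\det(B) = 0$. By Theorem \ref{subspace_of_determinant_zero} applied to $\N$, the set $\N_0$ of determinant-zero elements of $\N$ is a subspace, and $A^{-1}B \in \N_0$. Since $I \notin \N_0$, we may choose a complement of $\N_0$ in $\N$ that contains $I$, so the hypotheses of Lemma \ref{same_characteristic_polynomial} are satisfied. Corollary \ref{nilpotent} then yields that every element of $\N_0$ is nilpotent; in particular $A^{-1}B$ is nilpotent.

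There is no real obstacle here: the proof is essentially the observation that the whole theory built up in this section is invariant under the equivalence $\M \mapsto A^{-1}\M$, which is exactly the tool needed to insert the identity into the subspace without loss of generality. The only small point to be careful about is confirming that the factorization $P_{\M} = hh^{\sigma}\cdots h^{\sigma^{s-1}}$ transfers correctly to $\N$, but this is immediate from the multiplicative property of determinants used in the definition of equivalence.
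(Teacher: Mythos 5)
Your argument is correct and is essentially identical to the paper's own proof: both pass to $\N=A^{-1}\M$, observe that $P_{\N}$ is a scalar multiple of $P_{\M}$ so the irreducibility hypotheses transfer, and then invoke Corollary \ref{nilpotent}. You have simply spelled out the intermediate steps (existence of a complement containing $I$, membership of $A^{-1}B$ in $\N_0$) that the paper leaves implicit.
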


\begin{proof}
We form the subspace $\N=A^{-1}\M$, which contains $I$ and has the same dimension as $\M$. 
Furthermore $P_{\N}$ and $P_{\M}$ are scalar multiples of each other. The result follows
from Corollary \ref{nilpotent}.
\end{proof}

\begin{theorem} \label{irreducible_polynomial}
Let $r$ be a prime integer and let $\M$ be a subspace of $M_r(\mathbb{F}_q)$ with $2\dim \M>r$. Suppose that each element of $\M^\times$ has non-zero determinant and that the determinantal polynomial of $\M$ is not absolutely irreducible (a supposition guaranteed if
$q>r^6$). Let
$A$ and $B$ be linearly independent elements of $\M^\times$. Then the characteristic polynomial of $A^{-1}B$ is irreducible of degree $r$. Furthermore, for fixed $A$, there are at most $r$ elements $B$ in $\M$ for which $A^{-1}B$ has a given irreducible characteristic polynomial.
\end{theorem}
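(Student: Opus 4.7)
I would reduce to the setting of Lemma~\ref{characteristic_polynomial_from_determinantal_polynomial} by passing from $\M$ to the $A$-translate $\N = A^{-1}\M$, which has the same dimension, contains $I$, still has $\N^\times$ consisting of invertible matrices, and has determinantal polynomial equal to a nonzero $\mathbb{F}_q$-scalar multiple of $P_\M$, hence still irreducible but not absolutely irreducible. Primality of $r$ forces the Galois factorization over $\mathbb{F}_{q^r}$ to take the shape $P_{\N} = h\,h^\sigma\cdots h^{\sigma^{r-1}}$ with $h$ a single linear form $\omega_1 x_1 + \cdots + \omega_d x_d$, since $r = ms$ with $s > 1$ leaves only the possibility $m=1$, $s=r$. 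Taking a basis $I = M_1, M_2, \ldots, M_d$ of $\N$, evaluating $P_{\N}$ at $(1, 0, \ldots, 0)$ returns $\det I = 1$, so $\omega_1 \ne 0$, and Lemma~\ref{characteristic_polynomial_from_determinantal_polynomial} gives
\[
\det(yI - C) \;=\; \prod_{i=0}^{r-1}\bigl(y - \alpha(\lambda)^{\sigma^i}\bigr)
\]
for every $C = \lambda_1 I + \lambda_2 M_2 + \cdots + \lambda_d M_d$ in $\N$, where $\alpha(\lambda) = \lambda_1 + (\omega_2/\omega_1)\lambda_2 + \cdots + (\omega_d/\omega_1)\lambda_d \in \mathbb{F}_{q^r}$ is an $\mathbb{F}_q$-linear function of the coordinates.

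The pivotal technical step is to show that the $\mathbb{F}_q$-linear map $\phi \colon \mathbb{F}_q^d \to \mathbb{F}_{q^r}$ defined by $\phi(\lambda) = \alpha(\lambda)$ is injective. Indeed, any nonzero $\lambda$ in the kernel of $\phi$ would produce a nonzero $C \in \N$ with characteristic polynomial $y^r$, hence nilpotent, contradicting the fact that every nonzero element of $\N$ is invertible.

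Both conclusions of the theorem now follow readily. For irreducibility of the characteristic polynomial of $A^{-1}B$, apply the formula above to $C = A^{-1}B$, obtaining the polynomial $\prod_i(y - \alpha^{\sigma^i})$ where $\alpha = \phi(\lambda)$. Since $r$ is prime, the only subfields of $\mathbb{F}_{q^r}$ are $\mathbb{F}_q$ and itself, so this polynomial is irreducible over $\mathbb{F}_q$ unless $\alpha \in \mathbb{F}_q$. In the latter case it would equal $(y-\alpha)^r$, forcing $A^{-1}B - \alpha I = A^{-1}(B - \alpha A)$ to be nilpotent; but $B - \alpha A$ is a nonzero element of $\M$ by linear independence of $A$ and $B$, hence invertible by hypothesis, a contradiction.

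For the counting statement, any $B' \in \M$ for which $A^{-1}B'$ has the prescribed irreducible characteristic polynomial $p(y) = \prod_i(y - \beta^{\sigma^i})$ must have $\phi(\lambda')$ lying in the root set $\{\beta^{\sigma^j} : 0 \le j \le r-1\}$, a set of $r$ elements in $\mathbb{F}_{q^r}$. Injectivity of $\phi$ then yields at most one $\lambda'$, and hence at most one $B'$, for each such root, giving the bound $r$. The main obstacle I anticipate is recognising that the injectivity of $\phi$ is the single structural fact that simultaneously drives both irreducibility and the counting bound; everything else is essentially bookkeeping against Lemma~\ref{characteristic_polynomial_from_determinantal_polynomial}.
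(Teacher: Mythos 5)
Your proof is correct and follows essentially the same route as the paper: translate by $A^{-1}$ to put $I$ in the subspace, factor the (irreducible, by Corollary \ref{contains_subspace_of_invertibles}) determinantal polynomial into $r$ Galois-conjugate linear forms over $\mathbb{F}_{q^r}$, and exploit the non-vanishing of that linear form on $\mathbb{F}_q^d\setminus\{0\}$. Your injectivity of $\phi$ is just a repackaging of the paper's observation that $1,\omega_2,\ldots,\omega_d$ are linearly independent over $\mathbb{F}_q$, and both conclusions then follow exactly as in the paper's argument.
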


\begin{proof}
We may replace the subspace $\M$ by $A^{-1}\M$, which contains the identity. It therefore suffices to prove that if $\M$ contains the identity element and $C$ is an element of $\M$
that is not a scalar multiple of the identity, then the characteristic polynomial of $C$ is irreducible and there are at most $r$ elements of $\M$ with the same characteristic polynomial as $C$. 

Let $d=\dim \M$ and let $M_1=I$, \dots, $M_d$ be  a basis of $\M$. Note that $P_{\M}$ is irreducible, by Corollary \ref{contains_subspace_of_invertibles}. 
Our previous discussion shows that, over $\mathbb{F}_{q^r}$, we can factor
$P_{\M}$ as a product of $r$ Galois conjugate linear polynomials, as follows:
\[  
P_{\M}=\prod_{i=0}^{r-1}(\omega_1x_1 + \cdots +\omega_d x_d)^{\sigma^i},
\]
where the $\omega_i$ are elements of $\mathbb{F}_{q^r}$. We note that the $\omega_i$ are in fact linearly independent over $\mathbb{F}_q$, since a non-trivial dependence between them
implies that $P_{\M}$ has a non-trivial zero in $\mathbb{F}_q^d$, contrary to the fact that
all elements of $\M^\times$ have non-zero determinant. 

Let $C$ be written in terms of the basis as
\[
C=\lambda_1 I+\lambda_2M_2+\cdots +\lambda_d M_d,
\]
where at least one $\lambda_i$ is non-zero for $i\geq 2$. We set
\[
\alpha=\lambda_1 +\lambda_2 \omega_2 +\cdots +\lambda_d\omega_d.
\]
Then the linear independence of the $\omega_i$ implies that $\alpha$ is in $\mathbb{F}_{q^r}$ but not in $\mathbb{F}_q$. 

It follows from Lemma \ref{characteristic_polynomial_from_determinantal_polynomial} that 
the characteristic polynomial of $C$ is the product of the linear polynomial
$y-\alpha$ and its Galois conjugates. Since $\alpha$ is not in $\mathbb{F}_q$, it has $r$ different Galois conjugates and $\det(yI-C)$ is irreducible of degree $r$, as claimed.

Suppose next that $D$ is an element of $\M$ with the same characteristic polynomial as $C$.
Write
\[
D=\mu_1 I+\mu_2M_2+\cdots +\mu_d M_d,
\]
where the $\mu_i$ are in $\mathbb{F}_q$. Then since the eigenvalues of $C$ are $\alpha^{q^i}$, $0\leq i\leq r-1$, and the same is true of $D$, we must have
\[
\alpha^{q^i}=\mu_1 +\mu_2 \omega_2 +\cdots +\mu_d\omega_d
\]
for some $i$ satisfying $0\leq i\leq r-1$. 

Now if there are more than $r$ elements of $\M$ with the same characteristic polynomial,
there must exist $(\nu_1, \ldots, \nu_d)\neq (0, \ldots, 0)$ in $\mathbb{F}_q^d$ with
\[
0=\nu_1+\nu_2 \omega_2+\cdots +\nu_d \omega_d.
\]
This contradicts the linear independence of the $\omega_i$. Hence at most $r$ elements
of $\M$ have the given characteristic polynomial, as required.
\end{proof}

We require that $P_{\M}$ is irreducible but not absolutely irreducible for the proof of Theorem \ref{irreducible_polynomial} and have pointed out that this is automatic under the given hypotheses if $q$ is sufficiently large. When $q$ is small, behaviour with regard
to characteristic polynomials can be completely different from that described in the conclusions of the theorem. Thus, for example, there is a 5-dimensional subspace $\M$, say, of $M_5(\mathbb{F}_2)$ that contains the identity
and whose 31 non-zero elements all have determinant equal to 1. The 30 elements of $\M$ different from 0 and $I$ fall into two subsets each
of size 15. The elements in one subset have characteristic polynomial $(x^2+x+1)(x^3+x+1)$, those in the other have characteristic
polynomial $(x^2+x+1)(x^3+x^2+1)$. There are two equivalence classes of such subspaces.

Similarly, there is a 7-dimensional subspace $\N$, say, of $M_7(\mathbb{F}_2)$ that contains the identity and whose 127 non-zero elements
all have determinant 1. Some elements of $\N$ have irreducible characteristic polynomial but not every irreducible polynomial
of degree 7 occurs as a characteristic polynomial. Some elements of $\N$ different from 0 and $I$ have a reducible characteristic polynomial.
We are grateful to John Sheekey for providing information about these two unusual subspaces.

We proceed to show that the hypothesis $2\dim \M>r$ in Theorem \ref{irreducible_polynomial} is crucial  by means of a simple example.

\begin{theorem} \label{reducible_polynomials}
Let $d$ be a positive integer and set $n=2d+1$. Then there exists a subspace $\M$ of $M_n(\mathbb{F}_q)$ of dimension $d$ with the property
that each element of $\M^\times$ has non-zero determinant, yet for all elements $A$ and $B$ of $\M^\times$, the characteristic polynomial
of $A^{-1}B$ is reducible.
\end{theorem}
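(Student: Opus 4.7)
The plan is to produce $\M$ as a commutative cyclic subalgebra $\mathbb{F}_q[C]$ of $M_n(\mathbb{F}_q)$, where $C$ is deliberately block-diagonal so that every element of the algebra respects a decomposition $\mathbb{F}_q^n = \mathbb{F}_q^d \oplus \mathbb{F}_q^{d+1}$. Concretely, I will choose monic irreducible polynomials $p_1, p_2 \in \mathbb{F}_q[x]$ of degrees $d$ and $d+1$ respectively (such polynomials exist over every finite field), take $C$ to be the block-diagonal matrix whose two diagonal blocks are the companion matrices of $p_1$ and $p_2$, and set $\M$ equal to the $\mathbb{F}_q$-span of $I, C, C^2, \ldots, C^{d-1}$.

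The first step will be to check the dimension and the invertibility of non-zero elements. Since $C$ has minimal polynomial $p_1 p_2$ of degree $n = 2d+1$, the matrices $I, C, \ldots, C^{d-1}$ are linearly independent, so $\dim \M = d$. A non-zero element has the form $f(C)$ with $\deg f < d \leq \min(\deg p_1, \deg p_2)$, so $f$ is coprime to $p_1 p_2$, and hence $f(C)$ is invertible; indeed $\mathbb{F}_q[C] \cong \mathbb{F}_q[x]/(p_1 p_2) \cong \mathbb{F}_{q^d} \times \mathbb{F}_{q^{d+1}}$ by the Chinese Remainder Theorem.

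Next I will show that $\det(yI - A^{-1}B)$ is reducible for all $A, B \in \M^\times$. Since $\mathbb{F}_q[C]$ is a commutative finite-dimensional $\mathbb{F}_q$-algebra in which $A$ is invertible, $A^{-1}$ lies in $\mathbb{F}_q[C]$, so $A^{-1}B \in \mathbb{F}_q[C]$. Every element of $\mathbb{F}_q[C]$ is block-diagonal with the same block structure as $C$, so the characteristic polynomial of $A^{-1}B$ factors as the product of the characteristic polynomials of its two blocks, of degrees $d$ and $d+1$. Both factors have positive degree, so the product is reducible.

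There is no significant obstacle here; the work is just to write the above carefully. The one point worth flagging is that the partition $n = d + (d+1)$ is essentially forced by the dimension hypothesis: to keep $f(C)$ invertible for all non-zero $f$ of degree less than $d$, each irreducible factor of the characteristic polynomial of $C$ must have degree at least $d$, and $d + (d+1)$ is the only way to write $2d+1$ as a sum of two such integers.
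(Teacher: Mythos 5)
Your construction is correct and is essentially the paper's own: the paper likewise takes block-diagonal matrices with blocks of sizes $d$ and $d+1$ drawn from field embeddings $\mathbb{F}_{q^d}\hookrightarrow M_d(\mathbb{F}_q)$ and $\mathbb{F}_{q^{d+1}}\hookrightarrow M_{d+1}(\mathbb{F}_q)$, so that every characteristic polynomial splits into monic factors of degrees $d$ and $d+1$. Your version, realizing the subspace as the span of $I, C, \dots, C^{d-1}$ inside $\mathbb{F}_q[C]\cong\mathbb{F}_q[x]/(p_1p_2)$ with the invertibility check done via coprimality, is simply a concrete instance of that same recipe.
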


\begin{proof}
For each positive integer $t$, we may embed $\mathbb{F}_{q^t}$ into $M_t(\mathbb{F}_q)$. Thus, as is well known, $M_t(\mathbb{F}_q)$ contains
a $t$-dimensional subspace in which each non-zero element has non-zero determinant.

Let $\M_1$, $\N_1$ be such subspaces of $M_d(\mathbb{F}_q)$ and $M_{d+1}(\mathbb{F}_q)$ of dimensions $d$ and $d+1$, respectively. Let $M_1$,
\dots, $M_d$ be a basis of $\M_1$ and let $N_1$, \dots, $N_d$ be linearly independent elements in $\N_1$. Let $\M$ be the $d$-dimensional
subspace of $M_{2d+1}(\mathbb{F}_q)$ with basis consisting of the elements
\[
\left(
\begin{array}
{cc}
        M_i&0\\
        0&N_i  
\end{array}
\right),
\]
$1\leq i\leq d$. Clearly, the elements of $\M^\times$ have non-zero determinant but the characteristic polynomial of $A^{-1}B$ is reducible
for all pairs $A$ and $B$ in $\M^\times$.
\end{proof}

Let us now show that, in certain circumstances, the subspace constructed above is maximal with respect to the property that all its
non-zero elements have non-zero determinant.

\begin{corollary} \label{maximal_subspace}
Let $r=2d+1$ be a prime, with $d>1$, and let $\M$ be a subspace of dimension $d$ in $M_r(\mathbb{F}_q)$ of the kind constructed in Theorem \ref{reducible_polynomials}. Then provided $q$ is sufficiently large (say $q>r^6$), $\M$ is contained in no strictly larger subspace
$\M_1$, say, with the property that all elements of $\M_1$ have non-zero determinant.
\end{corollary}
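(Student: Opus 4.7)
The plan is a proof by contradiction. Assume that $\M$ is properly contained in a subspace $\M_1 \subseteq M_r(\mathbb{F}_q)$ every non-zero element of which is invertible. Then $\dim \M_1 \geq d+1$, whence $2\dim \M_1 \geq 2d+2 > 2d+1 = r$. Since $r = 2d+1$ is an odd prime with $d > 1$, we have $r \geq 5 \geq 4$, and by hypothesis $q > r^6$. With these numerical inputs in place, Corollary~\ref{irreducible_but_not_absolutely} applies directly to $\M_1$ and asserts that $P_{\M_1}$ is irreducible but not absolutely irreducible.

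Having verified those hypotheses, I would then invoke Theorem~\ref{irreducible_polynomial} for the subspace $\M_1$ (whose conditions $2\dim \M_1 > r$, invertibility of $\M_1^\times$, and non-absolute-irreducibility of $P_{\M_1}$ have all been secured). It yields the key conclusion: for \emph{any} pair of linearly independent elements $A, B \in \M_1^\times$, the characteristic polynomial $\det(yI - A^{-1}B)$ is irreducible of degree $r$ over $\mathbb{F}_q$.

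To close, I would restrict attention to $A, B$ lying in the original $\M \subset \M_1$. Since $\dim \M = d \geq 2$, the subspace $\M$ contains linearly independent pairs, and each of these has non-zero determinant by the construction of Theorem~\ref{reducible_polynomials}. But by that construction every element of $\M$ is block diagonal with a $d\times d$ block and a $(d+1)\times(d+1)$ block; hence so is $A^{-1}B$, and its characteristic polynomial factors as a product of the two block characteristic polynomials, of degrees $d$ and $d+1$ respectively. Because $d > 1$ forces both factors to have degree strictly between $1$ and $r$, this factorization is non-trivial, so $\det(yI - A^{-1}B)$ is reducible, contradicting the previous paragraph. The role of $d > 1$ is essential on two counts: it ensures $r \geq 5$ so that Corollary~\ref{irreducible_but_not_absolutely} applies, and it guarantees that $\M$ itself contains a linearly independent pair $A, B$ needed to extract the contradiction. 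No substantive obstacle arises beyond verifying that these numerical hypotheses align; the argument is essentially bookkeeping once Theorem~\ref{irreducible_polynomial} is in hand.
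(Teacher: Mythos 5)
Your argument is correct and follows essentially the same route as the paper: assume a strictly larger invertible subspace $\M_1$, note $2\dim\M_1>r$ forces $P_{\M_1}$ irreducible but not absolutely irreducible for $q>r^6$, apply Theorem \ref{irreducible_polynomial} to conclude $A^{-1}B$ has irreducible characteristic polynomial for independent $A,B\in\M_1^\times$, and contradict this with a block-diagonal pair from $\M$. The only cosmetic difference is that you package the irreducibility and non-absolute-irreducibility via Corollary \ref{irreducible_but_not_absolutely} (checking $r\geq 5\geq 4$), where the paper cites Corollary \ref{contains_subspace_of_invertibles} and Lang--Weil separately.
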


\begin{proof}
Suppose that  $\M$ is contained in a strictly larger subspace
$\M_1$ with the property that all elements of $\M_1^\times$ have non-zero determinant. Then since $\dim \M_1\geq d+1$, it follows that
$2\dim \M_1>r$. In turn, Corollary \ref{contains_subspace_of_invertibles} implies that $P_{\M_1}$ is irreducible. On the other hand,
the Lang-Weil theorem implies that $P_{\M_1}$ is not absolutely irreducible if $q$ is sufficiently large compared with $r$.
Thus, assuming that $P_{\M_1}$ is not absolutely irreducible, Theorem \ref{irreducible_polynomial} implies that for all pairs $A$ and $B$ in $\M_1^\times$, with $A$ and $B$ linearly independent, the characteristic polynomial of $A^{-1}B$ is irreducible. However, as $d>1$, we may
choose $A$ and $B$ linearly independent in $\M$ and then $A^{-1}B$ has a reducible polynomial. This is a contradiction, and we see that
$\M$ is maximal with the non-vanishing determinant property.
\end{proof}

It is a matter of some importance to find  sufficient conditions for a determinantal polynomial to be irreducible. We give one such condition
in the case that we are dealing with a subspace of $r\times r$ matrices where $r$ is a prime. Our proof requires a subsidiary result,
which we present next as a separate lemma.

\begin{lemma} \label{codimension_n}
Let $K$ be a field and let $\M$ be a subspace of $M_n(K)$. Suppose that the elements 
of determinant zero in $\M$ are a subspace $\M_0$ of $\M$ of codimension $n$. Then $\M_0=0$
and all elements of $\M^\times$ have non-zero determinant.
\end{lemma}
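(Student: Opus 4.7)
The plan is to work with any complement $\N$ of $\M_0$ in $\M$, so that $\M = \N \oplus \M_0$ and $\dim \N = n$ by the codimension hypothesis. Two structural observations follow immediately from $\N \cap \M_0 = 0$: first, every nonzero element of $\N$ is invertible (a singular $A \in \N$ would lie in $\M_0 \cap \N = 0$); second, an element $A + C$ with $A \in \N$ and $C \in \M_0$ is singular precisely when $A = 0$, since this is equivalent to $A + C \in \M_0$.

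The crux of the argument is a simple dimension count. For each nonzero $v \in K^n$, consider the $K$-linear evaluation map $\mathrm{ev}_v \colon \N \to K^n$ given by $A \mapsto Av$. Any $A$ in the kernel of $\mathrm{ev}_v$ is singular, hence zero by the first observation, so $\mathrm{ev}_v$ is injective; and since $\dim \N = n = \dim K^n$, injectivity forces bijectivity. This is the one place where the \emph{exact} value of the codimension is used.

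To finish, assume for contradiction that there is a nonzero $C \in \M_0$. Fix any nonzero $v \in K^n$ and use $\mathrm{ev}_v$ to produce the unique $A \in \N$ with $Av = Cv$. Then $(A - C)v = 0$, so $A - C$ is singular; by the second observation above this forces $A = 0$, whence $Cv = Av = 0$. Since $v$ was an arbitrary nonzero vector in $K^n$, $C$ annihilates all of $K^n$, so $C = 0$, a contradiction. Hence $\M_0 = 0$, and the second assertion of the lemma is then immediate, since the singular elements of $\M$ form the set $\M_0 = \{0\}$.

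The main obstacle, such as it is, is spotting the correct translation: the hypothesis on codimension should be converted into the statement that $\N$ maps isomorphically onto $K^n$ via every nonzero evaluation. Once this reformulation is in hand, the rest of the proof is a few lines of linear algebra, and no polynomial machinery (such as the determinantal polynomial $P_\M$) is actually needed.
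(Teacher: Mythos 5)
Your proof is correct and follows essentially the same route as the paper: both arguments turn on the evaluation map $M \mapsto Mv$ at a nonzero vector $v$ and the observation that the codimension-$n$ hypothesis forces the singular elements to coincide exactly with the kernel of that map (you phrase this as bijectivity of the evaluation restricted to a complement $\N$, the paper as the equality $\ker \epsilon_v = \M_0$ via rank--nullity applied to all of $\M$). There are no gaps.
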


\begin{proof}
We consider $\M$ acting by left multiplication on the vector space $V=K^n$ of column vectors
over $K$. Given a vector $v\in V$, we define $\epsilon_v: \M\to V$ by 
\[
\epsilon_v(M)=Mv
\]
for all $M\in\M$. The rank-nullity theorem implies that
\[
\dim \M=\ker \epsilon_v+\dim \epsilon_v(\M).
\]
Clearly, $\dim \epsilon_v(\M)\leq n$, and we see that the  codimension of $\ker \epsilon_v$ in $\M$ is at most $n$. Furthermore, any element of $\ker \epsilon_v$ has determinant 0, since it annihilates the vector $v$. Thus $\ker \epsilon_v\leq \M_0$, since
$\M_0$ consists of all elements of $\M$ of determinant 0. 

We deduce that $\ker \epsilon_v=\M_0$, since $\M_0$ has codimension $n$ in $\M$, whereas
$\ker \epsilon_v$ has codimension at most $n$. It follows that $\M_0$ annihilates all
elements of $V$ and hence is the zero subspace. 
\end{proof}

\begin{theorem} \label{dimension_r_absolutely_irreducible}
Let $r$ be a prime and let $\M$ be a $d$-dimensional subspace of $M_r(\mathbb{F}_q)$. Suppose that $\M$ contains an $r$-dimensional subspace $\N$
such that all elements of $\N^\times$ have non-zero determinant. Suppose also that $\dim \M>r$. Then $P_{\M}$ is absolutely irreducible
and if $q$ is sufficiently large, say $q>r^6$, the number of elements in $\M$ with determinant zero is $O(q^{d-1})$.
\end{theorem}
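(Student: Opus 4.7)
The plan is to establish irreducibility of $P_{\M}$ via Corollary \ref{contains_subspace_of_invertibles} (since $2\dim \N = 2r > r$), then upgrade to absolute irreducibility by a contradiction argument exploiting the fact that $r$ is prime, and finally to read off the count of zero-determinant elements from the Lang-Weil bound in Theorem \ref{Lang_Weil}.

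Suppose for contradiction that $P_{\M}$ is irreducible but not absolutely irreducible. Because $\deg P_{\M} = r$ is prime, the factorization discussion preceding Lemma \ref{characteristic_polynomial_from_determinantal_polynomial} forces $P_{\M}$ to split over $\mathbb{F}_{q^r}$ as a product of $r$ Galois conjugate linear forms
\[
P_{\M}=\prod_{i=0}^{r-1}\bigl(\omega_1 x_1 + \cdots + \omega_d x_d\bigr)^{\sigma^i},
\]
with $\omega_i \in \mathbb{F}_{q^r}$. By Theorem \ref{subspace_of_determinant_zero}, the set $\M_0 \subseteq \M$ of elements of determinant zero is a subspace, and an element with coordinates $(\lambda_1,\ldots,\lambda_d)\in \mathbb{F}_q^d$ lies in $\M_0$ precisely when $\omega_1 \lambda_1 + \cdots + \omega_d \lambda_d = 0$ in $\mathbb{F}_{q^r}$.

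Now I would choose the basis of $\M$ so that its first $r$ elements form a basis of $\N$. Restricting $P_{\M}$ to the variables $x_1,\ldots,x_r$ yields $P_{\N}$, and the hypothesis on $\N$ says $P_{\N}$ has no nontrivial zeros in $\mathbb{F}_q^r$; hence $\omega_1 \lambda_1 + \cdots + \omega_r \lambda_r \neq 0$ for every nonzero $(\lambda_1,\ldots,\lambda_r)\in \mathbb{F}_q^r$. This means $\omega_1,\ldots,\omega_r$ are $\mathbb{F}_q$-linearly independent, so they form an $\mathbb{F}_q$-basis of $\mathbb{F}_{q^r}$. Consequently the $\mathbb{F}_q$-linear map $\mathbb{F}_q^d\to \mathbb{F}_{q^r}$, $(\lambda_i)\mapsto \sum \omega_i \lambda_i$, is surjective, so $\M_0$ (its kernel) has dimension $d-r$, i.e.\ codimension $r$ in $\M$. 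Lemma \ref{codimension_n} (with $n = r$) then forces $\M_0 = 0$, whence $d = r$, contradicting the hypothesis $\dim \M > r$. This proves absolute irreducibility of $P_{\M}$.

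Once $P_{\M}$ is absolutely irreducible, Theorem \ref{Lang_Weil} (applied with $n = r$ and $q > r^6$) immediately gives $N \le \tfrac{7}{4} q^{d-1}$, so the number of elements in $\M$ of zero determinant is $O(q^{d-1})$; for the small prime cases $r = 2, 3$ that fall outside the $n \ge 4$ range of Theorem \ref{Lang_Weil}, the same conclusion follows directly from the Cafure-Matera form of the Lang-Weil estimate cited in \cite{CM}. The only delicate step is the linear-independence deduction for $\omega_1,\ldots,\omega_r$, which is what makes the codimension in Lemma \ref{codimension_n} come out to exactly $r$ and triggers the contradiction; everything else is routine bookkeeping.
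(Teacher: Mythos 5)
Your proof is correct and follows essentially the same route as the paper: irreducibility from the embedded subspace $\N$, then linear Galois-conjugate factors, Theorem \ref{subspace_of_determinant_zero}, and Lemma \ref{codimension_n} to contradict $\dim\M>r$, then Lang--Weil for the count. You actually spell out the one step the paper leaves implicit---that restricting to $\N$ forces $\omega_1,\ldots,\omega_r$ to be $\mathbb{F}_q$-linearly independent, so $\M_0$ has codimension exactly $r$---which is a welcome clarification rather than a deviation.
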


\begin{proof}
Corollary \ref{subspace_of_invertibles} implies that $P_{\M}$ is irreducible. If $P_{\M}$ is not absolutely irreducible, the elements
of determinant zero are a subspace, by Theorem \ref{subspace_of_determinant_zero}. This contradicts Lemma \ref{codimension_n}.
The estimate for the number of elements of determinant zero in $\M$ follows from the Lang-Weil theorem.
\end{proof}

Theorem \ref{dimension_r_absolutely_irreducible} depends crucially on the primality of $r$, as is obvious from its manner of proof.
We illustrate this point with an example obtained by field reduction.

\noindent{\bf Example 2 } Let $m>1$ and $s>1$ be integers. Consider the space $M_m(\mathbb{F}_{q^s})$ of $m\times m$ matrices over
$\mathbb{F}_{q^s}$. This space has dimension $m^2$ over $\mathbb{F}_{q^s}$. We may consider $M_m(\mathbb{F}_{q^s})$ as a subspace
of dimension $m^2s$ in $M_{ms}(\mathbb{F}_q)$. It is clear that $\M$ contains a subspace $\N$ of dimension $ms$ in which each element
of $\N^\times$ has non-zero determinant. However, it is easy to see that the number of elements of determinant zero in
$\M$ is $O(q^{s(m^2-1)})$, rather than $O(q^{sm^2-1})$, which an analogy with Theorem \ref{dimension_r_absolutely_irreducible} would suggest.
Of course, $P_{\M}$ is not absolutely irreducible, a fact that can be explained by the Lang-Weil theorem if $q$ is sufficiently large,
although simpler explanations can be given, valid for all $q$.

\section{Examples where the elements of determinant zero are a subspace}

\noindent We turn to considering examples of subspaces of matrices in which the non-invertible elements are a proper subspace of the given space.
We are able to work in greater generality than the domain of finite fields.

We acknowledge the help of John Sheekey in providing the ideas used to construct the subspaces in Theorems \ref{three_dimensional_subspace}
and \ref{larger_subspace_of_noninvertibles}.

Let $K$ be a field and $L$ a Galois extension field of degree 3 over $K$, with cyclic Galois
group generated by $\sigma$. We consider $L$ as a vector space of dimension 3 over $K$ and 
$\sigma$ as a $K$-linear endomorphism of $L$. Consider the set of all $K$-linear endomorphisms of $L$ of the form $T_{a,b}$ where
\[
T_{a,b}(z)=a(\sigma^2-\sigma)(z)+bz
\]
for all $z\in L$. Here, $a$ runs over the elements of $K$ and $b$ over the elements of trace 0 in $L$ (so that $b+\sigma(b)+\sigma^2(b)=0$). 

\begin{theorem} \label{three_dimensional_subspace}
The set of all $T_{a,b}$ described above is a three-dimensional subspace, $\N$, say, of
$K$-linear endomorphisms of $L$. The only elements of $\N$ that are not invertible are the
$K$-multiples of $\sigma^2-\sigma$. Thus the non-invertible elements of $\N$
are a one-dimensional subspace, whose non-zero elements have rank $2$.
\end{theorem}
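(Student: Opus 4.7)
The plan is threefold: verify that $\N$ is a $K$-subspace of dimension exactly $3$, determine when $T_{a,b}$ fails to be invertible, and then identify the non-invertible set and compute the rank of $\sigma^2-\sigma$.

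For the subspace/dimension claim, additivity and $K$-linearity in $(a,b)$ are immediate, so $\N$ is a subspace. The parameter space $K\oplus \ker\Tr_{L/K}$ has $K$-dimension $3$, since $\Tr_{L/K}\colon L\to K$ is a non-zero $K$-linear functional (Dedekind's independence of characters forces $\mathrm{id}+\sigma+\sigma^2$ to be non-zero on $L$, so its kernel has codimension $1$). The parametrization $(a,b)\mapsto T_{a,b}$ is injective because $\mathrm{id},\sigma,\sigma^2$ are $L$-linearly independent in $\End_K(L)$: reading off the coefficients in $T_{a,b}=b\cdot\mathrm{id}-a\sigma+a\sigma^2$ forces $b=a=0$.

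For the invertibility analysis, the natural approach is to reduce to a single $3\times 3$ determinant. If $b=0$, then $T_{a,0}(1)=a(\sigma^2(1)-\sigma(1))=0$, so $T_{a,0}$ is non-invertible for every $a\in K$. Conversely, suppose $T_{a,b}(z)=0$ with $z\neq 0$; the subcase $a=0$ immediately forces $bz=0$ and hence $b=0$, so assume $a\neq 0$. Applying $\sigma$ and $\sigma^2$ to the relation $bz-a\sigma(z)+a\sigma^2(z)=0$, and using $\sigma(a)=a$, yields three homogeneous $L$-linear equations in the triple $(z,\sigma(z),\sigma^2(z))$ with coefficient matrix
\[
\begin{pmatrix} b & -a & a \\ a & \sigma(b) & -a \\ -a & a & \sigma^2(b) \end{pmatrix}.
\]
Since $z\neq 0$, this triple is a non-zero vector of $L^3$, so the determinant above must vanish. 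The key payoff of the computation is that this determinant evaluates to $N_{L/K}(b)+a^2\Tr_{L/K}(b)$, which collapses to $N_{L/K}(b)$ by the standing hypothesis $\Tr(b)=0$; this forces $b=0$, contrary to the assumption.

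It follows that the non-invertible elements of $\N$ are precisely $\{T_{a,0}:a\in K\}=K\cdot(\sigma^2-\sigma)$, a one-dimensional subspace. Finally, the kernel of $\sigma^2-\sigma$ on $L$ is $\{z\in L:\sigma(z)=z\}=K$, of $K$-dimension $1$, so $\sigma^2-\sigma$ has $K$-rank $3-1=2$. The main obstacle is the determinant calculation: the cancellation of the $a^2\Tr(b)$ term is precisely what the trace-zero constraint on $b$ was designed to produce, and one has to expand the matrix carefully (noting that $a\in K$ is Galois-fixed while the $\sigma^i(b)$ are not) to confirm that the only surviving term is $N_{L/K}(b)$.
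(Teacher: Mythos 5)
Your proof is correct and follows essentially the same route as the paper: apply $\sigma$ twice to the kernel relation, view the result as a homogeneous $3\times 3$ linear system in $(z,\sigma(z),\sigma^2(z))$, and observe that the determinant reduces to $N_{L/K}(b)$ once the trace-zero hypothesis kills the $a^2\Tr_{L/K}(b)$ term, forcing $b=0$. Your version is marginally more complete in that you keep $a$ general (handling $a=0$ separately) where the paper normalizes to $a=1$, and you explicitly justify the dimension-three and rank-two claims via independence of characters, which the paper leaves implicit.
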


\begin{proof}
It is clear that $\sigma^2-\sigma$ is a non-invertible element of $\N$, whose kernel is the
one-dimensional subspace of elements of $K$. Suppose now that $T_{a,b}$ is not invertible
and $z\neq 0$ is in the kernel of $T_{a,b}$. We wish to show that $b=0$ and thus
$T_{a,b}$ is a scalar multiple of $\sigma^2-\sigma$. For this purpose, it will suffice to assume that $a=1$.

We thus have 
\[
bz-\sigma(z)+\sigma^2(z)=0.
\]
We apply $\sigma$ twice to this equality and use the fact that $\sigma^3=1$. We obtain additionally
\[
z+\sigma(b)\sigma(z)-\sigma^2(z)=0
\]
and 
\[
-z+\sigma(z)+\sigma^2(b)\sigma^2(z)=0.
\]
This is a homogeneous system of linear equations in $z$, $\sigma(z)$ and $\sigma^2(z)$
whose coefficient matrix 
\[
\left(
\begin{array}
{ccc}
        b&-1&1\\
        1&\sigma(b)&-1\\
        -1&1&\sigma^2(b)   
\end{array}
\right), 
\]
must have zero determinant, since we are assuming that there is a non-trivial solution. 

We calculate that the determinant of the matrix above is
\[
b\sigma(b)\sigma^2(b)+b+\sigma(b)+\sigma^2(b).
\]
Thus, since $b+\sigma(b)+\sigma^2(b)=0$ by hypothesis, we must have $b\sigma(b)\sigma^2(b)=0$
and hence $b=0$, as required.
\end{proof}

We illustrate the ideas of the result above with a specific example in $M_3(\mathbb{F}_q)$.

\bigskip

\noindent{\bf Example 3 } Suppose that the prime power $q$ satisfies $q\equiv 1\bmod 3$. Let $b$ be an element of $\mathbb{F}_q$ that is not a cube. The polynomial $x^3-b$ is then irreducible in $\mathbb{F}_q[x]$. The elements 
\[
B=\left(
\begin{array}
{ccc}
        0&1&0\\
        0&0&1\\
        b&0&0   
\end{array}
\right), \qquad C=B^2=
\left(
\begin{array}
{ccc}
        0&0&1\\
        b&0&0\\
        0&b&0   
\end{array}
\right)
\]
span a two-dimensional subspace of $M_3(\mathbb{F}_q)$ in which every non-zero element 
has non-zero determinant. 

Let $\alpha$ be a root of $x^3-2$ in $\mathbb{F}_{q^3}$. Then $1$, $\alpha$ and $\alpha^2$ are a basis of $\mathbb{F}_{q^3}$ over $\mathbb{F}_{q}$ and we can identify $B$ with
$\alpha$ and $C$ with $\alpha^2$. Let $\sigma$ be the Frobenius mapping of $\mathbb{F}_{q^3}$ into itself and let $\tau=\sigma^2-\sigma$. Then we find that 
\[
\tau(1)=1, \qquad \tau(\alpha)=(\omega^2-\omega)\alpha,\qquad \tau(\alpha^2)=(\omega-\omega^2)\alpha^2,
\]
where $\omega$ is a primitive third root of unity in $\mathbb{F}_q$. The matrix of $\tau$ with respect to the chosen basis is thus
\[
A=\left(
\begin{array}
{ccr}
        0&0&0\\
        0&\lambda&1\\
        0&0& -\lambda
\end{array}
\right),
\]
where $\lambda=\omega^2-\omega$. 

Let $\M$ be the three-dimensional subspace of $M_3(\mathbb{F}_q)$ spanned by $A$, $B$ and $C$. We find that 
\[
P_{\M}=\det (x_1A+x_2B+x_3C)=b(x_2^3+bx_3^3).
\]
This polynomial is irreducible but factors into three conjugate linear polynomials over
$\mathbb{F}_{q^3}$. The elements of determinant 0 in $\M$ form a one-dimensional subspace spanned by $A$, as predicted by Theorem \ref{three_dimensional_subspace}.  

\begin{corollary} \label{larger_subspace_of_noninvertibles}
For each positive integer $m$, there is a subspace $\M$ of $M_{3m}(\mathbb{F}_q)$ of dimension $3m$ in which the elements of determinant
zero are a subspace of dimension $m$. The non-zero elements of this subspace have rank $2m$.
\end{corollary}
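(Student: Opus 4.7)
The plan is to generalise the construction of Theorem \ref{three_dimensional_subspace} by replacing the degree-$3$ extension $L/K$ with the tower $L' \supset \A \supset K$, where $L' = \mathbb{F}_{q^{3m}}$ and $\A = \mathbb{F}_{q^m}$. The extension $L'/\A$ is cyclic Galois of order $3$, generated for instance by $\sigma$ with $\sigma(z) = z^{q^m}$. Viewing $L'$ as a $K$-vector space of dimension $3m$ and fixing a $K$-basis identifies $\End_K(L')$ with $M_{3m}(\mathbb{F}_q)$. Define $\M$ to consist of all $K$-linear maps of the form
\[
T_{a,b}(z) = a(\sigma^2 - \sigma)(z) + bz,
\]
where $a$ ranges over $\A$ and $b$ ranges over elements of $L'$ satisfying $b + \sigma(b) + \sigma^2(b) = 0$. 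The parameter $a$ spans an $m$-dimensional $K$-subspace, and the trace-zero subspace of $L'$ over $\A$ is $2$-dimensional over $\A$, hence $2m$-dimensional over $K$, so $\dim_K \M = 3m$.

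The non-invertibility analysis mirrors the original. Suppose $T_{a,b}(z) = 0$ with $z \neq 0$. Applying $\sigma$ and $\sigma^2$ to this relation and using the fact that $\sigma(a) = a$ (since $a \in \A$) yields a homogeneous linear system in $z$, $\sigma(z)$, $\sigma^2(z)$ with coefficient matrix
\[
\begin{pmatrix} b & -a & a \\ a & \sigma(b) & -a \\ -a & a & \sigma^2(b) \end{pmatrix}.
\]
A direct expansion shows that its determinant equals $b\sigma(b)\sigma^2(b) + a^2\bigl(b + \sigma(b) + \sigma^2(b)\bigr)$, which collapses to $N_{L'/\A}(b)$ by the trace hypothesis. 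Since $L'$ is a field, this norm vanishes only when $b = 0$. Conversely, $T_{a,0} = a(\sigma^2 - \sigma)$ has kernel containing $\A$, hence is non-invertible whenever $a \neq 0$. Thus the non-invertible elements of $\M$ form exactly the $m$-dimensional subspace $\{T_{a,0} : a \in \A\}$.

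For the rank claim, fix $a \neq 0$ in $\A$. Multiplication by $a$ is a bijection of $L'$, so $T_{a,0} = a(\sigma^2 - \sigma)$ has the same rank as $\sigma^2 - \sigma = \sigma \circ (\sigma - 1)$. The kernel of $\sigma - 1$ is the fixed field $\A$, which has $K$-dimension $m$, and $\sigma$ is a bijection, so $\sigma^2 - \sigma$ has rank $3m - m = 2m$. Hence every non-zero element of the non-invertible subspace has rank $2m$, as required. The only point requiring care is checking that the determinantal identity from Theorem \ref{three_dimensional_subspace} survives passage from scalar coefficients in $K$ to coefficients in $\A$; this works precisely because $\A$ is the fixed field of $\sigma$, so the $a$-entries of the matrix are unchanged under $\sigma$ and $\sigma^2$, reproducing the norm computation verbatim.
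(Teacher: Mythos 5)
Your construction and conclusion are correct, but you arrive at them by a different route from the paper. The paper's proof is a one-line restriction-of-scalars argument: it invokes Theorem \ref{three_dimensional_subspace} with the base field taken to be $\mathbb{F}_{q^m}$ and $L=\mathbb{F}_{q^{3m}}$, obtaining a three-dimensional $\mathbb{F}_{q^m}$-subspace $\N$ of $M_3(\mathbb{F}_{q^m})$ whose non-invertible elements are a line, and then simply regards $\N$ as an $\mathbb{F}_q$-space inside $M_{3m}(\mathbb{F}_q)$, so that all dimensions and ranks get multiplied by $m$. You instead rebuild the operators $T_{a,b}$ from scratch over $\mathbb{F}_q$, with $a$ ranging over $\mathbb{F}_{q^m}$ and $b$ over the trace-zero elements of $\mathbb{F}_{q^{3m}}$ relative to $\mathbb{F}_{q^m}$, and redo the $3\times 3$ Galois determinant computation with the extra parameter $a$ carried through (which works because $\sigma$ fixes $a$, exactly as you note). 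The set of linear maps you obtain is the same as the paper's, so the two proofs construct the same object; yours is self-contained and makes explicit why the norm identity survives the enlargement of the coefficient field, while the paper's is shorter and isolates field reduction as a reusable device (it is applied again in Corollary \ref{much_bigger_subspace}). One small point you leave implicit, as does the paper: the map $(a,b)\mapsto T_{a,b}$ is injective, which follows from the linear independence of the distinct automorphisms $1,\sigma,\sigma^2$ over $\mathbb{F}_{q^{3m}}$; without this the dimension count $\dim_{\mathbb{F}_q}\M=3m$ is not justified. Your rank argument via $\sigma^2-\sigma=\sigma\circ(\sigma-1)$ and the fixed field of $\sigma$ is clean and matches the paper's reasoning after base change.
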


\begin{proof}
We work over the field $\mathbb{F}_{q^m}$ initially. Let $\N$ be a subspace of dimension three in $M_3(\mathbb{F}_{q^m})$ in which the elements
of determinant zero are a one-dimensional $\mathbb{F}_{q^m}$-subspace. Let $\M$ denote $\N$ considered as a vector space over
$\mathbb{F}_q$. Then $\dim \M=3m$ and we may consider $\M$ to be a subspace of $M_{3m}(\mathbb{F}_q)$. It is clear that
the one-dimensional $\mathbb{F}_{q^m}$-subspace of elements of zero determinant in $\N$ becomes an $m$-dimensional such subspace
in $\M$. Furthermore, since the non-zero elements of the subspace of non-invertibles have rank two over $\mathbb{F}_{q^m}$, they
have rank $2m$ over $\mathbb{F}_q$.
\end{proof}

We consider a further construction of a non-trivial subspace where the non-invertible elements are a subspace, this time consisting of
$4\times 4$ matrices in characteristic 2.

\begin{theorem} \label{example_in_four_dimensions}
Let $K$ be a field of characteristic $2$ and let $L$ be an extension field of degree four over $K$ with cyclic Galois group.
Then there exists a four-dimensional subspace $\M$ of $M_4(K)$ with the property that 
the elements of $\M$ with determinant zero are a one-dimensional subspace. $\M$ contains the identity, and the non-zero elements
of determinant zero have rank two and are nilpotent.
\end{theorem}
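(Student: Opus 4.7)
The plan is to construct $\M$ by combining a nilpotent rank-$2$ operator $\tau$, which will span the singular subspace, with multiplications by trace-zero elements of $L$. Let $\sigma$ generate $\mathrm{Gal}(L/K)$ and let $F$ denote the unique intermediate field with $[F:K]=2$; set $\tau := \sigma^2 + 1 \in \End_K(L)$. In characteristic $2$ one has $\tau^2 = \sigma^4 + 1 = 0$, and $\ker\tau = \{z \in L : \sigma^2 z = z\} = F$, so $\tau$ is nilpotent of square zero and rank $2$. Let $L_0 := \ker(\Tr_{L/K})$; this is a three-dimensional $K$-subspace of $L$, containing $1$ (because $\Tr_{L/K}(1) = 4 = 0$) and containing $F$ (because $\Tr_{L/K}|_F = 2\Tr_{F/K} = 0$). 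Writing $\mu_b$ for the $K$-linear operator $z \mapsto bz$ on $L$, I would take
\[
\M \;=\; K\tau \;+\; \{\mu_b : b \in L_0\}.
\]
Since $\tau(1) = 0$ while $\mu_b(1) = b$, this sum is direct, so $\dim_K \M = 1 + 3 = 4$ and $I = \mu_1 \in \M$.

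The heart of the argument is to show that a general element $T_{a,b} := a\tau + \mu_b$ with $a \in K$, $b \in L_0$ is non-invertible precisely when $b = 0$. Viewing $L$ as a two-dimensional $F$-module (so that both $\tau$ and $\mu_b$ are $F$-linear), a short calculation in a basis of $L$ over $F$ gives
\[
\det\nolimits_F T_{a,b} \;=\; N_{L/F}(b) + a\,\Tr_{L/F}(b),
\]
so $T_{a,b}$ is singular iff $a\,\Tr_{L/F}(b) = N_{L/F}(b)$. I would then split into two cases. If $b \in F$, one has $\Tr_{L/F}(b) = 2b = 0$ and $N_{L/F}(b) = b^2$, forcing $b = 0$. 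If instead $b \in L_0 \setminus F$, the trace-zero condition gives $\Tr_{F/K}(\Tr_{L/F}(b)) = 0$, whence $\Tr_{L/F}(b) \in \ker \Tr_{F/K} = K$; thus the left-hand side $a\,\Tr_{L/F}(b)$ lies in $K$. On the other hand, if $N_{L/F}(b)$ were also in $K$, then $b$ would satisfy the quadratic $X^2 + \Tr_{L/F}(b)\, X + N_{L/F}(b) \in K[X]$. But $b \notin F$ forces the minimal polynomial of $b$ over $K$ to have degree $4$, since $F$ is the unique proper intermediate field of $L/K$ — a contradiction. Hence $N_{L/F}(b) \notin K$ and the singularity equation has no solution $a \in K$.

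It follows that the singular elements of $\M$ are exactly $K\tau$, a one-dimensional subspace; each non-zero $a\tau$ inherits rank $2$ from $\tau$ and satisfies $(a\tau)^2 = a^2\tau^2 = 0$, so is nilpotent. The crux of the proof is the case $b \in L_0 \setminus F$, which relies on two facts whose combination is peculiar to characteristic $2$: the trace-zero hypothesis pulls $\Tr_{L/F}(b)$ into $K$ (via $\ker \Tr_{F/K} = K$, an Artin-Schreier feature), while cyclicity of the Galois group ensures $F$ is the only intermediate field. Both are needed to arrive at the minimal-polynomial contradiction.
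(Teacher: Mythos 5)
Your proposal is correct, and your subspace is in fact identical to the paper's: writing $\tau=\sigma^2+1$ and $\mu_c$ for multiplication by $c$, the paper's operators $T_{a,c}(z)=a\sigma^2(z)+(a+c)z$ are exactly your $a\tau+\mu_c$ with $\Tr_{L/K}(c)=0$, and the paper likewise observes that $\tau$ has square zero and rank two. Where you genuinely diverge is in the key step, proving that $a\tau+\mu_b$ is invertible whenever $b\neq 0$. The paper works directly with a putative kernel vector $z$: from $\sigma^2(z)=dz$ with $d=1+c$ of trace zero it extracts the Hilbert-90-type relation $\sigma^2(d)\,d=1$, substitutes into the trace-zero condition to obtain the factorization $(d+\sigma(d))(1+d^{-1}\sigma(d)^{-1})=0$, and deduces $d=1$ in either branch. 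You instead exploit $F$-linearity ($F$ the quadratic intermediate field), compute the explicit formula $\det_F T_{a,b}=N_{L/F}(b)+a\Tr_{L/F}(b)$ (which I have checked), and rule out its vanishing by pure field theory: transitivity of the trace together with $\ker\Tr_{F/K}=K$ places $a\Tr_{L/F}(b)$ in $K$, while uniqueness of the intermediate field of the cyclic quartic extension forces $N_{L/F}(b)\notin K$ for $b\notin F$. Both arguments are sound and of comparable length. Yours buys an explicit determinant formula that makes the singular locus visible at a glance and isolates precisely where characteristic $2$ (via $\Tr_{F/K}(1)=0$) and cyclicity (via the unique intermediate field) are used; the paper's computation is more self-contained, avoiding the reduction to $F$-linear algebra, and is stylistically of a piece with the determinant calculation in Theorem \ref{three_dimensional_subspace}.
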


\begin{proof}
We consider $L$ as a four-dimensional vector space over $K$ and let $\sigma$ generate the Galois group of $L$ over $K$. We consider the $K$-linear
transformations $T_{a,b}: L\to L$ given by
\[
T_{a,b}(z)=a\sigma^2(z)+(a+c)z,
\]
where $a\in K$, $c\in L$, and $c$ has trace zero under the trace mapping from $L$ to $K$ (so that $c+\sigma(c)+\sigma^2(c)+\sigma^3(c)=0$).

It is clear that if $a\neq 0$, $T_{a,0}$ annihilates the fixed field of $\sigma^2$, which has degree 2 over $K$, and has rank two. It is also easy to verify that $T_{a,0}^2=0$, which tells us that $T_{a,0}$ is nilpotent.
Our aim now is to show that $T_{a,c}$ is invertible if $c\neq 0$. For this, it will suffice to show that  $T_{1,c}$ is invertible.

Let $z\in L$ be in the kernel of $T_{1,c}$. We have then
\[
\sigma^2(z)=(1+c)z.
\]
We replace $1+c$ by $d$, which also has trace zero, and then try to show that if $z\neq 0$ and $\sigma^2(z)=dz$ for some $d$ with trace zero,
then $d=1$. 

We have $d=\sigma^2(z)z^{-1}$ and hence $\sigma^2(d)d=1$. Thus $\sigma^2(d)=d^{-1}$ and $\sigma^3(d)=\sigma(d)^{-1}$. Since $d$ has trace zero, we obtain
\[
0=d+\sigma(d)+d^{-1}+\sigma(d)^{-1}=(d+\sigma(d))(1+d^{-1}\sigma(d)^{-1}).
\]
It follows that $d=\sigma(d)$ or $d^{-1}=\sigma(d)$. 

Suppose that $d=\sigma(d)$. Then $\sigma^2(d)=d$ and hence $\sigma^2(d)d=d^2$. But we know that $\sigma^2(d)d=1$ and hence $d=1$, since we are working
in characteristic two. On the other hand, if $d^{-1}=\sigma(d)$, then $\sigma^2(d)=d$ and hence, since $d\sigma^2(d)=1$, we see that $d^2=1$.
This again leads to the conclusion that $d=1$, and hence $c=0$, as required.
\end{proof}

We consider the special case that $K=\mathbb{F}_q$ and $L=\mathbb{F}_{q^4}$, with $q$ even.
Let $\M_0$ be the subspace of elements of determinant zero in $\M$ in the theorem above. Let $\N$ be a complement to $\M_0$ in $\M$.
Then each element of $\N^\times$ has non-zero determinant. Corollary \ref{contains_subspace_of_invertibles} implies that
$P_{\M}$ is irreducible. The result that the elements of $\M_0$ are nilpotent is in the spirit of Corollary \ref{nilpotence_in_a_subspace_of_determinant_zero}, although we have proved it in a dimension that is not a prime.

The following corollary of Theorem \ref{example_in_four_dimensions} follows from the method of field reduction, as practised
in Corollary \ref{larger_subspace_of_noninvertibles}.

\begin{corollary} \label{much_bigger_subspace}
Let $q$ be a power of $2$. Then for each positive integer $m$, there is a subspace $\M$ of $M_{4m}(\mathbb{F}_q)$ of dimension $4m$ in which the elements of determinant
zero are a subspace of dimension $m$. The non-zero elements of this subspace have rank $2m$ and are nilpotent.
\end{corollary}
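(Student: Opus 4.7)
The plan is to repeat the field-reduction argument used in Corollary \ref{larger_subspace_of_noninvertibles}, but starting from the four-dimensional construction of Theorem \ref{example_in_four_dimensions} in place of the three-dimensional one of Theorem \ref{three_dimensional_subspace}. Since $q$ is a power of $2$, so is $q^m$, and $\mathbb{F}_{q^{4m}}/\mathbb{F}_{q^m}$ is a cyclic Galois extension of degree $4$ (the Galois group of any finite extension of finite fields being cyclic, generated by Frobenius). Applying Theorem \ref{example_in_four_dimensions} with $K=\mathbb{F}_{q^m}$ and $L=\mathbb{F}_{q^{4m}}$ yields a four-dimensional $\mathbb{F}_{q^m}$-subspace $\N$ of $M_4(\mathbb{F}_{q^m})$ whose determinant-zero elements form a one-dimensional $\mathbb{F}_{q^m}$-subspace $\N_0$, with every non-zero element of $\N_0$ having $\mathbb{F}_{q^m}$-rank $2$ and being nilpotent.

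Next I would restrict scalars. A choice of $\mathbb{F}_q$-basis of $\mathbb{F}_{q^m}$ identifies $\mathbb{F}_{q^m}^4$ with $\mathbb{F}_q^{4m}$, and the left multiplication action of $M_4(\mathbb{F}_{q^m})$ on this space gives an $\mathbb{F}_q$-algebra embedding of $M_4(\mathbb{F}_{q^m})$ into $M_{4m}(\mathbb{F}_q)$. Let $\M$ denote the image of $\N$ and $\M_0$ the image of $\N_0$ under this embedding. Since restriction of scalars multiplies $\mathbb{F}_{q^m}$-dimensions by $[\mathbb{F}_{q^m}:\mathbb{F}_q]=m$, we obtain $\dim_{\mathbb{F}_q}\M = 4m$ and $\dim_{\mathbb{F}_q}\M_0 = m$.

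It then remains to check that $\M_0$ is exactly the zero-determinant locus of $\M$ and that its non-zero elements have rank $2m$ and are nilpotent. Invertibility of a linear endomorphism is intrinsic: a map $T\in\N$ is $\mathbb{F}_{q^m}$-linearly bijective if and only if it is bijective as a set map, if and only if its image in $M_{4m}(\mathbb{F}_q)$ is $\mathbb{F}_q$-linearly bijective, so a member of $\M$ has non-zero $\mathbb{F}_q$-determinant precisely when the corresponding element of $\N$ has non-zero $\mathbb{F}_{q^m}$-determinant; hence the determinant-zero locus of $\M$ is $\M_0$. For rank, a non-zero $T\in\N_0$ has image of $\mathbb{F}_{q^m}$-dimension $2$ in $\mathbb{F}_{q^m}^4$, and this image has $\mathbb{F}_q$-dimension $2m$, so the corresponding element of $\M$ has $\mathbb{F}_q$-rank $2m$. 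Finally, an $\mathbb{F}_q$-algebra embedding carries any relation $T^k=0$ to its image, so every element of $\M_0$ is nilpotent.

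The main obstacle, such as it is, amounts to nothing more than the bookkeeping around restriction of scalars: the claims that rank scales by $m$, and that invertibility is preserved in both directions, are each one-line observations once the scalar-restriction viewpoint has been set up. Given that Theorem \ref{example_in_four_dimensions} has already done the real work in $M_4(\mathbb{F}_{q^m})$, the corollary is essentially a direct transcription of its conclusions via this field-reduction functor.
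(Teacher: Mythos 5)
Your proposal is correct and follows exactly the route the paper intends: the paper states that Corollary \ref{much_bigger_subspace} follows from Theorem \ref{example_in_four_dimensions} by the same field-reduction (restriction of scalars from $\mathbb{F}_{q^m}$ to $\mathbb{F}_q$) used in Corollary \ref{larger_subspace_of_noninvertibles}, which is precisely what you carry out. Your explicit verification that dimension and rank scale by $m$ and that invertibility and nilpotency are preserved is sound and, if anything, more detailed than the paper's one-line justification.
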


\section{Group actions on subspaces of matrices, centralizers and normalizers}

\noindent The subspaces of matrices, especially those whose non-zero elements have non-zero determinant, have special properties
which that make them interesting subjects for various actions of the corresponding general linear group. 

%We begin by considering an arbitrary field and then specialize to finite fields.

%\begin{lemma} \label{left_multiplication_of_subspaces}
%Let $K$ be a field and let $\M$ be a subspace of $M_n(K)$ with the property that all elements of $\M^\times$ have non-zero determinant.
%Let $A$ be an invertible element of $M_n(K)$ that satisfies $A\M=\M$. Let $T$ be the linear transformation of $\M$ determined by %multiplication by $A$.
%Then the minimal polynomial, $f$, say, of $T$ is irreducible in $K[x]$ and $\M$ is naturally a vector space over the field $K[x]/(f)$.
%\end{lemma}

%\begin{proof}
%Suppose that $f$ is irreducible and we have a proper factorization $f=f_1f_2$ in $K[x]$. Now $f_1(T)$ is not zero on $\M$, as $f_1$ has %smaller degree than $f$. Thus there exists $B\in \M$ such that $C=f_1(T)B\neq 0$. Now $C=f_1(T)B=f_1(A)B$ and $C$ is invertible. It follows
%that $f_1(A)$ is invertible. This means that $f_1(T)$ is an automorphism of $\M$. But
%\[
%f(T)=0=f_1(T)f_2(T)
%\]
%and since $f_1(T)$ is an automorphism, we may cancel this term to obtain $f_2(T)=0$. This again contradicts the minimality of the degree
%of $f$. Hence $f$ is irreducible, as required.
%end{proof}

\begin{lemma} \label{scalar_multiple_of_identity}
Let $r$ be a prime and let $\M$ be a subspace of $M_r(\mathbb{F}_q)$. Suppose that each element of $\M^\times$ has non-zero
determinant  and $r<2\dim \M<2r$. Let $A$ be an invertible element of $M_r(\mathbb{F}_q)$ that satisfies $A\M=\M$. Then if $q>r^6$,
$A$ is a scalar matrix.
\end{lemma}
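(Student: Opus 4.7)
The plan is to reduce to the case $I \in \M$, show that $A$ itself must lie in $\M$, and then use Theorem \ref{irreducible_polynomial} together with the orbit condition $A\M=\M$ to force the full commutative algebra $\mathbb{F}_q[A]$ into $\M$; comparing dimensions will give a contradiction unless $A$ is scalar.

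First I would reduce to the case where $I \in \M$. Since $\M \neq 0$ and every nonzero element is invertible, pick any $B \in \M^\times$ and replace $\M$ by $\M' = B^{-1}\M$. Then $I = B^{-1}B \in \M'$, $\dim \M' = \dim \M = d$, and every element of $(\M')^\times$ is still invertible. The condition $A\M = \M$ rewrites, after left-multiplying by $B^{-1}$ and using $\M = B\M'$, as $(B^{-1}AB)\M' = \M'$. Since $B^{-1}AB$ is a scalar matrix if and only if $A$ is, we may assume from the outset that $I \in \M$. Now the hypotheses of Theorem \ref{irreducible_polynomial} are in force: $2d > r$, $\M^\times$ consists of invertibles, and $q > r^6$ guarantees that $P_{\M}$ is irreducible but not absolutely irreducible (via Corollary \ref{subspace_of_invertibles} and Corollary \ref{irreducible_but_not_absolutely}).

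The central observation is that $A = A\cdot I \in A\M = \M$, so $A$ itself belongs to $\M$. Suppose for contradiction that $A$ is not a scalar matrix. Taking the pair $(I,A)$ of linearly independent elements of $\M^\times$ in Theorem \ref{irreducible_polynomial}, the characteristic polynomial of $I^{-1}A = A$ is irreducible of degree $r$ over $\mathbb{F}_q$. By Cayley--Hamilton this is also the minimal polynomial of $A$, so $I,A,A^2,\dots,A^{r-1}$ are linearly independent and span the $r$-dimensional commutative subalgebra $\mathbb{F}_q[A] \subseteq M_r(\mathbb{F}_q)$.

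Now I iterate the stability $A\M = \M$: we have $I, A \in \M$ by construction, and if $A^{k-1} \in \M$ then $A^k = A\cdot A^{k-1} \in A\M = \M$, so by induction $A^k \in \M$ for every $k \geq 0$. Hence $\mathbb{F}_q[A] \subseteq \M$, giving $\dim \M \geq r$. This contradicts the hypothesis $2\dim \M < 2r$, i.e.\ $\dim \M < r$, and we conclude that $A$ must be a scalar matrix. The only step with any delicacy is the initial conjugation argument ensuring that all hypotheses transfer cleanly to the translated subspace $\M'$; everything else is an immediate combination of Theorem \ref{irreducible_polynomial} with the elementary fact that $A\M = \M$ forces all polynomials in $A$ to lie in $\M$.
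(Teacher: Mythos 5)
Your proof is correct and takes essentially the same route as the paper: both invoke Theorem \ref{irreducible_polynomial} to show that a non-scalar $A$ would have an irreducible characteristic polynomial of degree $r$, and both then derive the contradiction $\dim \M \geq r$ against the hypothesis $\dim \M < r$. The only difference is cosmetic — the paper obtains $\dim \M \geq r$ from the minimal polynomial of the left-multiplication operator $T(M)=AM$ on $\M$ (so no normalization is needed), whereas you first normalize so that $I\in\M$, note $A\in\M$, and exhibit $I,A,\dots,A^{r-1}$ explicitly inside $\M$.
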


\begin{proof}
Suppose that $A$ is not a scalar matrix. Let $B$ be an element of $\M^\times$. Then $B$ and $AB$ are linearly independent (for
otherwise $A$ is a scalar matrix). It follows from Theorem \ref{irreducible_polynomial} that $B^{-1}AB$ has an irreducible characteristic
polynomial. As $B^{-1}AB$ and $A$ have the same characteristic polynomial, we deduce that $A$ has an irreducible characteristic polynomial,
$f$, say, in $\mathbb{F}_q[x]$. 

Let $T$ be the linear transformation of $\M$ induced by multiplication by $A$. Since $f(A)=0$, we have $f(T)=0$. Thus the minimal
polynomial of $T$ divides $f$. But $f$ is irreducible and hence $T$ has irreducible minimal polynomial of degree $r$. This contradicts
our hypothesis that $\dim \M<r$. We deduce that $A$ is a scalar matrix.
\end{proof} 

There are various permutation actions of the general linear group $GL(n,\mathbb{F}_q)$ on the subspaces of $M_n(\mathbb{F}_q)$. One is by left multiplication (or by right multiplication), as considered above. Another is by conjugation, an action we will examine later in this section.
In these actions, the scalar matrices act trivially and thus we have actions by the projective general linear group $PGL(n,\mathbb{F}_q)$.
Lemma \ref{scalar_multiple_of_identity} then admits a simple interpretation in these terms.

\begin{corollary} \label{projective_group_action}
Let $r$ be a prime and let $\M$ be a subspace of $M_r(\mathbb{F}_q)$. Suppose that each element of $\M^\times$ has non-zero
determinant  and $r<2\dim \M<2r$. Then if $q>r^6$, the $PGL(r,\mathbb{F}_q)$-orbit containing $\M$ is regular (its size is 
$|PGL(r,\mathbb{F}_q)|$).
\end{corollary}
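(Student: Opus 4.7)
The plan is to observe that this corollary is essentially a direct translation of Lemma \ref{scalar_multiple_of_identity} into the language of group actions. An orbit of a group action is regular precisely when the stabilizer of (any, hence every) point in the orbit is trivial. So the task reduces to identifying the stabilizer of $\M$ under the $PGL(r,\mathbb{F}_q)$-action by left multiplication and showing it is trivial.

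First I would pass to the covering action of $GL(r,\mathbb{F}_q)$. The stabilizer of $\M$ in $GL(r,\mathbb{F}_q)$ under left multiplication is, by definition, the set
\[
\mathrm{Stab}(\M) = \{A\in GL(r,\mathbb{F}_q) : A\M=\M\}.
\]
Clearly every non-zero scalar matrix lies in $\mathrm{Stab}(\M)$, since scalars preserve every subspace. The content of Lemma \ref{scalar_multiple_of_identity}, under the present hypotheses ($r$ prime, each element of $\M^\times$ invertible, $r<2\dim\M<2r$, and $q>r^6$), is precisely the converse: every $A\in\mathrm{Stab}(\M)$ is a scalar matrix. Hence $\mathrm{Stab}(\M)$ equals the subgroup of non-zero scalar matrices.

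Next I would translate this to $PGL(r,\mathbb{F}_q)$. Since $PGL(r,\mathbb{F}_q)$ is the quotient of $GL(r,\mathbb{F}_q)$ by the scalars, and the kernel of the map $GL\to PGL$ is exactly the scalar subgroup, the stabilizer of $\M$ in $PGL(r,\mathbb{F}_q)$ is the image of $\mathrm{Stab}(\M)$, which is trivial. By the orbit-stabilizer theorem, the $PGL(r,\mathbb{F}_q)$-orbit of $\M$ therefore has size $|PGL(r,\mathbb{F}_q)|$, i.e.\ is regular.

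There is no real obstacle here: the hard work is already packaged inside Lemma \ref{scalar_multiple_of_identity}. The only point to be careful about is that the hypotheses of that lemma match exactly those of the corollary (the bounds $r<2\dim\M<2r$ and $q>r^6$ are transcribed verbatim, and the non-vanishing determinant hypothesis carries over), so the invocation is clean.
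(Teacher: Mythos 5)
Your proposal is correct and is precisely the deduction the paper intends: the paper gives no separate proof of this corollary, treating it as the immediate reinterpretation of Lemma \ref{scalar_multiple_of_identity} via the orbit--stabilizer theorem, exactly as you have written it. Nothing further is needed.
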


We turn to extending this result to subspaces of dimension $r$ with the non-vanishing determinant property. As we shall see, two types
of behaviour occur.

\begin{lemma} \label{not_necessarily_scalar_multiple_of_identity}
Let $r$ be a prime and let $\M$ be a subspace of $M_r(\mathbb{F}_q)$. Suppose that each element of $\M^\times$ has non-zero
determinant  and $\dim \M=r$. Let $A$ be an invertible element of $M_r(\mathbb{F}_q)$ that satisfies $A\M=\M$. Then if $q>r^6$,
either $A$ is a scalar matrix or $A$ has irreducible characteristic polynomial and $\M=\mathbb{F}_q(A)C$ for some invertible
element $C$. In this second case, $\M$ is equivalent to the field $\mathbb{F}_{q^r}$.
\end{lemma}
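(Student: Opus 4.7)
The plan is to follow the dichotomy pattern of Lemma \ref{scalar_multiple_of_identity}, but to exploit the new equality $\dim \M = r$ to extract a cyclic-vector structure for $A$ acting on $\M$ in the non-scalar case. Since $2\dim \M = 2r > r$, every non-zero element of $\M$ is invertible, and $q > r^6$, the hypotheses of Theorem \ref{irreducible_polynomial} apply to $\M$: in particular $P_{\M}$ is irreducible but not absolutely irreducible. Assuming $A$ is not scalar, a short preliminary argument produces $B \in \M^\times$ with $B$ and $AB$ linearly independent: if instead $AM \in \mathbb{F}_q M$ for every $M \in \M^\times$, then comparing the resulting scalars on a pair of linearly independent elements $M_1, M_2 \in \M^\times$ and on their sum $M_1 + M_2 \in \M^\times$ forces a common scalar $\lambda$ with $AM = \lambda M$ for all $M$, whence $A = \lambda I$. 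For such a $B$, Theorem \ref{irreducible_polynomial} applied to the independent pair $B, AB \in \M^\times$ gives that $B^{-1}(AB) = B^{-1}AB$ has an irreducible characteristic polynomial $f$ of degree $r$, and since $A$ is similar to $B^{-1}AB$, the characteristic polynomial of $A$ is also $f$.

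Next I would let $T\colon \M \to \M$ denote left multiplication by $A$, which is a well-defined $\mathbb{F}_q$-linear operator because $A\M = \M$. Since $f(A) = 0$ and $\M$ contains the invertible element $B$, the minimal polynomial of $T$ coincides with that of $A$, namely the irreducible polynomial $f$ of degree $r = \dim \M$. An $\mathbb{F}_q$-linear operator on a space whose dimension equals the degree of its minimal polynomial is cyclic, so there exists $C \in \M$ with $\{C, AC, A^2C, \ldots, A^{r-1}C\}$ an $\mathbb{F}_q$-basis of $\M$, yielding $\M = \mathbb{F}_q[A] \cdot C = \mathbb{F}_q(A) \cdot C$, where $\mathbb{F}_q[A]$ is the subfield of $M_r(\mathbb{F}_q)$ isomorphic to $\mathbb{F}_{q^r}$ cut out by $f$.

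The hypothesis on $\M$ forces $C \in \M^\times$ to be invertible, so right-multiplication by $C^{-1}$ realises $\M C^{-1} = \mathbb{F}_q(A)$, establishing the equivalence of $\M$ with the standard embedding of $\mathbb{F}_{q^r}$ in $M_r(\mathbb{F}_q)$. The main conceptual step is the cyclic-vector argument: matching $\dim \M$ exactly with the degree of the minimal polynomial of $T$ is what converts the contradiction exploited in Lemma \ref{scalar_multiple_of_identity} (where a degree-$r$ minimal polynomial was incompatible with $\dim \M < r$) into the concrete description $\M = \mathbb{F}_q(A) C$, and this is the only point in the argument genuinely new relative to that lemma.
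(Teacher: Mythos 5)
Your proposal is correct and follows essentially the same route as the paper: show $A$ has irreducible characteristic polynomial $f$ via Theorem \ref{irreducible_polynomial} applied to an independent pair $B, AB$, then observe that left multiplication by $A$ on $\M$ has minimal polynomial $f$ of degree $r=\dim\M$, hence acts cyclically, giving $\M=\mathbb{F}_q(A)C$. The only cosmetic difference is that your preliminary argument for finding $B$ with $B, AB$ independent is longer than needed, since any single relation $AB=\lambda B$ with $B$ invertible already forces $A=\lambda I$.
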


\begin{proof}
Suppose that $A$ is not a scalar matrix. Then the proof of Lemma \ref{scalar_multiple_of_identity} shows that $A$ has irreducible characteristic polynomial, $f$, say. As before, let $T$ be the linear transformation of $\M$ induced by multiplication by $A$. Then $T$ has minimal polynomial $f$. It follows that $\M$ is a cyclic $T$-module and each element of $\M$ is expressible as $g(T)C$ for some polynomial
polynomial $g$ in $\mathbb{F}_q[x]$, and fixed element $C$ in $\M$. This means that $\M=\mathbb{F}_q(A)C$  and $\M$ is equivalent
to the subspace $\mathbb{F}_q(A)$, consisting of polynomials in $A$. This subspace is isomorphic to the field $\mathbb{F}_{q^r}$.
\end{proof}

\begin{corollary} \label{modified_projective_group_action}
Let $r$ be a prime and let $\M$ be a subspace of $M_r(\mathbb{F}_q)$. Suppose that each element of $\M^\times$ has non-zero
determinant  and $\dim\M=r$. Then if $q>r^6$, the $PGL(r,\mathbb{F}_q)$-orbit containing $\M$ is either regular or its size is
$|GL(r,\mathbb{F}_q)|/(q^r-1)$.
\end{corollary}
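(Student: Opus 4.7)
The plan is to compute the stabilizer of $\M$ in $PGL(r,\mathbb{F}_q)$ and then read off the orbit size via the orbit-stabilizer theorem. Writing $\text{Stab}_{GL}(\M) = \{A \in GL(r,\mathbb{F}_q) : A\M = \M\}$, Lemma \ref{not_necessarily_scalar_multiple_of_identity} tells us exactly what such an $A$ can look like under the hypothesis $q > r^6$: either $A$ is a scalar matrix, or $A$ has irreducible characteristic polynomial of degree $r$ and $\M = \mathbb{F}_q(A)C$ for some invertible $C \in \M$. I would split into two cases accordingly.

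In the first case, $\text{Stab}_{GL}(\M)$ equals the centre $Z = \mathbb{F}_q^\times I$ of $GL(r,\mathbb{F}_q)$, so $\text{Stab}_{PGL}(\M)$ is trivial, and the orbit has size $|PGL(r,\mathbb{F}_q)|$; this is the regular case. In the second case, fix a non-scalar $A$ with $A\M = \M$, giving $\M = \mathbb{F}_q(A)C$ with $\mathbb{F}_q(A) \cong \mathbb{F}_{q^r}$. I would then argue that every other element $B \in \text{Stab}_{GL}(\M)$ necessarily lies in $\mathbb{F}_q(A)^\times$: since $B\M = \M$ implies $B\,\mathbb{F}_q(A)\, C = \mathbb{F}_q(A)\,C$, cancelling the invertible $C$ gives $B\,\mathbb{F}_q(A) = \mathbb{F}_q(A)$, and applying both sides to the identity shows $B \in \mathbb{F}_q(A)$. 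Conversely, every non-zero element of the field $\mathbb{F}_q(A)$ obviously stabilises $\mathbb{F}_q(A)C = \M$. Thus $\text{Stab}_{GL}(\M) = \mathbb{F}_q(A)^\times$ has order $q^r - 1$.

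Passing to $PGL(r,\mathbb{F}_q) = GL(r,\mathbb{F}_q)/Z$, the image of $\text{Stab}_{GL}(\M)$ has order $(q^r-1)/(q-1)$ because the scalar matrices $Z \cong \mathbb{F}_q^\times$ are contained in $\mathbb{F}_q(A)^\times$. The orbit-stabilizer theorem then yields orbit size
\[
\frac{|PGL(r,\mathbb{F}_q)|}{(q^r-1)/(q-1)} = \frac{|GL(r,\mathbb{F}_q)|/(q-1)}{(q^r-1)/(q-1)} = \frac{|GL(r,\mathbb{F}_q)|}{q^r-1},
\]
which is the second alternative claimed in the statement.

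The one step that requires a little care is the identification of the full stabilizer in the non-trivial case. A priori, another stabilising element $B$ need only satisfy the conclusions of Lemma \ref{not_necessarily_scalar_multiple_of_identity} with respect to some possibly different presentation $\M = \mathbb{F}_q(B)C'$; one must ensure that in fact $B$ lives in the already-chosen field $\mathbb{F}_q(A)$. The short calculation above, exploiting that $\M = \mathbb{F}_q(A)C$ with $I \in \mathbb{F}_q(A)$ and $C$ invertible, is what rules out any extra stabilisers and forces $\text{Stab}_{GL}(\M) = \mathbb{F}_q(A)^\times$ rather than something larger.
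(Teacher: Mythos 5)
Your proof is correct and follows the route the paper intends: the corollary is stated without proof as an immediate consequence of Lemma \ref{not_necessarily_scalar_multiple_of_identity}, and your orbit--stabilizer computation, including the check that in the non-scalar case the full stabilizer is exactly $\mathbb{F}_q(A)^\times$ (via cancelling $C$ and evaluating at $I$), is precisely the argument being left to the reader. The care you take in ruling out stabilizing elements outside the fixed field $\mathbb{F}_q(A)$ is a worthwhile detail that the paper glosses over.
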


We turn to an investigation of conjugation action. First we make a definition in the usual spirit of action by conjugation.

\begin{definition} Let $\M$ be a non-zero subspace of $M_n(\mathbb{F}_q)$. The centralizer of $\M$ in $GL(n,\mathbb{F}_q)$
is the subset of all $C$ in $GL(n,\mathbb{F}_q)$ satisfying $CB=BC$ for all $B\in\M$. We denote this subset
by $C(\M)$. The normalizer of $\M$ in $GL(n,\mathbb{F}_q)$
is the subset of all $A$ in $GL(n,\mathbb{F}_q)$ satisfying $A\M A^{-1}=\M$. We denote this subset
by $N(\M)$. 
\end{definition}

It is clear that $C(\M)$ and $N(\M)$ are subgroups of $GL(n,\mathbb{F}_q)$, and $C(\M)$ is a normal subgroup of $N(\M)$.

We shall make the assumption that our subspace $\M$ contains the identity
element. This is reasonable, as we need to know about the characteristic polynomials of elements of $\M$, which are of course invariant under conjugation, and Theorem \ref{irreducible_polynomial} provides us with exploitable information on this subject  if $\M$ contains
the identity.

We will confine our attention to subspaces $\M$ of $M_r(\mathbb{F}_q)$, where $r$ is a prime, all elements of $\M^\times$ have non-zero
determinant, $q$ is sufficiently large, and $2\dim \M>r$. In these circumstances, our first result shows that $C(\M)$ usually consists
of scalar matrices.

\begin{theorem} \label{centralizer_structure}
Let $r$ be a prime and let $\M$ be a subspace of $M_r(\mathbb{F}_q)$ that contains the identity matrix. Suppose that each element of $\M^\times$ has non-zero
determinant  and $2\dim\M>r$. Suppose also that $q>r^6$. Then either $C(\M)$ consists of scalar matrices or there is an element $C$ in $GL(r,\mathbb{F}_q)$
with irreducible characteristic polynomial such that $\M$ is a subspace of $\mathbb{F}_q(C)$, $C(\M)$ is the subgroup
 of invertible elements of $\mathbb{F}_q(C)$, and is cyclic of order $q^r-1$.

\end{theorem}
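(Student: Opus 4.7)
The plan is to leverage Theorem \ref{irreducible_polynomial}. Under the given hypotheses, since $I \in \M$ and any $B \in \M$ not a scalar multiple of $I$ is linearly independent from $I$ in $\M^\times$, that theorem tells us $I^{-1}B = B$ has irreducible characteristic polynomial of degree $r$. Non-scalar elements exist in $\M$ because $r \geq 2$ and $2\dim \M > r$ force $\dim \M \geq 2$.

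Suppose $C \in C(\M)$ is not a scalar matrix; the goal is to show that $C$ generates a copy of $\mathbb{F}_{q^r}$ inside $M_r(\mathbb{F}_q)$ that contains $\M$. Choose any non-scalar $B \in \M$. Its characteristic polynomial is irreducible of degree $r$ and therefore coincides with its minimal polynomial, making $B$ non-derogatory. Consequently the centralizer of $B$ in $M_r(\mathbb{F}_q)$ is $\mathbb{F}_q[B]$, a field isomorphic to $\mathbb{F}_{q^r}$. Since $C$ commutes with $B$, we have $C \in \mathbb{F}_q[B]$; and since $C \notin \mathbb{F}_q I$ while $[\mathbb{F}_{q^r}:\mathbb{F}_q] = r$ is prime, $\mathbb{F}_q(C) = \mathbb{F}_q[B]$. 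In particular, the characteristic polynomial of $C$ itself is irreducible of degree $r$ and $C$ is non-derogatory.

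To show $\M \subseteq \mathbb{F}_q[C]$, take any $B' \in \M$; then $B'$ commutes with $C$, and by the same centralizer description now applied to $C$, $B' \in \mathbb{F}_q[C]$. For $C(\M)$, every nonzero element of the field $\mathbb{F}_q[C]$ is invertible and commutes with every element of $\mathbb{F}_q[C] \supseteq \M$, so $\mathbb{F}_q[C]^\times \subseteq C(\M)$. Conversely, any $D \in C(\M)$ commutes in particular with $B$ and hence lies in $\mathbb{F}_q[B] = \mathbb{F}_q[C]$. Therefore $C(\M) = \mathbb{F}_q[C]^\times$, which is cyclic of order $q^r - 1$.

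The argument rests entirely on Theorem \ref{irreducible_polynomial} together with the standard fact that the centralizer of a non-derogatory matrix is the polynomial algebra it generates. Primality of $r$ is used twice, once inside that theorem and once in the field-theoretic step to conclude that a non-scalar element of the degree-$r$ extension $\mathbb{F}_q[B]/\mathbb{F}_q$ generates the whole extension. I do not foresee a significant obstacle beyond assembling these ingredients.
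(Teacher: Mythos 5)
Your proposal is correct and follows essentially the same route as the paper: invoke Theorem \ref{irreducible_polynomial} to get irreducible characteristic polynomials for non-scalar elements of $\M$, use the fact that the centralizer of a matrix with irreducible characteristic polynomial is the field $\mathbb{F}_q[B]\cong\mathbb{F}_{q^r}$, and conclude that $\M\subseteq\mathbb{F}_q(C)$ with $C(\M)=\mathbb{F}_q(C)^\times$. You simply spell out the non-derogatory/centralizer details and the final identification of $C(\M)$ that the paper compresses into ``the rest follows from the theory of finite fields.''
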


\begin{proof}
Let $C$ be an element of $C(\M)$ that is not a scalar matrix. Then $C$ commutes with a non-scalar element of $\M$, $B$, say. 
Since $\M$ contains the identity, Theorem \ref{irreducible_polynomial}  implies that $B$ has irreducible characteristic polynomial.
This in turn implies that $C$ is a polynomial in $B$, and since $C$ is not a scalar matrix, its characteristic polynomial
is also irreducible. Thus, since $C$ commutes with all elements of $\M$, they are all polynomials in $C$, and hence
$\M$ is a subspace of $\mathbb{F}_q(C)$. The rest follows from the theory of finite fields.
\end{proof}

Our final objective is to investigate the quotient group $N(\M)/C(\M)$ when $\M$ is as above. We will show the group is small, of order
no larger than $r$ but anticipate that better information should be available. To facilitate arguments, we will assume that
$r$ is odd and relatively prime to $q-1$. In these circumstances, we have the following elementary result, whose proof is straightforward.

\begin{lemma} \label{relatively_prime}
Let $r$ be an odd prime and suppose that $r$ is relatively prime to $q-1$. Then $q-1$ is relatively prime to $(q^r-1)/(q-1)$.
\end{lemma}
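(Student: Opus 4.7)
The plan is to reduce the problem to the elementary identity $(q^r-1)/(q-1) = 1 + q + q^2 + \cdots + q^{r-1}$ and then work modulo $q-1$. Since $q \equiv 1 \pmod{q-1}$, each term $q^i$ reduces to $1$, so the entire sum reduces to $r \pmod{q-1}$. Consequently
\[
\gcd\bigl(q-1,\; (q^r-1)/(q-1)\bigr) \;=\; \gcd(q-1,\; r).
\]

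From here the hypothesis finishes the job immediately. Since $r$ is prime and coprime to $q-1$ by assumption, $\gcd(q-1, r) = 1$, and the conclusion follows. The oddness of $r$ plays no role in the lemma itself; presumably it is included in the hypothesis because the lemma will be applied in a context where the ambient assumption is that $r$ is an odd prime.

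There is no real obstacle here; the entire argument is a one-line congruence calculation followed by invoking the hypothesis. I would state the reduction explicitly as a display, note the congruence $1 + q + \cdots + q^{r-1} \equiv r \pmod{q-1}$, and conclude.
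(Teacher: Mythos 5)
Your proof is correct and is precisely the standard argument the paper has in mind (the paper omits the proof entirely, remarking only that it is "straightforward"): reducing $1+q+\cdots+q^{r-1}$ modulo $q-1$ to $r$ and invoking the coprimality hypothesis is exactly right. You are also correct that the oddness of $r$ is not needed for this lemma; it is an ambient hypothesis used elsewhere in that section.
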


Let $\M$ be a non-zero subspace of $M_r(\mathbb{F}_q)$. We set $G=N(\M)/C(\M)$. There is a homomorphism $\theta$, say, from $N(\M)$ into
the group of $\mathbb{F}_q$-automorphisms of $\M$ given by 
\[
\theta(A)B=A^{-1}BA
\]
for $A\in N(\M)$ and $B\in \M$. The kernel is $C(\M)$ and thus $\theta(G)$ is isomorphic to $G$. We can thus say that $G$ acts faithfully
as a group of $\mathbb{F}_q$-linear automorphisms of $\M$. Using this notation, we have the following technical result.

\begin{theorem} \label{fixed-point_free_action}
Let $r$ be an odd prime that is relatively prime to $q-1$. Let $\M$ be a subspace of $M_r(\mathbb{F}_q)$
that contains the identity matrix. Suppose that each element of $\M^\times$ has non-zero
determinant  and $2\dim\M>r$. Suppose also that $q>r^6$. Then no non-identity
element of $\theta(G)$ fixes a non-scalar matrix in $\M$.
\end{theorem}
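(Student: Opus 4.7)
The plan is to argue by contradiction: suppose $A\in N(\M)$ represents a non-identity element of $\theta(G)$ which fixes some non-scalar matrix $X\in\M$. The fixing condition $A^{-1}XA=X$ says $A$ commutes with $X$. Since $I,X\in\M$ are linearly independent and all hypotheses of Theorem~\ref{irreducible_polynomial} are in force, $X$ has irreducible characteristic polynomial of degree $r$, so its centralizer in $M_r(\mathbb{F}_q)$ is the field $\mathbb{F}_q(X)\cong\mathbb{F}_{q^r}$. Hence $A\in\mathbb{F}_q(X)$. Since $\theta(A)\neq 1$ means $A\notin C(\M)$, $A$ cannot be a scalar matrix; and as $r$ is prime, the only proper $\mathbb{F}_q$-subalgebra of $\mathbb{F}_q(X)$ is $\mathbb{F}_q$, so $\mathbb{F}_q(A)=\mathbb{F}_q(X)$ and $A$ also has irreducible characteristic polynomial of degree $r$.

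The next step is to exploit the second clause of Theorem~\ref{irreducible_polynomial} (with its ``fixed $A$'' taken to be $I$): at most $r$ elements of $\M$ can share a given irreducible characteristic polynomial. Since $\theta(A)$ acts by conjugation it preserves characteristic polynomials, so the $\theta(A)$-orbit of any non-scalar $Y\in\M$ has size at most $r$. Equivalently, for each non-scalar $Y$ there exists $n\in\{1,\ldots,r\}$ with $\theta(A)^n(Y)=Y$, i.e., $Y\in\M\cap C_{M_r(\mathbb{F}_q)}(A^n)$. Since the scalars in $\M$ are fixed by every $\theta(A)^n$, one obtains the covering
\[
\M=\bigcup_{n=1}^{r}\bigl(\M\cap C_{M_r(\mathbb{F}_q)}(A^n)\bigr).
\]

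I would then split into two cases according to whether some power $A^n$ with $1\leq n\leq r$ is a scalar matrix. If \emph{none} of these powers is scalar, then each $A^n$ is a non-scalar element of the prime-degree field $\mathbb{F}_q(A)$, so $\mathbb{F}_q(A^n)=\mathbb{F}_q(A)$ and therefore $C_{M_r(\mathbb{F}_q)}(A^n)=\mathbb{F}_q(A)$. The covering above then collapses to $\M\subseteq\mathbb{F}_q(A)$; but $\mathbb{F}_q(A)$ is commutative, so $A$ commutes with every element of $\M$, putting $A\in C(\M)$ and contradicting $\theta(A)\neq 1$. In the remaining case, let $n$ be the least positive integer with $A^n$ scalar; then $1<n\leq r$ is precisely the order of $A$ in the cyclic quotient $\mathbb{F}_q(A)^\times/\mathbb{F}_q^\times\cong\mathbb{F}_{q^r}^\times/\mathbb{F}_q^\times$, which has order $(q^r-1)/(q-1)$. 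Thus $n\mid (q^r-1)/(q-1)$, and by Lemma~\ref{relatively_prime}, $\gcd(n,q-1)=1$. Any prime $p\mid n$ then divides $q^r-1$ but not $q-1$, so the multiplicative order of $q$ modulo $p$ is exactly $r$; consequently $r\mid p-1$, giving $p\geq r+1$, which contradicts $p\leq n\leq r$.

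The main obstacle, I think, is spotting that the ``at most $r$ elements'' clause of Theorem~\ref{irreducible_polynomial} is the right tool: it converts the conjugation action of $A$ into a covering of $\M$ by the centralizer subspaces $\M\cap C(A^n)$, and that covering in turn makes both the commutativity argument of Case 1 and the elementary prime-divisor argument of Case 2 available. Beyond that, everything reduces to standard finite-field arithmetic.
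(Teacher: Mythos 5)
Your proof is correct, and it rests on the same two pillars as the paper's: the ``at most $r$ elements of $\M$ share a given irreducible characteristic polynomial'' clause of Theorem \ref{irreducible_polynomial}, and the arithmetic fact (via Lemma \ref{relatively_prime} and the order of $q$ modulo a prime divisor) that every prime dividing $(q^r-1)/(q-1)$ exceeds $r$. The organization, however, is genuinely different. The paper first invokes Theorem \ref{centralizer_structure} to dispose of the case $\M\subseteq\mathbb{F}_q(B)$, then takes the order $k$ of $\theta(A)$ in $W/Z$, extracts a prime $\ell\mid k$ with $\ell>r$, and exhibits a $\theta$-orbit of length $\ell$ whose members share one irreducible characteristic polynomial --- a direct violation of the at-most-$r$ bound. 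You run the implication in the opposite direction: the at-most-$r$ bound forces every orbit length to be at most $r$, which yields the covering $\M=\bigcup_{n=1}^{r}(\M\cap C(A^n))$; if no $A^n$ with $n\leq r$ is scalar this collapses $\M$ into the commutative algebra $\mathbb{F}_q(A)$ (thereby replacing the appeal to Theorem \ref{centralizer_structure} with a self-contained argument), and otherwise the least such $n$ satisfies $1<n\leq r$ while dividing $(q^r-1)/(q-1)$, so its prime divisors simultaneously exceed $r$ and are at most $r$. What your route buys is independence from Theorem \ref{centralizer_structure} and from the explicit passage to an element of prime order; what the paper's buys is a shorter path to the contradiction, since it never needs the covering. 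Both are complete; the number-theoretic core is identical.
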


\begin{proof}
Suppose that $\theta(A)$ is non-trivial and $\theta(A)$ fixes a non-scalar matrix $B$, say, in $\M$. 
Then $A$ commutes with $B$. Now $B$ has an irreducible characteristic polynomial by Theorem \ref{irreducible_polynomial}. There are two cases
to consider. In the first, easier case, $\M$ consists of polynomials in $B$ and $A$ is itself a polynomial in $B$. This means
that $A$ centralizes $\M$ and contradicts the assumption that $\theta(A)$ is non-trivial. Thus, we may assume that $C(\M)$ consists
of scalar matrices, in accordance with Theorem \ref{centralizer_structure}.

Returning to consideration of $B$, since its characteristic polynomial is irreducible,
the centralizer, $W$, say, of $B$ in $GL(r,\mathbb{F}_q)$ is cyclic of order $q^r-1$. 
%Since $A$ is an element of $W$, the order of
%$A$ divides $q^r-1$. 
Now suppose that the order of $\theta(A)$ acting as an automorphism of $\M$ is $k$. Note that we are assuming that $k>1$. Then $k$ is the smallest positive integer such that $A^k$ is an element of $C(\M)$, and hence is a scalar matrix. Let $Z$ be the group of scalar matrices, which is of course contained in $W$.
It is easy to see that $k$ is the order of the coset $AZ$ in the group $W/Z$. Since $W/Z$ has order $(q^r-1)/(q-1)$, $k$ divides
$(q^r-1)/(q-1)$. 

Let $\ell$ be a prime divisor of $k$. Certainly, $\ell$ divides $q^r-1$, but we claim that $\ell$ does not divide $q-1$. This follows 
Lemma \ref{relatively_prime}, given our hypothesis $r$ is relatively prime to $q-1$. It follows that $r$ is the order of $q$ modulo $\ell$
and thus $r$ divides $\ell-1$. An immediate consequence is that $r<\ell$, which is the main tool used in the argument.

Let $\theta(D)$ be a power of $\theta(A)$ of order $\ell$. Since $\theta(D)$ does not act trivially on $\M$, it has an orbit on
the elements of $\M$ of length $\ell$. Now if $B'$ is in such an orbit, it has an irreducible characteristic polynomial and all $\ell$
elements in the orbit have the same characteristic polynomial. But by Theorem \ref{irreducible_polynomial}, there are at most $r$ elements
in $\M$ that have a given irreducible characteristic polynomial. Since $r<\ell$, we have obtained a contradiction. We deduce that
$\theta(A)$ is trivial, as required.
\end{proof}

\begin{corollary} \label{order_is_r}
Let $r$ be an odd prime that is relatively prime to $q-1$. Let $\M$ be a subspace of $M_r(\mathbb{F}_q)$
that contains the identity matrix. Suppose that each element of $\M^\times$ has non-zero
determinant  and $\dim\M=r$. Suppose also that $q>r^6$. Then either  $N(\M)=C(\M)$ or $N(\M)/C(\M)$ has order $r$.
\end{corollary}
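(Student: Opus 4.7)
Let $G = N(\M)/C(\M)$ and identify $G$ with its faithful image $\theta(G)$ acting on $\M$. My plan is to combine the freeness statement of Theorem~\ref{fixed-point_free_action} with a refinement of the counting used in the proof of Theorem~\ref{irreducible_polynomial} to show that $|G|$ must divide $r$; since $r$ is prime, the dichotomy in the statement follows at once.

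First I would invoke Theorem~\ref{fixed-point_free_action}: no non-identity element of $\theta(G)$ fixes a non-scalar matrix in $\M$, so $G$ acts \emph{freely} on the set of non-scalar elements of $\M$. For each monic irreducible $f \in \mathbb{F}_q[x]$ of degree $r$, set
\[
S_f = \{\, B \in \M : \det(xI - B) = f\,\}.
\]
Conjugation preserves characteristic polynomials, so $S_f$ is $G$-invariant; and since $\deg f = r \geq 3$, each element of $S_f$ is non-scalar. Hence every $G$-orbit in $S_f$ has size exactly $|G|$, so $|G|$ divides $|S_f|$ whenever $S_f$ is non-empty.

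The main step, and the one I expect to carry the actual content, is to upgrade the inequality $|S_f| \leq r$ coming from Theorem~\ref{irreducible_polynomial} to the equality $|S_f| = r$ whenever $S_f \neq \emptyset$; this is where the hypothesis $\dim \M = r$ is used. I would rerun the proof of Theorem~\ref{irreducible_polynomial} with $d = r$: choose a basis $M_1 = I, M_2, \ldots, M_r$ of $\M$ and factor
\[
P_{\M} = \prod_{i=0}^{r-1}(x_1 + \omega_2 x_2 + \cdots + \omega_r x_r)^{\sigma^i}
\]
over $\mathbb{F}_{q^r}$. The non-vanishing of determinants on $\M^\times$ forces $1, \omega_2, \ldots, \omega_r$ to be $\mathbb{F}_q$-linearly independent in $\mathbb{F}_{q^r}$; since $[\mathbb{F}_{q^r}:\mathbb{F}_q] = r$, these $r$ elements now form an $\mathbb{F}_q$-basis of $\mathbb{F}_{q^r}$. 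Consequently, for a root $\alpha \in \mathbb{F}_{q^r}$ of $f$, each of its $r$ Galois conjugates $\alpha^{q^i}$ has a unique expression $\mu_1 + \sum_{j \geq 2} \mu_j \omega_j$ with $\mu_j \in \mathbb{F}_q$, and Lemma~\ref{characteristic_polynomial_from_determinantal_polynomial} shows that the corresponding $\mu_1 I + \sum_{j \geq 2}\mu_j M_j$ lies in $S_f$. This produces $r$ distinct elements of $S_f$ and, combined with the upper bound $r$ from Theorem~\ref{irreducible_polynomial}, gives $|S_f| = r$.

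To finish, since $\dim \M = r \geq 3$ there is a non-scalar $B \in \M$; applying the previous paragraph to $f = \det(xI - B)$ yields $|G| \mid |S_f| = r$, so primality of $r$ forces $|G| \in \{1, r\}$. This is precisely the stated dichotomy $N(\M) = C(\M)$ or $[N(\M) : C(\M)] = r$. I do not anticipate a genuine obstacle beyond observing that, when $d = r$, the linear-independence argument from Theorem~\ref{irreducible_polynomial} turns the counting from an inequality into an equality.
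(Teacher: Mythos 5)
Your proposal is correct, and its overall skeleton (the free action on non-scalar matrices supplied by Theorem~\ref{fixed-point_free_action}, plus the upgrade of the bound ``at most $r$ elements per irreducible characteristic polynomial'' to exactly $r$) is the same as the paper's. The one genuine difference is how the equality $|S_f|=r$ is obtained. The paper argues globally by pigeonhole: the $q^r-q$ non-scalar elements of $\M$ all have irreducible characteristic polynomials, there are exactly $(q^r-q)/r$ monic irreducible polynomials of degree $r$, and each accounts for at most $r$ elements, so each must account for exactly $r$ (with the bonus that every irreducible monic polynomial of degree $r$ occurs). You instead argue locally and constructively: when $\dim\M=r$, the linearly independent elements $1,\omega_2,\ldots,\omega_r$ form an $\mathbb{F}_q$-basis of $\mathbb{F}_{q^r}$, so each Galois conjugate $\alpha^{q^i}$ of a root of $f$ is realized by a (unique, non-scalar) element of $\M$ with characteristic polynomial $f$, giving $|S_f|\geq r$ directly. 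Both arguments are sound; yours avoids invoking the count of irreducible polynomials but requires reopening the factorization machinery of Theorem~\ref{irreducible_polynomial} (including the harmless normalization $\omega_1=1$, justified since $P_{\M}(1,0,\ldots,0)=1$ forces $N(\omega_1)=1$), whereas the paper's treats Theorem~\ref{irreducible_polynomial} as a black box. Your version also recovers, for the same reason as the paper's, the fact that every monic irreducible polynomial of degree $r$ arises as a characteristic polynomial in $\M$.
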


\begin{proof}
We have shown in Theorem \ref{irreducible_polynomial} that for any element of $\M$ that is not a scalar matrix, there are at most $r$
elements of $\M$ with the same irreducible characteristic polynomial. Since it is well known that there are exactly $(q^r-q)/r$
irreducible monic polynomials of degree $r$ over $\mathbb{F}_q$, it follows that given any non-scalar element of $\M$, there are exactly
$r$ elements of $\M$ that have the same characteristic polynomial (and all irreducible monic irreducible polynomials of degree $r$ occur).

Suppose that $G$ is non-trivial. Theorem \ref{fixed-point_free_action} implies that each  orbit of the action of $\theta(G)$ on the non-scalar
elements of $\M$ has size $|G|$. The elements in each orbit have the same characteristic polynomial. Suppose that exactly $t$ orbits
of this type consist of elements with the same characteristic polynomial. Then we must have $t|G|=r$. Since $r$ is a prime,
we conclude that $t=1$ and $|G|=r$. (Note then that the $(q^r-q)/r$ non-trivial orbits correspond to irreducible monic polynomials of degree $r$.)
\end{proof}

We would like to extend Corollary \ref{order_is_r} to subspaces of dimension less than $r$, but there is an obstacle in the way of a proof,
as we now explain.

\begin{corollary} \label{order_is_at_most_r}
Let $r$ be an odd prime that is relatively prime to $q-1$. Let $\M$ be a subspace of $M_r(\mathbb{F}_q)$
that contains the identity matrix. Suppose that each element of $\M^\times$ has non-zero
determinant  and $r<2\dim\M<2r$. Suppose also that $q>r^6$. Then  $G=N(\M)/C(\M)$ has order at most $r$. If $\M$ contains
$r$ non-scalar matrices with the same characteristic polynomial, $|G|$ equals 1 or $r$. If $\M$ does not contain
$r$ such matrices, $|G|$ is less than $r$.
\end{corollary}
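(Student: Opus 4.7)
The plan is to follow the proof of Corollary \ref{order_is_r} step by step, but keep careful track of where the dimensional hypothesis $\dim\M=r$ was used, since here only $r<2\dim\M<2r$ holds. The hypotheses $2\dim\M>r$ and $q>r^6$ are exactly what Theorem \ref{fixed-point_free_action} requires, so no non-identity element of $\theta(G)$ fixes a non-scalar matrix in $\M$. Equivalently, $\theta(G)$ acts freely on the set of non-scalar elements of $\M$, so every orbit on that set has size $|G|$. Moreover $\theta(G)$ preserves the characteristic polynomial (conjugation does), so each orbit sits inside a single fibre of the map $\M\setminus Z\to\mathbb{F}_q[y]$ sending $B\mapsto\det(yI-B)$.

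By Theorem \ref{irreducible_polynomial} (valid since $2\dim\M>r$), every non-scalar element of $\M$ has an irreducible characteristic polynomial of degree $r$, and at most $r$ elements of $\M$ share any given such polynomial. The hypothesis $2\dim\M>r\geq 3$ forces $\dim\M\geq 2$, so $\M$ genuinely contains non-scalar matrices; picking one and letting $f$ be its characteristic polynomial, the fibre $S_f\subseteq\M$ is $\theta(G)$-invariant, has cardinality at most $r$, and is partitioned into free $\theta(G)$-orbits, each of size $|G|$. In particular $|G|\leq r$, which gives the first assertion.

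For the dichotomy, split on whether some fibre $S_f$ attains the maximum size $r$. If yes, then $|G|$ divides $|S_f|=r$, and primality of $r$ forces $|G|\in\{1,r\}$. If no, then every fibre (and hence every orbit) has size at most $r-1$, so $|G|\leq r-1<r$.

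The obstacle, flagged in the text just before the statement, is that the cleaner dimensional counting of Corollary \ref{order_is_r} is unavailable here: when $\dim\M=r$ the set $\M\setminus Z$ has cardinality $q^r-q$ and is partitioned into exactly $(q^r-q)/r$ fibres, forcing each fibre to have size $r$ and thereby guaranteeing the orbit bound is realised. When $\dim\M<r$, some irreducible characteristic polynomials may simply not occur at all in $\M$, and those that do occur need not saturate the bound of $r$ representatives; hence we cannot in general promote $|G|\leq r$ to a single dichotomy, and the statement splits naturally into the two cases written.
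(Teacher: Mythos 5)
Your proposal is correct and follows essentially the same route as the paper: both rest on Theorem \ref{fixed-point_free_action} to get free $\theta(G)$-orbits on non-scalar matrices, use Theorem \ref{irreducible_polynomial} to bound each characteristic-polynomial fibre by $r$ (hence $|G|\leq r$), and then split on whether some fibre attains size $r$ to get the divisibility dichotomy. The paper's version merely compresses these steps by citing the proof of Corollary \ref{order_is_r}; your write-up spells out the same counting explicitly.
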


\begin{proof}
The proof of the previous corollary shows that a non-trivial $\theta(G)$-orbit has size $|G|$. Since such orbits consist of elements
of the same characteristic polynomial, Theorem \ref{irreducible_polynomial} implies that $|G|\leq r$. If there is an irreducible monic
polynomial of degree $r$ that is the characteristic polynomial of $r$ elements of $\M$, then the argument of the previous
proof implies that if $|G|>1$, then $|G|=r$. Otherwise, $|G|<r$.
\end{proof}

The question of whether we can have $G$ non-trivial but of order less than $r$ has not been resolved. It is easy to see that $|G|$ is odd,
and if $G$ is abelian, it is cyclic.

\section{Subspaces with square determinants}

\noindent Let $K$ be a field and let $K[x_1, \ldots, x_d]$ be the polynomial ring in the $d$ independent indeterminates $x_1$, \dots, $x_d$ over $K$. Let $y$ be a further indeterminate independent of the $x_i$. 

\begin{lemma} \label{irreducible_if_not_a_square}
Let $f$ be a non-constant polynomial in $R=K[x_1, \ldots, x_d]$. Then $y^2-f$ is reducible in $R[y]$ if and only if $f=g^2$ for some
$g$ in $R$.
\end{lemma}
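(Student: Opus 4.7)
The plan is to prove the two implications separately, with the forward direction relying on Gauss's lemma.

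The easy direction $(\Leftarrow)$ is the explicit factorization $y^2 - g^2 = (y-g)(y+g)$. Since $f$ is non-constant, $g$ is non-constant as an element of $R$, and in any case both factors have positive degree in $y$, so they are non-units of $R[y]$ (whose units are just $K^\times$). Hence $y^2 - f$ is reducible.

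For the main direction $(\Rightarrow)$, I would suppose $y^2 - f = p(y)\,q(y)$ is a non-trivial factorization in $R[y]$. The key observation is that $y^2 - f$, viewed as a polynomial in $y$, is monic, hence primitive over the UFD $R = K[x_1, \ldots, x_d]$. By Gauss's lemma, any factor is primitive (up to a unit of $R$), so in particular neither $p$ nor $q$ can be a non-unit constant in $R$. Both therefore have positive $y$-degree, and since the degrees add to $2$, each must have $y$-degree exactly $1$. Write $p(y) = \alpha y + a$ and $q(y) = \beta y + b$ with $\alpha,\beta \in R \setminus \{0\}$ and $a, b \in R$. Matching leading coefficients yields $\alpha\beta = 1$, so $\alpha, \beta \in K^\times$; after replacing $p$ by $\alpha^{-1}p$ and $q$ by $\alpha q$ we may assume $\alpha = \beta = 1$. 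Expanding $(y+a)(y+b) = y^2 + (a+b)y + ab$ and comparing with $y^2 - f$ gives $a + b = 0$ and $ab = -f$, so $f = -ab = a^2$, as required.

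The only step that needs genuine care is the exclusion of factorizations in which a non-unit constant of $R$ is pulled out of one side; that is exactly what Gauss's lemma together with the UFD property of $R = K[x_1, \ldots, x_d]$ supplies. Once the factorization is forced into the shape (linear in $y$)$\,\cdot\,$(linear in $y$), the rest is a routine comparison of coefficients, and no further structural input is needed.
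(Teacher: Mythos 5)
Your proof is correct and follows essentially the same route as the paper: reduce to a factorization into two factors linear in $y$, then compare coefficients to exhibit $f$ as a square. The only difference is that you justify (via primitivity/Gauss's lemma) why neither factor can be a non-unit constant of $R$ — a step the paper asserts without comment — which is a welcome but minor refinement rather than a different argument.
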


\begin{proof}
Clearly, if $f=g^2$ for some $g$ in $R$, we can factor $y^2-f$ as $(y-g)(y+g)$. Conversely, suppose that
\[
y^2-f=FG,
\]
where $F$ and $G$ are non-constant polynomials in $R[y]$. Working in $R[y]$, this factorization is only possible if $F$ and $G$ are polynomials of degree one in $R[y]$, say
\[
F=ay+F_1,\quad G=by+G_1,
\]
where $a$ and $b$ are elements of $K$, and $F_1$ and $G_1$ are both in $R$.

Comparing terms in the equality $y^2-f=FG$, we obtain
\[
ab=1,\quad aG_1+bF_1=0,\quad F_1G_1=-f.
\]
We deduce that $G_1=-b^2F_1$ and thus $f=b^2F_1^2$ is a square in $R$, as required.

\end{proof}

We specialize to the case that $K=\mathbb{F}_q$, where $q$ is a power of an odd prime, and investigate if the polynomial
$y^2-f$ considered above is absolutely irreducible.

\begin{theorem} \label{absolutely_irreucible}
Let $f$ be a polynomial in $\mathbb{F}_q[x_1, \ldots, x_d]$. Suppose that 
$y^2-f$ is reducible in 
$\overline{\mathbb{F}}_q[x_1, \ldots, x_d,y]$, where $\overline{\mathbb{F}}_q$ denotes the algebraic closure of $\mathbb{F}_q$. Then
$f=g^2$ for some polynomial $g\in E[x_1, \ldots, x_d]$, where $E=\mathbb{F}_q$ or $\mathbb{F}_{q^2}$. In the first case, $f$ is the square
of a polynomial in $\mathbb{F}_q[x_1, \ldots, x_d]$. In the second, $g^\sigma=-g$, where $\sigma$ is the Frobenius $q$-th power mapping applied
in $\mathbb{F}_{q^2}$.
\end{theorem}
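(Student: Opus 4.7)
The plan is to apply Lemma \ref{irreducible_if_not_a_square} over $\overline{\mathbb{F}}_q$ to produce a square root of $f$, and then run a short Galois descent with the Frobenius to pin down the field containing its coefficients.

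First I would apply Lemma \ref{irreducible_if_not_a_square} with $K=\overline{\mathbb{F}}_q$. The hypothesis that $y^2-f$ is reducible in $\overline{\mathbb{F}}_q[x_1,\ldots,x_d,y]$ then produces a polynomial $g\in\overline{\mathbb{F}}_q[x_1,\ldots,x_d]$ with $f=g^2$. Since $g$ has only finitely many coefficients, they all lie in some finite extension $\mathbb{F}_{q^N}$ of $\mathbb{F}_q$, and we may regard $g$ as an element of $\mathbb{F}_{q^N}[x_1,\ldots,x_d]$.

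Next, let $\sigma$ denote the Frobenius $a\mapsto a^q$ acting coefficient-wise on polynomials over $\overline{\mathbb{F}}_q$. Because the coefficients of $f$ lie in $\mathbb{F}_q$, we have $f^\sigma=f$, and hence
\[
(g^\sigma)^2=(g^2)^\sigma=f^\sigma=f=g^2.
\]
Since $\overline{\mathbb{F}}_q[x_1,\ldots,x_d]$ is an integral domain and $q$ is odd (so that $1\neq -1$), this forces $g^\sigma=g$ or $g^\sigma=-g$. In the first case, every coefficient of $g$ is fixed by $\sigma$, so $g\in\mathbb{F}_q[x_1,\ldots,x_d]$, which is the first conclusion of the theorem. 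In the second case, applying $\sigma$ once more yields $g^{\sigma^2}=-g^\sigma=g$, so the coefficients of $g$ are fixed by $\sigma^2$ and therefore lie in the fixed field $\mathbb{F}_{q^2}$; this gives the second conclusion together with the relation $g^\sigma=-g$.

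There is no serious obstacle here: the argument is essentially automatic once Lemma \ref{irreducible_if_not_a_square} is invoked over the algebraic closure. The only point worth flagging is that odd characteristic is genuinely used, since in characteristic $2$ the two signs $\pm 1$ coincide and the dichotomy collapses.
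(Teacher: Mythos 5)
Your proposal is correct and follows essentially the same route as the paper: invoke Lemma \ref{irreducible_if_not_a_square} over $\overline{\mathbb{F}}_q$ to get $f=g^2$, descend the coefficients to a finite extension, and use the Frobenius together with the integral-domain factorization $(g-g^\sigma)(g+g^\sigma)=0$ to conclude $g^\sigma=\pm g$ and hence $E=\mathbb{F}_q$ or $\mathbb{F}_{q^2}$. The only cosmetic difference is that you work directly with the Frobenius, while the paper picks an arbitrary Galois element not fixing some coefficient of $g$; the substance is identical.
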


\begin{proof}
Suppose that $y^2-f$ is reducible in $\overline{\mathbb{F}}_q[x_1, \ldots, x_d,y]$. Then by Lemma \ref{irreducible_if_not_a_square}, $f=g^2$
for some $g$ in $\overline{\mathbb{F}}_q[x_1, \ldots, x_d]$. Now the coefficients of $g$ lie in the algebraic closure and hence are all algebraic over $\mathbb{F}_q$. The coefficients therefore lie in some finite field $E$ that is an extension of $\mathbb{F}_q$, and thus
$g\in E[x_1, \ldots, x_d]$. The case where $E=\mathbb{F}_q$ is trivial, so we will assume that $E>\mathbb{F}_q$.

Let $\sigma$ be an element of the Galois group of $E$ over $\mathbb{F}_q$ that does not fix some coefficient of $g$. Given $h\in 
E[x_1, \ldots, x_d]$, let $h^\sigma$ denote the polynomial obtained from $h$ by applying $\sigma$ to the coefficients but not to the variables. Then we have
\[
f=f^\sigma=g^2=(g^\sigma)^2.
\]
It follows that 
\[
g^2-(g^\sigma)^2=0=(g-g^\sigma)(g+g^\sigma).
\]
Thus since $g$ and $g^\sigma$ belong to an integral domain, either $g=g^\sigma$ or $g=-g^\sigma$, and the first possibility is already excluded. We deduce that $g^\sigma=-g$ and hence $g^{\sigma^2}=g$. Thus the coefficients of $g$ lie in $\mathbb{F}_{q^2}$ and $g$
is an element of $\mathbb{F}_{q^2}[x_1, \ldots, x_d]$.

\end{proof}

Our intention is to apply this theorem, together with the Lang-Weil theorem, to show that polynomials are squares if they only take square values on evaluation. 

\begin{theorem} \label{square_polynomial}
Let $q$ be a power of an odd prime. Let $f\in \mathbb{F}_q[x_1, \ldots, x_d]$ be a non-zero polynomial of even degree $n$. Suppose that when
evaluated on $\mathbb{F}_q^d$, $f$ takes only square values (possibly zero).
Then provided $q>n^6$, $f$ is the square of a 
 polynomial in $\mathbb{F}_q[x_1, \ldots, x_d]$.
\end{theorem}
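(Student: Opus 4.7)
The plan is to argue by contradiction. Suppose $f$ is not a square in $\mathbb{F}_q[x_1,\ldots,x_d]$. Consider the auxiliary polynomial $F(x_1,\ldots,x_d,y)=y^2-f$ of degree $n$ in $d+1$ variables. Lemma \ref{irreducible_if_not_a_square} tells us $F$ is already irreducible over $\mathbb{F}_q$, and Theorem \ref{absolutely_irreucible} then leaves exactly two possibilities to rule out: either $F$ is absolutely irreducible, or $f=g^2$ for some $g\in\mathbb{F}_{q^2}[x_1,\ldots,x_d]$ satisfying $g^\sigma=-g$ (the $E=\mathbb{F}_q$ option of that theorem is excluded by our standing assumption).

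I would dispose of the second alternative first by a direct computation. Choose $\alpha\in\mathbb{F}_{q^2}$ with $\alpha^2=\beta$ a non-square in $\mathbb{F}_q$; since $q$ is odd, $\sigma(\alpha)=\alpha^q=-\alpha$. Writing $g=h_1+\alpha h_2$ with $h_1,h_2\in\mathbb{F}_q[x_1,\ldots,x_d]$, the relation $g^\sigma=-g$ forces $h_1=0$, hence $f=\beta h_2^2$ with $h_2$ a non-zero polynomial of degree $n/2$. At any point $a\in\mathbb{F}_q^d$ with $h_2(a)\neq 0$, the value $f(a)=\beta h_2(a)^2$ is a non-square in $\mathbb{F}_q^\times$, contradicting the hypothesis on $f$. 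The existence of such a point is immediate since a non-zero polynomial of degree $n/2$ on $\mathbb{F}_q^d$ has at most $(n/2)q^{d-1}$ zeros, which is far fewer than $q^d$ once $q>n^6$.

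The absolutely irreducible case is the heart of the argument and is where the Lang--Weil theorem enters. I would count $\mathbb{F}_q$-points on $F=0$ in two ways. For each $a\in\mathbb{F}_q^d$ the hypothesis says $f(a)$ is a square, so the number of $y\in\mathbb{F}_q$ with $y^2=f(a)$ is $2$ when $f(a)\neq 0$ and $1$ when $f(a)=0$; letting $Z$ denote the number of zeros of $f$ on $\mathbb{F}_q^d$, the total is $N=2q^d-Z$. On the other hand, the Cafure--Matera effective Lang--Weil bound applied to the absolutely irreducible $F$ of degree $n$ in $d+1$ variables, combined with the elementary estimate from the proof of Theorem \ref{Lang_Weil}, yields $|N-q^d|<(3/4)q^d$ provided $q>n^6$ and $n\geq 4$. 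Since $N-q^d=q^d-Z$, this forces $Z>q^d/4$; but once again a non-zero polynomial of degree $n$ has at most $nq^{d-1}$ zeros, giving $q<4n$ and contradicting $q>n^6$.

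The main obstacle is essentially the same one flagged for Theorem \ref{Lang_Weil}: the Cafure--Matera error term is only comfortably smaller than $q^d$ once $n\geq 4$, so the $n=2$ edge case needs a separate direct argument (for instance by diagonalizing a quadratic polynomial and observing that a non-square quadratic must attain a non-square value on a sufficiently large $\mathbb{F}_q$). Everything else is bookkeeping, and the decisive structural input is the dichotomy supplied by Theorem \ref{absolutely_irreucible}, which reduces the global hypothesis on values of $f$ to either a point-count governed by Lang--Weil or an elementary field-theoretic calculation.
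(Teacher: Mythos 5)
Your proposal follows essentially the same route as the paper: the dichotomy from Theorem \ref{absolutely_irreucible} applied to $y^2-f$, a Cafure--Matera point count on $y^2=f$ played off against Ore's bound in the absolutely irreducible case, and the observation that $g^\sigma=-g$ is incompatible with $f$ taking only square values in the conjugate case (your explicit reduction to $f=\beta h_2^2$ with $\beta$ a non-square is just a concrete form of the paper's evaluation argument). The one point of divergence is that you flag $n=2$ as needing a separate elementary treatment, whereas the paper claims the same explicit numerical manipulations that settle $n=6$ and $n=4$ also settle $n=2$; your caution is warranted, since for $n=2$ the error term $5\cdot 2^{13/3}q^{d-1}$ need not be dominated when $q$ is only slightly larger than $64$.
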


\begin{proof}
%Suppose by way of contradiction that $f$ is not the square of a polynomial in
%$\mathbb{F}_q[x_1, \ldots, x_d]$. 

Let us first note that as $f$ is non-zero, it certainly takes non-zero values when evaluated on $\mathbb{F}_q^d$. This follows from Ore's
theorem, \cite{LN}, Theorem 6.13, given our hypothesis that $q>n^6$ ($q>n$ will suffice for this).
Now we consider the consequences of assuming that
the polynomial $F=y^2-f$ in $\mathbb{F}_q[x_1, \ldots,x_d,y]$ is absolutely irreducible.
 Suppose 
$f$ has exactly $r$ zeros in $\mathbb{F}_q^d$ and $F$ has $N$ zeros in
$\mathbb{F}_q^{d+1}$. Then we have  $N=r+2(q^d-r)=2q^d-r$, given the hypothesis that $f$ takes only square values. Now by the same theorem of Ore, 
since $f$ is non-zero, $r\leq nq^{d-1}$. Thus we have
\[
N\geq 2q^d-nq^{d-1}.
\]
It follows from the effective form of the Lang-Weil theorem due to Cafure and Matera
that
\[
N\leq q^d+(n-1)(n-2)q^{d-1/2}+5n^{13/3}q^{d-1}.
\]
This leads to the inequality
\[
2q^d-nq^{d-1}\leq q^d+(n-1)(n-2)q^{d-1/2}+5n^{13/3}q^{d-1}
\]
and when we divide by $q^{d-1/2}$, we obtain
\[
q^{1/2}\leq nq^{-1/2}+(n-1)(n-2)+5n^{13/3}q^{-1/2}.
\]

We now apply the hypothesis that $q\geq n^6$. The inequality becomes
\[
n^3\leq q^{1/2}\leq 1+(n-1)(n-2)+5n^{4/3}\leq 6n^2.
\]
This is certainly a contradiction if $n>6$. We can now argue a little more precisely when
$n=6$ or $n=4$. 

When $n=6$, our inequality is
\[
6^3=216\leq 1 +15+5\times 6^{4/3}
\]
and since $6^{4/3}<12$, we have another contradiction. The same explicit type of manipulations also eliminate the remaining possibilities that $n=4$ or $n=2$. 

Thus $F=y^2-f$ is not absolutely irreducible. As we have seen, this can happen in two ways. Either $f$ is the square of a polynomial
in $\mathbb{F}_q[x_1, \ldots, x_d]$, which is what we want to prove, or $f=g^2$, where $g$ is a polynomial
in $\mathbb{F}_{q^2}[x_1, \ldots, x_d]$, and $g^\sigma=-g$ for the Frobenius automorphism $\sigma$ of
$\mathbb{F}_{q^2}$. Consider the second possibility. Then as $f=g^2$ and $f$ takes only square values on $\mathbb{F}_q^d$, $g$ takes values
in $\mathbb{F}_q$ when evaluated on $\mathbb{F}_q^d$. This is incompatible with $g^\sigma=-g$, given that $g$ is not identically
zero on $\mathbb{F}_q^d$ (since $f$ is not identically zero). Thus $f$ is a square.
\end{proof}

We should remark that results like Theorem \ref{square_polynomial} do require some hypothesis on the size of $q$ compared with the degree of the polynomial. Let us illustrate this phenomenon with an example. Let $p$ be an odd prime. It is well known that
the Artin-Schreier polynomial $f=x^p-x+\lambda$, where $\lambda$ is a non-zero element of
$\mathbb{F}_p$ is irreducible in $\mathbb{F}_p[x]$. Now, provided $p\equiv 3 \bmod 4$,
the polynomial $x^{2p}-x^2+\lambda$ is also irreducible if and only if $\lambda$ is a square
(when $p\equiv 1 \bmod 4$,  $x^{2p}-x^2+\lambda$ is irreducible if and only if $\lambda$ is a non-square). Thus, the polynomial $x^{2p}-x^2 +1$ is irreducible provided $p\equiv 3 \bmod 4$. 

We now homogenize the polynomial to form
\[
F(x,y)=x^{2p}-x^2y^{2p-2}+y^{2p}.
\] 
We claim that $F(\lambda, \mu)$ is a square for all $(\lambda, \mu)\in \mathbb{F}_p$.
This is clear if $\lambda=0$ or $\mu=0$. Suppose then that both $\lambda$ and $\mu$ are non-zero. Then $\lambda^{2p}=\lambda^2$, $\mu^{2p-2}=1$ and $\mu^{2p}=\mu^2$. Thus
\[
F(\lambda, \mu)=\mu^2
\]
is a square, as claimed. Since $F$ is irreducible when $p\equiv 3 \bmod 4$, it is certainly
not the square of a polynomial.

\begin{corollary} \label{subspace_of_squares}
Let $\M$ be a $d$-dimensional subspace of $n\times n$ matrices over $\mathbb{F}_q$
where $q$ is odd and $d>0$. Suppose that the determinant of each element of $\M^\times$ is a non-zero square in $\mathbb{F}_q$. Then $n$ is even, say $n=2m$, and if $q\geq n^6$, we have
$d\leq m$. 
\end{corollary}

\begin{proof}
We first show that $n$ is even. Let $P_{\M}$ be the determinantal polynomial of $\M$.
Let $(\lambda_1, \ldots, \lambda_d)$ be a non-zero element
of $\mathbb{F}_q^d$. Then $P_{\M}(\lambda_1,\ldots, \lambda_d)$ is a non-zero square. Now let
$\lambda$ be a non-square in $\mathbb{F}_q$. Since $P_{\M}$ is defined as a determinant, it is homogeneous and hence
\[
P_{\M}(\lambda\lambda_1, \ldots, \lambda\lambda_d)=\lambda^nP_{\M}(\lambda_1, \ldots, \lambda_d).
\]
Since $P_{\M}(\lambda\lambda_1, \ldots, \lambda\lambda_d)$ is also a non-zero square, it follows
that $\lambda^n$ is also a square. This implies that $n$ is even, since $\lambda^n$ is a non-square if $n$ is odd. 

Now $P_{\M}$ is a homogeneous polynomial of degree $n$ with no non-trivial zeros, and its values are squares. It follows 
from Theorem \ref{square_polynomial} that if $q\geq n^6$, $P_{\M}=g^2$ for some polynomial $g$.
Suppose if possible that $d>m$. Then since $g$ has degree $m$ and $g(0, \ldots, 0)=0$,
$g$ has a non-trivial zero, by Chevalley's theorem, \cite{LN}, Theorem 6.6. But this implies that $f$ also has a non-trivial zero and this is not true. Thus, $d\leq m$.
\end{proof}

\begin{corollary} \label{characteristic_polynomial_a_square}
Let $\M$ be a subspace of $M_{2m}(\mathbb{F}_q)$ with the property that $\det M$ is a square (possibly zero) for all elements $M$ in
$\M$. Suppose that $\M$ contains the identity matrix. Then if $q>64m^6$, the characteristic polynomial of each element of $\M$ is the square
of a monic polynomial in $\mathbb{F}_q[x]$.
\end{corollary}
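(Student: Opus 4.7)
The idea is to fix an arbitrary $A\in\M$ and apply Theorem \ref{square_polynomial} to the one-variable polynomial
\[
f(y)=\det(yI-A)\in\mathbb{F}_q[y],
\]
which is monic of even degree $n=2m$, hence non-zero.

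The key observation is that for each $\lambda\in\mathbb{F}_q$, the matrix $\lambda I-A$ lies in $\M$: this uses the hypothesis $I\in\M$, which guarantees that $\mathbb{F}_q I+\mathbb{F}_q A\subseteq\M$. By the standing assumption on $\M$, the scalar $f(\lambda)=\det(\lambda I-A)$ is therefore a square in $\mathbb{F}_q$ (possibly zero). Thus $f$ is a non-zero polynomial in the single variable $y$, of even degree $n=2m$, taking only square values on $\mathbb{F}_q^1$.

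Since $q>64m^6=(2m)^6=n^6$, I would then apply Theorem \ref{square_polynomial} in the case $d=1$ to conclude that $f(y)=g(y)^2$ for some $g\in\mathbb{F}_q[y]$. Comparing leading coefficients, $g$ has degree $m$ with leading coefficient $\pm 1$; replacing $g$ by $-g$ if necessary, we may take $g$ monic. This realises the characteristic polynomial of $A$ as the square of a monic polynomial in $\mathbb{F}_q[x]$, as required.

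There is no real obstacle in this argument, since all the technical content has already been carried out in Theorem \ref{square_polynomial}. The only point to notice is the correct specialization: treat the characteristic polynomial as a polynomial in the single indeterminate $y$, and exploit the assumption $I\in\M$ to ensure that each evaluation $f(\lambda)$ is the determinant of a genuine element of $\M$, so the hypothesis on squares transfers cleanly from $\M$ to the polynomial $f$.
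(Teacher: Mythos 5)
Your proof is correct, but it takes a genuinely different route from the paper's. The paper applies Theorem \ref{square_polynomial} once, to the full $d$-variable determinantal polynomial $P_{\M}$ computed with respect to a basis $M_1=I$, $M_2,\dots,M_d$, obtaining a single polynomial $Q$ with $P_{\M}=Q^2$, and then reads off every characteristic polynomial as $Q(x-\lambda_1,-\lambda_2,\ldots,-\lambda_d)^2$ via the substitution of Lemma \ref{characteristic_polynomial_from_determinantal_polynomial}. You instead fix $A$, restrict to the pencil $\mathbb{F}_qI+\mathbb{F}_qA$, and apply Theorem \ref{square_polynomial} in the univariate case $d=1$ to $f(y)=\det(yI-A)$. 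This is legitimate: the theorem is stated for arbitrary $d\geq 1$, its proof (Ore's bound together with the Cafure--Matera estimate, here for the plane curve $y^2=f(x)$) goes through when $d=1$, and your observation that $\lambda I-A\in\M$ for all $\lambda$ correctly transfers the square-value hypothesis to $f$; the normalization of $g$ to a monic polynomial via $\pm 1$ is also fine in odd characteristic. Your version even uses slightly less, since it only needs the square-determinant hypothesis on the at most two-dimensional subspace spanned by $I$ and $A$. What it does not produce is the uniform square root $Q$ of $P_{\M}$ --- the Pfaffian-like object that the paper's argument implicitly constructs and which underlies the remark following Corollary \ref{no_identity_but_subspace_of_squares}. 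One caveat shared by both arguments: Theorem \ref{square_polynomial} assumes $q$ odd, a hypothesis carried implicitly from the start of that section, so your invocation inherits the same implicit assumption.
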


\begin{proof}
Let $d=\dim \M$ and let $M_1=I$, \dots, $M_d$ be a basis of $\M$. We then calculate the determinantal polynomial of $\M$ with respect to this basis.
We have in this case
\[
\det (x_1I+x_2M_2+\cdots +x_dM_d)=P_{\M}(x_1, x_2, \ldots, x_d).
\]
Let $M=\lambda_1 I+\cdots +\lambda_dM_d$. 
Thus if we replace $x_1$ by $x-\lambda_1$, $x_2$ by $-\lambda_2$, \dots, $x_d$ by $-\lambda_d$, we obtain
\[
\det (xI-M)=P_{\M}(x-\lambda_1, -\lambda_2, \ldots, -\lambda_d).
\]
Since $\det \M$ is a square for all $M$ in $\M$, and we are assuming that $q>64m^6$, Theorem \ref{square_polynomial} implies that
$P_{\M}=Q^2$ for some polynomial in $\mathbb{F}_q[x_1, \ldots, x_d]$. Then we obtain
\[
\det (xI-M)=Q(x-\lambda_1, -\lambda_2, \ldots, -\lambda_d)^2,
\]
as required.
\end{proof}

The following observation extends this result to subspaces that have the square determinant property but do not necessarily contain the identity matrix.

\begin{corollary} \label{no_identity_but_subspace_of_squares}
Let $\M$ be a subspace of $M_{2m}(\mathbb{F}_q)$ with the property that $\det M$ is a square (possibly zero) for all elements $M$ in
$\M$. Suppose also that $\M$ contains elements with non-zero determinant. Then if $q>64m^6$, given elements $M$ and $N$ in $\M$, with $\det M\neq 0$,
the characteristic polynomial of $M^{-1}N$ is the square
of a monic polynomial in $\mathbb{F}_q[x]$.
\end{corollary}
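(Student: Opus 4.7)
The plan is to reduce to Corollary \ref{characteristic_polynomial_a_square} by left-translating $\M$ by the inverse of an invertible element. Specifically, I would define $\N = M^{-1}\M = \{M^{-1}X : X \in \M\}$. This is again a subspace of $M_{2m}(\mathbb{F}_q)$ of the same dimension as $\M$, and it contains the identity matrix because $M \in \M$ gives $I = M^{-1}M \in \N$.

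The key step is to verify that $\N$ still has the square-determinant property. For an arbitrary element $A = M^{-1}X \in \N$ with $X \in \M$, we compute
\[
\det A = \det(M)^{-1}\det(X).
\]
By hypothesis, $\det M$ is a non-zero square in $\mathbb{F}_q$, so $\det(M)^{-1}$ is also a non-zero square. Meanwhile $\det X$ is a square (possibly zero) by hypothesis on $\M$. Since the product of two squares in $\mathbb{F}_q$ is a square, $\det A$ is a square (possibly zero). Thus $\N$ satisfies all the hypotheses of Corollary \ref{characteristic_polynomial_a_square}: it contains $I$, each of its elements has square determinant, and $q > 64m^6$ still holds.

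Applying Corollary \ref{characteristic_polynomial_a_square} to $\N$, we conclude that the characteristic polynomial of every element of $\N$ is the square of a monic polynomial in $\mathbb{F}_q[x]$. Since $M^{-1}N$ belongs to $\N$ (as $N \in \M$), its characteristic polynomial is such a square, which is the desired conclusion. There is no real obstacle here beyond the verification that left multiplication by $M^{-1}$ preserves the square-determinant property, which is immediate from the multiplicativity of the determinant and the fact that the squares in $\mathbb{F}_q^\times$ form a subgroup.
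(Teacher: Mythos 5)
Your proposal is correct and follows exactly the paper's own argument: pass to $\N = M^{-1}\M$, note that it contains the identity and inherits the square-determinant property from the multiplicativity of the determinant, and then invoke Corollary \ref{characteristic_polynomial_a_square}. The only difference is that you spell out the verification that $\det(M)^{-1}\det(X)$ is a square, which the paper leaves implicit.
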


\begin{proof}
Consider the subspace $\N=M^{-1}\M$. It contains the identity element and all its elements have square determinant, by the multiplicative
property of determinants. This corollary then follows from Corollary \ref{characteristic_polynomial_a_square}.
\end{proof}

It is well known that over any field $K$, the determinant of 
a $2m\times 2m$ skew-symmetric matrix with entries in $K$ is a square in $K$. The determinantal polynomial of the space of skew-symmetric
matrices is the square of a polynomial, known as the Pfaffian. The space of $2m\times 2m$ skew-symmetric does not contain the identity matrix
and it is not necessarily true that the characteristic polynomial of a skew-symmetric is the square of a polynomial. It is true
that if $M$ and $N$ are $2m\times 2m$ skew-symmetric matrices over $K$, with $\det M\neq 0$, then the characteristic polynomial of
 $M^{-1}N$ is a square of a polynomial in $K$. This can be proved by using the Pfaffian, along the lines of Corollary \ref{no_identity_but_subspace_of_squares}, but it can also be proved by elementary linear algebra, without invoking the Pfaffian.
 
 \section{Estimates related to rank in subspaces of matrices}

\noindent The determinantal polynomial that we have investigated in the previous sections can really only have any use if we are examining
subspaces of square matrices in which there are elements of non-zero determinant. Nonetheless, it is reasonable to try to investigate
such properties as the number of elements of maximal rank in an arbitrary subspace and in this case, polynomials related to minors
can fill the gap caused by the lack of useful determinantal polynomial, as we shall explain in this section.

Given positive integers $m$ and $n$, with $m\leq n$, let $M_{m\times n}(\mathbb{F}_q)$
denote the space of $m\times n$ matrices with entries in $\mathbb{F}_q$. When $m=n$, we will continue to use the notation $M_{n}(\mathbb{F}_q)$ in place of $M_{n\times n}(\mathbb{F}_q)$.
While the rank of a given matrix may be found practically by row operations, for theoretical purposes the calculation of minors is  more important.

Suppose that $\M$ is a subspace of $M_{m\times n}(\mathbb{F}_q)$ and $r\leq m$ is the maximum of $\rank A$, as $A$  ranges over the elements of $\M$. If $q$ is large enough compared with $r$, we would expect almost all elements of $\M$ to have rank $r$. We make this statement precise in our next two result, which are presumably well known in some form, 
although we do not know a specific reference. We begin with $n\times n$ matrices of rank $n$.

\begin{theorem} \label{number_of_elements_of_given_rank_square_case}
Let $\M$ be a subspace of $M_{n}(\mathbb{F}_q)$ of dimension $d$. Suppose that $\M$ contains an element of rank $n$. Then if $\M_n$ denotes the subset of elements of $\M$ of rank $n$ (equivalently, the elements of non-zero determinant),
\[
|\M_n|\geq q^d-nq^{d-1}+n-1.
\]
\end{theorem}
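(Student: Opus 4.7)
The plan is to count $\M_n$ by its complement, the set of zeros of the determinantal polynomial, and apply the homogeneous version of Ore's bound already used in Theorem \ref{field_size}.

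First I would pick a basis $M_1,\ldots,M_d$ of $\M$ and form $P_{\M}(x_1,\ldots,x_d)=\det(x_1M_1+\cdots+x_dM_d)$. Because $\M$ contains an element of rank $n$, by definition $P_{\M}$ is not the zero polynomial, and it is homogeneous of degree $n$. Under the identification of $\M$ with $\mathbb{F}_q^d$ via this basis, $\M_n$ corresponds exactly to the non-vanishing set of $P_{\M}$, so
\[
|\M_n| = q^d - Z,
\]
where $Z$ is the total number of zeros of $P_{\M}$ in $\mathbb{F}_q^d$.

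Next I would bound $Z$ using the sharper bound for non-trivial zeros of a non-zero homogeneous polynomial that was already invoked in the proof of Theorem \ref{field_size}, namely Theorem 6.15 of \cite{LN}: a non-zero homogeneous polynomial of degree $n$ in $d$ variables over $\mathbb{F}_q$ has at most $n(q^{d-1}-1)$ non-trivial zeros in $\mathbb{F}_q^d$. Adding the trivial zero $(0,\ldots,0)$ gives
\[
Z \leq 1 + n(q^{d-1}-1) = nq^{d-1} - n + 1.
\]

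Substituting into the identity above yields
\[
|\M_n| \geq q^d - nq^{d-1} + n - 1,
\]
which is the claimed inequality. There is no real obstacle here: the argument is a one-line application of the homogeneous Ore/Serre-type bound, made possible by the fact that $P_{\M}$ is automatically non-zero under the hypothesis that $\M$ contains an element of full rank. The only thing worth pausing on is to verify that the hypothesis of Theorem 6.15 of \cite{LN} is met — namely that $P_{\M}$ is a non-zero homogeneous polynomial — and both facts are immediate from the definition of the determinantal polynomial.
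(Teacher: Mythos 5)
Your proof is correct and follows essentially the same route as the paper: form the determinantal polynomial with respect to a basis, note it is non-zero and homogeneous of degree $n$ because $\M$ contains a full-rank element, and apply the homogeneous Ore bound (Theorem 6.15 of \cite{LN}) to get at most $n(q^{d-1}-1)+1$ zeros, hence the stated lower bound on $|\M_n|$. Nothing is missing.
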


\begin{proof}

Let $M_1$, \dots, $M_d$ be a basis of $\M$ and let the determinantal polynomial be computed with respect to this basis.
$\M_n$ is equal to the number of non-zeros of $P_{\M}$.
Now a theorem of Ore, \cite{LN}, 6.15, implies that the number of zeros of $P_{\M}$ in $\mathbb{F}_q^d$ is at most $n(q^{d-1}-1)+1$, and thus the lower bound for $|\M_n|$ follows.
\end{proof}

Note that when $d=1$, we have $|\M_n|=q-1$, and the inequality is replaced by
 a trivial equality.

This estimate has no content if $q\leq n$ and we cannot expect useful inequalities
of this kind when we work over small fields. When $q\geq 2n$, we can certainly assert
that $|\M_n|\geq |\M|/2$, and thus most elements of $\M$ are invertible in this case.

If we take $\M$  to be the subspace of all diagonal matrices in $M_{n}(\mathbb{F}_q)$,
the number of elements of rank $n$ in $\M$ is $(q-1)^n$. Theorem \ref{number_of_elements_of_given_rank_square_case} gives the estimate
 that $|\M_n|$ is at least $q^n-n(q^{n-1}-1)-1$, which  seems to be a reasonable approximation for large $q$. In the worst case, when $q=2$, $\M$ contains exactly one element of rank $n$, and thus the proportion of elements of rank $n$ in this subspace is vanishingly small as $n$ becomes large. 

Serre, \cite{Se}, has improved Ore's upper bound, \cite{LN}, Theorem 6.15, for the number of zeros in the homogeneous case. This increases our lower bound for the number of elements of rank  $n$ by a term $nq^{d-2}$. 

\begin{theorem} \label{number_of_elements_of_given_rank_rectangular_case}
Let $\M$ be a subspace of $M_{m\times n}(\mathbb{F}_q)$ of dimension $d$. Suppose that $\M$ contains an element of rank $r$. Then if $\M_r$ denotes the subset of elements of $\M$ of rank 
at least $r$, we have
\[
|\M_r|\geq q^d-rq^{d-1}+r-1.
\]
\end{theorem}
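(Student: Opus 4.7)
The plan is to imitate the proof of Theorem \ref{number_of_elements_of_given_rank_square_case}, substituting a well-chosen $r\times r$ minor polynomial for the determinantal polynomial, which is unavailable in the rectangular setting.

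Fix a basis $M_1,\dots,M_d$ of $\M$ and introduce indeterminates $x_1,\dots,x_d$. Let $A\in\M$ be an element of rank $r$ guaranteed by the hypothesis. Since $\rank A=r$, there is some $r$-element subset $I\subseteq\{1,\dots,m\}$ of rows and $J\subseteq\{1,\dots,n\}$ of columns such that the $r\times r$ submatrix $A[I,J]$ has non-zero determinant. Define
\[
f(x_1,\ldots,x_d)\,=\,\det\bigl((x_1M_1+\cdots+x_dM_d)[I,J]\bigr).
\]
Then $f$ is a homogeneous polynomial of degree $r$ in $d$ variables, and $f$ is not the zero polynomial, since the values $(\lambda_1,\dots,\lambda_d)$ of the coordinates of $A$ in the chosen basis give a non-zero evaluation.

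The key observation is that the map $(\lambda_1,\dots,\lambda_d)\mapsto \lambda_1M_1+\cdots+\lambda_dM_d$ is a bijection between $\mathbb{F}_q^d$ and $\M$, and that whenever $f(\lambda_1,\dots,\lambda_d)\neq 0$, the corresponding matrix in $\M$ contains a non-vanishing $r\times r$ minor and hence has rank at least $r$. Consequently $|\M_r|$ is bounded below by the number of non-zeros of $f$ in $\mathbb{F}_q^d$.

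To estimate this, I appeal to the theorem of Ore used already in the proof of Theorem \ref{field_size} and Theorem \ref{number_of_elements_of_given_rank_square_case}: a non-zero homogeneous polynomial of degree $r$ in $d$ variables over $\mathbb{F}_q$ has at most $r(q^{d-1}-1)$ non-trivial zeros, and therefore at most $r(q^{d-1}-1)+1$ zeros in total in $\mathbb{F}_q^d$. Thus $f$ has at least $q^d-r(q^{d-1}-1)-1=q^d-rq^{d-1}+r-1$ non-zeros, and the inequality
\[
|\M_r|\;\geq\; q^d-rq^{d-1}+r-1
\]
follows. There is no substantial obstacle here: the only delicate point is choosing a single minor polynomial whose non-vanishing certifies rank at least $r$, which is handled by fixing the particular pair $(I,J)$ supplied by the rank-$r$ element $A$.
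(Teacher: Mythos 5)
Your proof is correct, but it takes a more direct route than the paper. The paper first normalizes the rank-$r$ element $S$ to $\left(\begin{smallmatrix} I_r & 0\\ 0 & 0\end{smallmatrix}\right)$ by an equivalence $Z\mapsto AZB$, then projects onto the top-left $r\times r$ block via a linear map $\theta:\M\to M_r(\mathbb{F}_q)$, applies Theorem \ref{number_of_elements_of_given_rank_square_case} to the image subspace $\theta(\M)$ of dimension $e$, and finishes by counting the $q^{d-e}$-element fibres of $\theta$. You instead skip the reduction entirely: you fix one $r\times r$ minor that is non-zero at the given rank-$r$ element, observe that the corresponding minor polynomial $f$ is homogeneous of degree $r$, non-zero, and that its non-vanishing certifies rank at least $r$, and then apply Ore's bound (the same bound, used in the same form as in Theorems \ref{field_size} and \ref{number_of_elements_of_given_rank_square_case}) directly in $d$ variables. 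Both arguments rest on the same tool and give the same inequality; yours is shorter and avoids the normalization and fibre-counting. What the paper's detour buys is the remark following its proof: since the count there comes out as $q^d-rq^{d-1}+(r-1)q^{d-e}$, the bound improves whenever $d>e$ (guaranteed, for instance, if $d>r^2$), a refinement that your one-step argument does not exhibit as written, though it could be recovered by noting that your $f$ depends only on the composite of the coordinate map with $\theta$.
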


\begin{proof}
Let $S$ be an element of rank $r$ in $\M$. We know then from the elementary theory of matrices that there exist invertible $m\times m$ and $n\times n$ matrices, $A$, $B$, respectively, such that
\[
ASB=\left(
\begin{array}
{cc}
        I_r&0\\
        0&0   
\end{array}
\right),
\]
where $I_r$ denotes the $r\times r$ identity matrix. 

We may replace the given subspace $\M$ by the subspace $\N=A\M B$, if necessary, and then we may assume that 
\[
S=\left(
\begin{array}
{cc}
        I_r&0\\
        0&0   
\end{array}
\right)
\]
is in $\N$. Since the one-to-one linear transformation $Z\to AZB$ from $\M$ into $\N$
preserves rank, we do not change any rank properties in exchanging $\M$ for $\N$. 

Given any matrix $Z$ in $\N$, let $Z^*$ denote the $r\times r$ matrix obtained from $Z$ by taking its first $r$ rows and columns and let $\theta:\M\to M_{r}(\mathbb{F}_q)$ be the linear transformation given by $\theta(Z)=Z^*$. Let $e=\dim \theta(\M)$. We note that
$\theta(S)=I_r\in \theta(\M)$. Thus, if we let $\A$ denote the subset of elements of 
rank $r$ in $\theta(\M)$, we have
\[
|\A|\geq q^e-rq^{e-1}+r-1
\]
by Theorem \ref{number_of_elements_of_given_rank_square_case}. 

Finally, any element $Z$ in $\M$ such that $\theta(Z)\in\A$ is in $\M_r$, since
$\det Z^*$ is then a non-zero minor of order $r$. It follows that
\[
|\M_r|\geq q^{d-e}|\A|=q^d-rq^{d-1}+(r-1)q^{d-e}\geq q^d-rq^{d-1}+r-1,
\]
as required.
\end{proof}

The proof shows that our lower bound for $|\M_r|$ can be improved if we know that
$d>e$. Since $e\leq r^2$, this is guaranteed if $d>r^2$.

There is a version of Theorem \ref{number_of_elements_of_given_rank_square_case} that is valid for skew-symmetric matrices and gives improved bounds. In this connection, we recall the convention that in characteristic two, a skew-symmetric matrix is a symmetric matrix whose diagonal entries are all 0.
We also recall that the rank of a skew-symmetric matrix is necessarily even.

\begin{theorem} \label{number_of_invertible_skew_symmetric_elements}
Let $\M$ be a subspace of $M_{2m}(\mathbb{F}_q)$ of dimension $d$. Suppose that $\M$ 
consists of skew-symmetric matrices and contains an element of rank $2m$. Then if $\M_{2m}$ denotes the subset of elements of $\M$ of rank $2m$, we have
\[
|\M_{2m}|\geq q^d-mq^{d-1}+m-1.
\]
\end{theorem}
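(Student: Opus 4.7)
The plan is to exploit the Pfaffian: for a $2m \times 2m$ skew-symmetric matrix $A$ over any field, $\det A$ is the square of a polynomial in the entries of $A$, namely the Pfaffian $\mathrm{Pf}(A)$, which is a homogeneous polynomial of degree $m$ in the entries. This replaces the degree-$2m$ determinantal polynomial with a polynomial of degree only $m$, and Ore's theorem applied to this lower-degree polynomial will give the improved bound.

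First I would fix a basis $M_1, \dots, M_d$ of $\M$ and form the generic skew-symmetric matrix $x_1 M_1 + \cdots + x_d M_d$, then define the Pfaffian polynomial of $\M$ by
\[
Q_{\M}(x_1,\dots,x_d) \;=\; \mathrm{Pf}(x_1 M_1 + \cdots + x_d M_d).
\]
This is a homogeneous polynomial of degree $m$ in $\mathbb{F}_q[x_1,\ldots,x_d]$, and by the fundamental identity $\mathrm{Pf}^2 = \det$ for skew-symmetric matrices, it satisfies $Q_{\M}^2 = P_{\M}$. Since $\M$ contains an element of rank $2m$, that element has non-zero determinant and hence non-zero Pfaffian, so $Q_{\M}$ is not the zero polynomial.

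Next I would observe that a skew-symmetric matrix has rank $2m$ if and only if its determinant is non-zero, equivalently its Pfaffian is non-zero. Hence the complement of $\M_{2m}$ in $\M$ corresponds precisely to the set of zeros of $Q_{\M}$ in $\mathbb{F}_q^d$. Applying Ore's theorem (\cite{LN}, Theorem 6.15) to $Q_{\M}$, which is a non-zero polynomial of degree $m$ in $d$ variables over $\mathbb{F}_q$, yields the upper bound
\[
|\{(\lambda_1,\dots,\lambda_d) \in \mathbb{F}_q^d : Q_{\M}(\lambda_1,\dots,\lambda_d)=0\}| \;\leq\; m(q^{d-1}-1)+1.
\]
Subtracting this from $q^d = |\M|$ gives the desired inequality
\[
|\M_{2m}| \;\geq\; q^d - m(q^{d-1}-1) - 1 \;=\; q^d - mq^{d-1} + m - 1.
\]

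There is no real obstacle here; the only thing one has to be comfortable with is the existence and degree of the Pfaffian as a polynomial in the entries (which in characteristic two requires the convention, already noted in the paper, that the diagonal of a skew-symmetric matrix is zero so that the Pfaffian is defined). The proof mirrors that of Theorem \ref{number_of_elements_of_given_rank_square_case} line for line, except that the Pfaffian lets one apply Ore's bound to a polynomial of degree $m$ rather than $2m$, which is precisely what produces the improved estimate.
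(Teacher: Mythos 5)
Your proposal is correct and follows essentially the same route as the paper: both proofs observe that the determinantal polynomial of the skew-symmetric subspace is the square of a homogeneous degree-$m$ polynomial (the Pfaffian of the generic element) and then apply Ore's bound to that degree-$m$ factor. No issues.
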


\begin{proof}
The proof is almost identical with that of Theorem \ref{number_of_elements_of_given_rank_square_case}.
Let $S_1$, \dots, $S_d$ be a basis of $\M$ and let $x_1$, \dots, $x_d$ be independent indeterminates over $\mathbb{F}_q$. We set
\[
f(x_1,\ldots,x_d)=\det(x_1S_1+\cdots +x_dS_d).
\]
Since we are dealing with skew-symmetric matrices, the theory of the Pfaffian shows that $f$ is the square of a non-zero homogeneous polynomial, $g$, say, of degree $m$ in $d$ indeterminates. 
The theorem of Ore, already quoted, implies that the number of zeros of $g$ in $\mathbb{F}_q^d$ is at most $m(q^{d-1}-1)+1$, and thus the lower bound for $|\M_{2m}|$ follows.
\end{proof}

Extension of this theorem to deal with the number of elements of rank at least some specified
even integer in a subspace of skew-symmetric matrices is straightforward, as we now show.

\begin{theorem} \label{number_of_skew_symmetric_elements_of_given_rank_or_more}
Let $\M$ be a $d$-dimensional subspace of $n\times n$ skew-symmetric matrices over $\mathbb{F}_q$. Suppose that $\M$ contains an element of rank $2r$. Then if $\M_{2r}$ denotes the subset of elements of $\M$ of rank 
at least $2r$, we have
\[
|\M_{2r}|\geq q^d-rq^{d-1}+r-1.
\]
\end{theorem}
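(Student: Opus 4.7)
The plan is to mimic the reduction used in the proof of Theorem \ref{number_of_elements_of_given_rank_rectangular_case}, but with left-right equivalence $Z\mapsto AZB$ replaced by congruence $Z\mapsto AZA^T$, which is the natural action that preserves both skew-symmetry and rank. The square-case input will then be Theorem \ref{number_of_invertible_skew_symmetric_elements} rather than Theorem \ref{number_of_elements_of_given_rank_square_case}.

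First, fix an element $S\in\M$ of rank $2r$. By the classification of alternating forms over a field, there exists $A\in GL(n,\mathbb{F}_q)$ such that $ASA^T$ is block-diagonal with a non-singular standard $2r\times 2r$ skew-symmetric block in the upper-left and zeros elsewhere. Replacing $\M$ by $\N=A\M A^T$, we may assume $\N$ contains such a distinguished element $S_0$; this replacement preserves dimension and the rank distribution, and $\N$ still consists entirely of skew-symmetric matrices.

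Next, for $Z\in\N$ let $Z^\ast$ denote its upper-left $2r\times 2r$ block, and define the $\mathbb{F}_q$-linear map $\theta:\N\to M_{2r}(\mathbb{F}_q)$ by $\theta(Z)=Z^\ast$. Since $Z$ is skew-symmetric, so is $Z^\ast$, so $\theta(\N)$ lies in the space of $2r\times 2r$ skew-symmetric matrices. Set $e=\dim\theta(\N)$; by construction $\theta(S_0)=S_0^\ast$ has rank $2r$, so Theorem \ref{number_of_invertible_skew_symmetric_elements} applied to $\theta(\N)$ (with the roles of $m$ and $d$ played by $r$ and $e$) yields
\[
|\A|\geq q^e-rq^{e-1}+r-1,
\]
where $\A$ is the set of rank-$2r$ elements of $\theta(\N)$.

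Finally, any $Z\in\N$ with $\theta(Z)\in\A$ has a non-vanishing $2r\times 2r$ minor and therefore rank at least $2r$. The fibres of $\theta$ have size $q^{d-e}$, so
\[
|\M_{2r}|=|\N_{2r}|\geq q^{d-e}|\A|\geq q^d-rq^{d-1}+(r-1)q^{d-e}\geq q^d-rq^{d-1}+r-1,
\]
as required. There is no serious obstacle: the proof is essentially bookkeeping, the only points to verify being that congruence preserves skew-symmetry (immediate) and that restriction to a principal $2r\times 2r$ submatrix preserves skew-symmetry (equally immediate). The substantive content, namely the Ore-type bound via the Pfaffian, has already been packaged into Theorem \ref{number_of_invertible_skew_symmetric_elements}.
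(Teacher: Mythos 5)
Your proposal is correct and follows essentially the same route as the paper: congruence $Z\mapsto XZX^T$ to normalize the rank-$2r$ element, projection $\theta$ onto the leading $2r\times 2r$ block, an application of Theorem \ref{number_of_invertible_skew_symmetric_elements} to $\theta(\M)$, and the same fibre-counting inequality $|\M_{2r}|\geq q^{d-e}|\A|\geq q^d-rq^{d-1}+r-1$. No discrepancies to report.
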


\begin{proof}
Let $S$ be an element of rank $2r$ in $\M$. We know then from the elementary theory of 
skew-symmetric matrices that there exists an invertible  $n\times n$ matrix, $X$, say, such that
\[
XSX^T=\left(
\begin{array}
{cc}
        Z&0\\
        0&0   
\end{array}
\right),
\]
where $Z$ denotes an invertible  $2r\times 2r$ skew-symmetric matrix, and $X^T$ denotes the transpose of $X$.

We may replace the given subspace $\M$ by the subspace $\N=X\M X^T$, if necessary, and then we may assume that 
\[
S=\left(
\begin{array}
{cc}
        Z&0\\
        0&0   
\end{array}
\right)
\]
is in $\N$.  Again, we do not change any rank properties in exchanging $\M$ for $\N$. 

Given any matrix $A$ in $\N$, let $A^*$ denote the $2r\times 2r$ matrix obtained from $A$ by taking its first $2r$ rows and columns and let $\theta:\M\to M_{2r}(\mathbb{F}_q)$ be the linear transformation given by $\theta(A)=A^*$. Let $e=\dim \theta(\M)$. We note that $\theta(\M)$ is a subspace of skew-symmetric matrices and 
$\theta(S)$ is an invertible element in $\theta(\M)$. Thus, if we let $\A$ denote the subset of elements of 
rank $2r$ in $\theta(\M)$, we have
\[
|\A|\geq q^e-rq^{e-1}+r-1
\]
by Theorem \ref{number_of_invertible_skew_symmetric_elements}.  

Finally, any element $B$ in $\M$ such that $\theta(B)\in\A$ is in $\M_{2r}$, since
$\det B^*$ is then a non-zero minor of order $2r$. It follows that
\[
|\M_{2r}|\geq q^{d-e}|\A|=q^d-rq^{d-1}+(r-1)q^{d-e}\geq q^d-rq^{d-1}+r-1,
\]
as required.
\end{proof}

\end{document}